\newcommand{\bB}{\mathbb{B}}
\newcommand{\bC}{\mathbb{C}}
\newcommand{\bF}{\mathbb{F}}
\newcommand{\bM}{\mathbb{M}}
\newcommand{\bQ}{\mathbb{Q}}
\newcommand{\bR}{\mathbb{R}}
\newcommand{\bZ}{\mathbb{Z}}
\newcommand{\unit}{\mathbf{1}}
\newcommand{\cC}{\mathcal{C}}
\newcommand{\cW}{\mathcal{W}}
\newcommand{\fgl}{\mathfrak{gl}}
\newcommand{\fh}{\mathfrak{h}}
\newcommand{\fH}{\mathfrak{H}}
\newcommand{\fsl}{\mathfrak{sl}}
\DeclareMathOperator{\Aut}{Aut}
\DeclareMathOperator{\End}{End}
\DeclareMathOperator{\Hom}{Hom}
\DeclareMathOperator{\id}{id}
\DeclareMathOperator{\Res}{Res}
\DeclareMathOperator{\Spin}{Spin}
\DeclareMathOperator{\Sym}{Sym}
\DeclareMathOperator{\Tr}{Tr}
\newcommand{\Ind}{\operatorname{Ind}}
\newtheorem{thm}{Theorem}[section]
\newtheorem{prop}[thm]{Proposition}
\newtheorem{lem}[thm]{Lemma}
\newtheorem{cor}[thm]{Corollary}
\theoremstyle{definition}
\newtheorem{defn}[thm]{Definition}
\theoremstyle{remark}
\newtheorem{rem}[thm]{Remark}
\begin{document}

\title{Why do the symmetries of the monster vertex algebra form a finite simple group?}
\author{Scott Carnahan}

\begin{abstract}
Together with the 1988 construction of the monster vertex algebra $V^\natural$, Frenkel, Lepowsky, and Meurman showed that the largest sporadic simple group, known as the Fischer-Griess monster, forms the symmetry group of an infinite dimensional algebraic object whose construction was motivated by theoretical physics.  However, the fact that the symmetry group is in fact finite and simple ultimately relied on highly non-trivial group-theoretic results used in Griess's work on the monster.  We prove some properties of the automorphism group of $V^\natural$, most notably that it is is finite and simple, using recent developments in the theory of vertex operator algebras, but mostly 19th century group theory.
\end{abstract}

\maketitle

\tableofcontents

\section{Introduction}

Let us imagine a world much like our own, but where finite group theory developed much more slowly than 2-dimensional conformal field theory and the theory of vertex algebras \footnote{Alternatively, we may imagine a mathematician who, like the author, is reasonably familiar with recent developments in vertex algebras but is somewhat unclear with details about the classification of finite simple groups.}.  We might find that many finite simple groups would be discovered not as part of a classification program, but instead appear naturally from the symmetries of vertex algebras that are built with standard techniques.  For example, most of (and possibly all of) the groups of Lie type could be found as ``small index subquotients'' in the automorphism groups of easily constructed vertex algebras over finite fields \cite{GL14}, and we would find the monster simple group and its sporadic subquotients naturally through the study of orbifolds of rank 24 lattice vertex algebras.  

In this paper, we describe the fundamentals of the construction of the monster vertex algebra $V^\natural$ from \cite{FLM88}, and prove many key properties of its automorphism group, in particular that it is finite and simple, using recent results in vertex algebra theory but very little finite group theory or explicit calculation.  That is, we will play a game, where we wrestle with an unwieldy opponent ($\Aut V^\natural$) with our strongest hand (20th century finite group theory) tied behind our back.  Thus, our main group-theoretic tools for proving finiteness and simplicity are Sylow's theorem and the Frattini argument, both from the late 19th century, and we also use a basic fact from algebraic geometry, namely that a complex algebraic group with trivial Lie algebra is finite.  More precisely, our use of post-19th-century finite group theory is limited to properties of double covers of lattices, the Golay code, and the Leech lattice.

Our main result, that the automorphism group of the monster vertex algebra $V^\natural$ is finite and simple, is clearly not new: it was first proved in \cite{FLM88}, some pages after the construction of $V^\natural$ itself.  However, the original proof relies on an identification with the monster simple group $\bM$, constructed in \cite{G82}, where the fact that $\bM$ is finite and simple was proved using a lot of the machinery of the classification of finite simple groups.  This path to proof has had some simplifications made: a simpler proof that $\bM$ is finite and simple is sketched in \cite{T84}, and a simplified construction of $\bM$ is given in \cite{C85}.  Furthermore, there are now many alternative constructions of $V^\natural$, e.g., in \cite{M97}, \cite{CLS16}, \cite{ALY17}, \cite{C17}.  However, until now, these simplifications have not been substantially combined.

We take ideas from several treatments of both the monster and $V^\natural$.  The construction of $V^\natural$ is fundamentally the same as in the original work \cite{FLM88}, but the analysis of the vertex algebra structure and symmetry is simplified by recent advances in the theory of holomorphic orbifolds.  We replace a brutally explicit treatment of triality with a more intrinsically symmetric method inspired by Conway's construction of the monster in \cite{C85}, but we avoid use of finite nonassociative structures like the Parker loop.  For the proofs of finiteness and simplicity, we originally used the outline from \cite{T84}, but we found that the vertex-algebraic description makes our proofs a small fraction of the original length, and in particular allowed us to avoid the explicit computations.  We conclude by describing some additional facts about $\Aut V^\natural$ that are accessible with vertex algebraic techniques.

It is interesting to compare our results with those in \cite{N01}, where structural features of the monster were worked out assuming the generalized moonshine conjecture, but not using vertex algebras.  We are able to derive most of the results given by Norton, for example the sharp upper bounds for the exponents of various primes in the order of the monster, but we are forced to use slightly different techniques.  This is because Norton assumes a particularly strong form of generalized moonshine that has not been proved \cite{C12, CM16}.

One notable omission from this paper is the rich theory of Virasoro minimal models as a way to do explicit calculations.  There is significant work in the analysis of $V^\natural$ and the monster using these tools, in particular the existence of a vertex operator subalgebra of the form $L(1/2,0)^{\otimes 48}$ (\cite{DMZ94}) and consequent results concerning symmetries (e.g., \cite{M97}, \cite{S06}, \cite{S10}, \cite{GL11}) and constructions employing codes (e.g., \cite{DGH97}, \cite{M96} for the early theory and \cite{LYY04}, \cite{LYY05} for later developments).  

\subsection{Acknowledgements}

I would like to thank Jacob Lurie and Akshay Venkatesh for inviting me to talk at the IAS moonshine seminar in March 2021.  My preparations for the talk directly led to the writing of this paper.  I would also like to thank John Duncan and Theo Johnson-Freyd for helpful comments during the talk that led to improvements in the exposition.  Finally, I thank Hiroki Shimakura and Naoki Chigira for helpful comments and corrections on an earlier version of this paper.

\section{Two Niemeier lattices}

Before we begin our treatment of vertex algebras, we need to briefly consider lattices.

\begin{defn}
A \textbf{lattice} is a free abelian group of finite rank equipped with an integer-valued bilinear form.  A lattice $L$ is \textbf{even} if $(a,a) \in 2\bZ$ for all $a \in \bZ$, \textbf{nondegenerate} if $(a,b) = 0$ for all $a \in L$ implies $b = 0$, and \textbf{positive definite} if $(a,a) \leq 0$ implies $a=0$.  The \textbf{dual lattice} of a nondegenerate lattice $L$ is the set of all $b \in L \otimes \bQ$ such that $(a,b) \in \bZ$ for all $a \in L$.  A lattice $L$ is \textbf{unimodular} if it is equal to its dual lattice.  A \textbf{root} in a positive definite even lattice is a vector $a$ of norm 2, i.e., $(a,a) = 2$.
\end{defn}

The first nonzero example of an even positive definite lattice is $\sqrt{2}\bZ$, also known as the $A_1$ root lattice, generated by a vector $a$ satisfying $(a,a) = 2$.  This is not unimodular, since the dual lattice is $\frac{1}{\sqrt{2}}\bZ$, which contains $\sqrt{2}\bZ$ as an index 2 subgroup.

It is well-known that the rank of any positive definite even unimodular lattice is an integer multiple of 8.  The first nontrivial example is the $E_8$ lattice, in rank 8.  There are 2 isomorphism types in rank 16, 24 in rank 24 (classified by Niemeier in \cite{N73}), and more than $10^9$ in rank 32 \cite{K00}.  One can see that there is a phase transition around rank 24, beyond which lattices become difficult to classify.

A standard invariant of lattices is given by the generating function that enumerates vectors of a given length:

\begin{defn}
Let $L$ be a positive definite even lattice.  Then, the theta function of $L$ is:
\[ \theta_L(\tau) = \sum_{a \in L} q^{(a,a)/2} \]
where $\tau$ describes a point in the complex upper half-plane, and $q = e^{2\pi i \tau}$.
\end{defn}

We note that this sum converges normally to a holomorphic function on the upper half-plane, because there is a degree $d$ polynomial in $n$ that bounds the coefficient of $q^n$ for all $n$.  A short exercise in Poisson summation yields:

\begin{lem}
If $L$ is a positive definite even unimodular lattice of rank $d$, then $\theta_L$ is a modular form of weight $\frac{d}{2}$.  In particular, $\theta_L(\frac{-1}{\tau}) = \tau^{d/2} \theta_L(\tau)$.
\end{lem}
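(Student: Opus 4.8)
The plan is to verify the two defining transformation laws of a modular form directly, exploiting the fact that $\theta_L$ is a sum of Gaussians. Writing $q^{(a,a)/2} = e^{\pi i \tau (a,a)}$, we have $\theta_L(\tau) = \sum_{a \in L} f_\tau(a)$ with $f_\tau(v) = e^{\pi i \tau (v,v)}$, which is a Schwartz function on the real vector space $V = L \otimes \bR$ as long as $\tau$ lies in the upper half-plane, since then the Gaussian decays. The invariance $\theta_L(\tau + 1) = \theta_L(\tau)$ is immediate: replacing $\tau$ by $\tau + 1$ multiplies the $a$-th term by $e^{\pi i (a,a)}$, which equals $1$ precisely because $L$ is even. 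Since $\tau \mapsto \tau + 1$ and $\tau \mapsto -1/\tau$ generate $SL_2(\bZ)$ and the former acts with trivial automorphy factor in any weight, everything reduces to the inversion formula $\theta_L(-1/\tau) = \tau^{d/2}\theta_L(\tau)$.

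For the inversion step I would apply the Poisson summation formula $\sum_{a \in L} g(a) = \frac{1}{\operatorname{covol}(L)} \sum_{b \in L^*} \hat g(b)$, where the Fourier transform $\hat g(w) = \int_V g(v) e^{-2\pi i (v,w)}\, dv$ is taken with respect to the inner product, and $L^*$ is the dual lattice. Because $L$ is unimodular, $\operatorname{covol}(L) = 1$ and $L^* = L$, so the right-hand side is again a sum over $L$ with no extra constant from the covolume. The only computation is the Fourier transform of $f_\tau$: choosing an orthonormal basis of $V$ turns $(v,v)$ into the standard positive definite form and $f_\tau$ into a product of one-dimensional Gaussians, reducing to the classical identity $\int_\bR e^{\pi i \tau t^2} e^{-2\pi i st}\, dt = (-i\tau)^{-1/2} e^{\pi i (-1/\tau) s^2}$, with the principal branch of the square root (well defined since $-i\tau$ has positive real part on the upper half-plane). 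Taking the $d$-fold product gives $\hat f_\tau(w) = (-i\tau)^{-d/2} f_{-1/\tau}(w)$, and Poisson summation then yields $\theta_L(\tau) = (-i\tau)^{-d/2}\theta_L(-1/\tau)$, that is, $\theta_L(-1/\tau) = (-i\tau)^{d/2}\theta_L(\tau)$.

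To match the stated formula I would remove the extra constant using the fact, recalled above, that the rank $d$ of a positive definite even unimodular lattice is divisible by $8$. Writing $d = 8k$ gives $d/2 = 4k$ and $(-i)^{d/2} = ((-i)^4)^k = 1$, so $(-i\tau)^{d/2} = \tau^{d/2}$ and the inversion law becomes exactly $\theta_L(-1/\tau) = \tau^{d/2}\theta_L(\tau)$. Together with the $T$-invariance this establishes the weight-$d/2$ transformation law under all of $SL_2(\bZ)$ with trivial multiplier, noting that $d/2$ is an even integer so no multiplier system intervenes. Finally, $\theta_L$ is holomorphic on the upper half-plane by the convergence already noted, and its $q$-expansion involves only non-negative integer powers of $q$ with constant term $1$ coming from $a = 0$, so it is holomorphic at the cusp; hence $\theta_L$ is a holomorphic modular form of weight $d/2$.

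The only genuinely delicate point is the bookkeeping in the Gaussian Fourier transform, namely the correct normalizing constant and, crucially, the correct branch of $(-i\tau)^{1/2}$, together with checking that $f_\tau$ decays rapidly enough to apply Poisson summation termwise. Both are standard, but they must be handled carefully to pin down the precise power of $\tau$; everything else is formal.
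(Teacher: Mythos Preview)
Your proof is correct and follows exactly the approach the paper indicates: the paper does not give a detailed argument but simply states that the lemma is ``a short exercise in Poisson summation,'' and your write-up supplies precisely that exercise, including the use of $8 \mid d$ to remove the factor $(-i)^{d/2}$.
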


In order to study the monster, we consider two rank 24 even unimodular lattices, called Niemeier lattices: one called $N(A_1^{24})$, because its root lattice is a product of 24 copies of the $A_1$ lattice $\sqrt{2}\bZ$, and the Leech lattice $\Lambda$, which is distinguished by the property that it has no roots.

\subsection{The Golay code construction}

Let us suppose there is a positive definite even unimodular lattice $L$ of rank 24 with root system $A_1^{24}$.  Then, the sublattice $(\sqrt{2}\bZ)^{24}$ generated by roots has index $2^{24}$ in its dual lattice $(\frac{1}{\sqrt{2}}\bZ)^{24}$.  Thus, the unimodular condition implies $L$ contains the root lattice as an index $2^{12}$ subgroup, i.e., $L/(\sqrt{2}\bZ)^{24}$ is a subgroup $\cC$ of order $2^{12}$ in $(\frac{1}{\sqrt{2}}\bZ)^{24}/(\sqrt{2}\bZ)^{24} \cong (\bZ/2\bZ)^{24}$.  We may identify elements of this quotient group with subsets of the distinguished basis $\{e_i = (0,\ldots,0,\frac{1}{\sqrt{2}},0,\ldots,0)\}$ of $(\frac{1}{\sqrt{2}}\bZ)^{24}$, and we find that for $L$ to be even, it is necessary and sufficient that all elements of $\cC$ correspond to subsets whose size is a multiple of 4.  For $L$ to contain no additional roots beyond the $A_1^{24}$ root system, it is necessary and sufficient that $\cC$ have no elements corresponding to subsets of size 4.  In the language of codes, $\cC$ is a doubly even binary code of length 24 with minimal weight 8.  The following theorem then implies the lattice $L$ exists, and is uniquely determined up to isomorphism by the conditions we specified.

\begin{thm}
There exists a doubly even binary code $\cC$ of length 24 with minimal weight 8 and order $2^{12}$, and it is unique up to permutation of the basis.  The group of symmetries of this code is the sporadic simple group $M_{24}$. 
\end{thm}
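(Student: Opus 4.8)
The plan is to split the statement into three parts: an explicit construction establishing existence, a rigidity argument through the associated Steiner system establishing uniqueness, and an identification of the symmetry group with a classical object. For existence I would produce $\cC$ by hand rather than search for it abstractly. A convenient route is the binary quadratic-residue code of length $23$: over $\bF_2$ the polynomial $x^{23}-1$ factors as $(x-1)g(x)h(x)$ with $\deg g = \deg h = 11$, the two factors being interchanged by the nontrivial residue symmetry, and the cyclic code generated by $g$ has dimension $12$ and odd minimal weight $7$. Appending an overall parity-check coordinate yields a code of length $24$ and dimension $12$ in which every weight is even; a short direct check shows all weights are divisible by $4$ and that the minimal weight is $8$. (Equivalently one can build $\cC$ from the hexacode via the Miracle Octad Generator, which has the advantage of exhibiting a large symmetry group from the outset.) This settles that a code with the stated parameters exists.

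For uniqueness, let $\cC$ be \emph{any} doubly even binary code of length $24$, dimension $12$, and minimal weight $8$. Dimension $12 = 24/2$ together with double-evenness forces $\cC \subseteq \cC^\perp$ and hence $\cC = \cC^\perp$, i.e. $\cC$ is self-dual. The key preliminary step is to pin down the weight enumerator: by Gleason's theorem the weight enumerator of a doubly even self-dual code of length $24$ lies in the two-dimensional space of invariants spanned by $g_8^3$ and $g_{24}$, so it is determined by the single extra condition that there are no codewords of weight $4$. This forces exactly $759$ codewords of weight $8$, which I will call \emph{octads}. Next I would show the octads carry a Steiner system $S(5,8,24)$: self-duality and double-evenness imply that any two octads meet in an even number of points, and since $|u\cap v| = 8 - \tfrac{1}{2}\operatorname{wt}(u+v) \leq 4$, no two octads share $5$ coordinates; as $759\binom{8}{5} = \binom{24}{5}$, every $5$-subset of coordinates lies in exactly one octad. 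Finally, the octads generate $\cC$ — the all-ones word $\unit$ is a sum of three pairwise disjoint octads and the remaining weight classes are likewise spanned — so $\cC$ is recovered from its Steiner system, and uniqueness of $\cC$ up to a coordinate permutation follows from the classical uniqueness of $S(5,8,24)$.

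The symmetry group of $\cC$ is, by the previous step, exactly the automorphism group of the Steiner system $S(5,8,24)$, which is the Mathieu group $M_{24}$ by definition; its order $244823040 = 2^{10}\cdot 3^3\cdot 5\cdot 7\cdot 11\cdot 23$ and its simplicity are classical results of Mathieu and Witt. I expect the main obstacle to lie precisely here, in the two imported facts — the uniqueness of $S(5,8,24)$ and the simplicity of $M_{24}$ — which are genuinely nontrivial combinatorial and group-theoretic inputs rather than formal consequences of the code axioms. In keeping with the overall strategy of this paper these would be cited as black-box properties of the Golay code; the remaining steps (Gleason's theorem and the octad count) are elementary and self-contained.
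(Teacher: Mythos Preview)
Your outline is correct and in fact more detailed than what the paper itself provides. The paper does not prove this theorem: immediately after the statement it simply names the code, sketches the $PSL_2(\bF_{23})$-equivariant quadratic-residue construction (the same one you use for existence), and then defers to Aschbacher \cite{A94} for a Steiner-system treatment that also yields simplicity of $M_{24}$, and to \cite{CS82} and \cite{G98} for the hexacode construction. Indeed, the introduction explicitly declares properties of the Golay code to be among the few post-19th-century group-theoretic facts the paper imports as black boxes.

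Your route via Gleason's theorem, the octad count $759\binom{8}{5}=\binom{24}{5}$, and the resulting $S(5,8,24)$ is the standard one and is essentially what lies behind the cited references; you have correctly located the genuinely hard inputs (uniqueness of $S(5,8,24)$ and simplicity of $M_{24}$) and correctly propose to cite rather than reprove them, which matches the paper's stance exactly. One minor addition you might make explicit: the claim that the octads span $\cC$ deserves a line, since without it the passage from ``same Steiner system'' to ``same code'' is not quite complete; but this is easy (e.g.\ the weight enumerator already tells you the octads span a $12$-dimensional space).
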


This code is called the extended Golay code.  Perhaps the most concise construction of this code takes as a basis the set $\bZ/23\bZ \cup \{\infty\}$, and takes $\cC$ as the subgroup generated by the size 12 set $QR$ of quadratic residues modulo $23$ together with all linear fractional transforms of $QR$.  This construction makes a $PSL_2(\bF_{23})$ action clear, but many important properties are not so obvious.  There is an alternative construction in \cite{A94} that also yields a proof that $M_{24}$ is simple, using iterated extension of Steiner systems, starting from the projective plane over $\bF_4$.  A third construction, using the hexacode, is given in \cite{CS82} and \cite{G98}.

\begin{lem}
The theta function of $N(A_1^{24})$ is the unique weight 12 modular form with constant term $1$ and $q$-coefficient $48$: $\theta_{N(A_1^{24})}(\tau) = 1 + 48q + 195408q^2 + \cdots$.  In particular, there are 48 vectors of norm 2 and 195408 vectors of norm 4.
\end{lem}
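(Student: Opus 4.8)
The plan is to invoke the earlier lemma, that $\theta_L$ for a positive definite even unimodular lattice of rank $d$ is a modular form of weight $d/2$, here giving weight $12$, and then to pin $\theta_{N(A_1^{24})}$ down among all weight $12$ forms using only its first two Fourier coefficients. First I would recall the standard structural fact that the space $M_{12}$ of weight $12$ modular forms for $SL_2(\bZ)$ is two-dimensional, spanned by the cube $E_4^3$ of the weight $4$ Eisenstein series and the normalized cusp form $\Delta = q\prod_{n \geq 1}(1-q^n)^{24}$. The key observation is that the linear map $M_{12} \to \bC^2$ sending a form to its constant and $q$-coefficients is injective: a form with vanishing constant term is a cusp form, and the space of weight $12$ cusp forms is one-dimensional, spanned by $\Delta = q + O(q^2)$, whose $q$-coefficient is nonzero; hence a form killed by both coordinates vanishes. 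Since $\dim M_{12} = 2$, this map is an isomorphism, so a weight $12$ form is determined by its first two coefficients.

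Next I would read off those two coefficients directly from the lattice. The constant term counts vectors of norm $0$, namely the zero vector alone, giving $1$. The $q$-coefficient counts vectors of norm $2$, i.e.\ the roots; since $N(A_1^{24})$ has root system $A_1^{24}$ and each $A_1$ factor contributes exactly the two roots $\pm a$, there are $2 \cdot 24 = 48$ of them. Combined with the previous paragraph, this already identifies $\theta_{N(A_1^{24})}$ as the unique weight $12$ form with constant term $1$ and $q$-coefficient $48$, proving the first assertion.

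Finally, to extract the $q^2$-coefficient I would expand $\theta_{N(A_1^{24})} = \alpha E_4^3 + \beta \Delta$ in the chosen basis. Matching constant terms forces $\alpha = 1$, and matching $q$-coefficients, using $E_4^3 = 1 + 720q + 179280 q^2 + \cdots$ and $\Delta = q - 24 q^2 + \cdots$, forces $720 + \beta = 48$, so $\beta = -672$. The $q^2$-coefficient is then $179280 - 24 \cdot (-672) = 179280 + 16128 = 195408$, giving $195408$ vectors of norm $4$, as claimed. The whole argument is routine; the only point demanding any care is the injectivity of the coefficient map, which is precisely what licenses reading off the entire form from its first two coefficients, and I anticipate no genuine obstacle beyond the short bookkeeping in the ring of modular forms.
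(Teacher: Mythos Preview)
Your proposal is correct and matches the paper's intended argument: the paper states this lemma without an explicit proof, but the remark following the analogous lemma for $\Lambda$ makes clear that the intended method is exactly yours, namely expressing the weight $12$ form in the basis $\{E_4^3,\Delta\}$ and reading off coefficients. Your computation of the $q^2$-coefficient is accurate, and your justification that the first two Fourier coefficients determine a weight $12$ form is the only point the paper leaves entirely to the reader.
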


The automorphisms of $N(A_1^{24})$ form a group of type $2^{24}.M_{24}$, where the notation means there is a normal subgroup isomorphic to $(\bZ/2\bZ)^{24}$, with quotient isomorphic to $M_{24}$.  The normal 2-group is the Weyl group, generated by root reflections, and the quotient $M_{24}$ 
comes from basis permutations that preserve the Golay code.

\subsection{The Leech Lattice}

There are many constructions of the Leech lattice $\Lambda$, but a construction starting from $N(A_1^{24})$ is most convenient for us.  The vector $\frac{1}{4}\alpha_{\cC} = \frac{1}{\sqrt{8}}(1,\ldots,1) \in \bR^{24}$ has inner product $\pm \frac{1}{2}$ with all roots in our presentation of $N(A_1^{24})$, and in fact, the inner product with any element of $N(A_1^{24})$ lies in $\frac{1}{2}\bZ$, so the sublattice $\Lambda_0$ whose vectors have integer inner product with $\frac{1}{4}\alpha_{\cC}$ has index 2, and has no roots.  $\Lambda_0$ has 3 non-identity cosets in its dual lattice, and only two of them yield even lattices, so we may construct $\Lambda$ as the unique even lattice strictly containing $\Lambda_0$ that is not $N(A_1^{24})$.  This construction realizes $\Lambda$ and $N(A_1^{24})$ as ``neighbors'' in the sense of Kneser.

The nontrivial coset contains the vector $(-\frac{3}{\sqrt{8}},\frac{1}{\sqrt{8}},\ldots,\frac{1}{\sqrt{8}})$.  More intrinsically, we may present $\Lambda$ as the lattice with vectors of the form $(a_1,\ldots,a_{24}) \in (\frac{1}{\sqrt{8}}\bZ)^{24}$ satisfying: 
\begin{enumerate}
\item $\sqrt{8} \sum_i a_i = 4m$ for some integer $m$. 
\item The set $S = \{ i | \sqrt{8} a_i \not\equiv m \mod 4\bZ \} \in \cC$, and furthermore, $\sqrt{8}a_i \equiv m \mod 2\bZ$ for all $i \in S$.  
\end{enumerate}

\begin{lem}
The theta function of $\Lambda$ is the unique weight 12 modular form with constant term $1$ and vanishing $q$-coefficient: $\theta_\Lambda(\tau) = 1 + 196560q^2 + \cdots$.  In particular, there are no vectors of norm 2 and 196560 vectors of norm 4.
\end{lem}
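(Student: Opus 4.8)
The plan is to realize $\theta_\Lambda$ as a point in the two-dimensional space of weight $12$ modular forms for $SL_2(\bZ)$, and then pin it down using two Fourier coefficients that can be read off directly from the lattice. First, since $\Lambda$ is a positive definite even unimodular lattice of rank $24$, the earlier lemma (obtained from Poisson summation) shows that $\theta_\Lambda$ is a modular form of weight $12$. I would then invoke the standard structure theory of modular forms for $SL_2(\bZ)$: the space $M_{12}$ of weight $12$ forms has dimension $2$, with a convenient basis given by $E_4^3 = 1 + 720q + 179280q^2 + \cdots$ and the discriminant cusp form $\Delta = q - 24q^2 + \cdots$.

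The key observation is that a weight $12$ modular form is completely determined by its constant term $a_0$ and its $q$-coefficient $a_1$. Indeed, the linear map $M_{12} \to \bC^2$ sending $f$ to $(a_0, a_1)$ has trivial kernel: any $f$ with $a_0 = 0$ is a cusp form, hence a scalar multiple of $\Delta$, and since the $q$-coefficient of $\Delta$ equals $1 \neq 0$, the condition $a_1 = 0$ forces $f = 0$. As this is an injective map between two-dimensional spaces, it is an isomorphism, which establishes the asserted uniqueness.

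It then remains to compute the first two coefficients of $\theta_\Lambda$ geometrically. The constant term counts vectors of norm $0$, so $a_0 = 1$; the coefficient $a_1$ counts vectors of norm $2$, that is, roots, and since $\Lambda$ is rootless by construction we get $a_1 = 0$. Thus $\theta_\Lambda$ is the unique weight $12$ form with $(a_0, a_1) = (1,0)$, which in the basis above is $E_4^3 - 720\Delta$. Reading off the $q^2$-coefficient gives $179280 - 720 \cdot (-24) = 196560$, so $\Lambda$ has $196560$ vectors of norm $4$, as claimed.

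Finally, I would remark that this argument is essentially identical to the one for $\theta_{N(A_1^{24})}$ in the previous lemma, the only change being the input $a_1 = 0$ in place of $a_1 = 48$; as a consistency check, the difference $\theta_{N(A_1^{24})} - \theta_\Lambda = 48\Delta$ recovers the expected relation between these two Kneser neighbors. There is no serious obstacle here: the only real work is to quote (or reprove via the valence formula) that $\dim M_{12} = 2$ together with the explicit $q$-expansions of $E_4$ and $\Delta$, and then to carry out the short Fourier-coefficient bookkeeping without arithmetic error.
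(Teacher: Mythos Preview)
Your proposal is correct and follows essentially the same approach as the paper, which does not give a formal proof but simply remarks that one can compute using the spanning set $E_4^3$ and $\Delta$ of the space of weight $12$ modular forms. Your argument just fills in the details of that remark, including the explicit arithmetic yielding $196560$.
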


It is not difficult to compute the coefficients of a spanning set of the space of weight 12 modular forms, e.g., $E_4(\tau)^3 = (1 + 240\sum_{n\geq 1} q^n \sum_{d|n} d^3)^3$ and $\Delta(\tau) = q\prod_{n\geq 1}(1-q^n)^{24}$.  Thus, it is also not difficult to enumerate the vectors in $\Lambda$.

\begin{defn}
We write $Co_0$ to denote the automorphism group of $\Lambda$, and $Co_1$ to denote the quotient of $Co_0$ by its center $\pm 1$.
\end{defn}

\begin{thm} (\cite{C69})
The group $Co_1$ is simple of order $2^{21}\cdot 3^9 \cdot 5^4 \cdot 7^2 \cdot 11 \cdot 13 \cdot 23$.
\end{thm}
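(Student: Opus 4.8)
The plan is to compute $|Co_0|$ by orbit--stabilizer on a well-chosen $Co_0$-set and then deduce simplicity of $Co_1 = Co_0/\{\pm 1\}$ from Iwasawa's criterion. The natural object to act on is the set of \emph{frames}: unordered sets $\{\pm v_1,\ldots,\pm v_{24}\}$ of $48$ norm-$8$ vectors forming $24$ mutually orthogonal pairs. These correspond bijectively to the ``type $4$'' cosets of $2\Lambda$ in $\Lambda$, that is, the cosets whose shortest vectors have norm $8$; each such coset consists of exactly these $48$ vectors. Fixing the standard frame arising from the coordinate presentation of $\Lambda$ given above, I would first identify its stabilizer in $Co_0$ as the monomial group $N = 2^{12}{:}M_{24}$: the factor $M_{24}$ permutes the $24$ axes while preserving the Golay code $\cC$ used to glue $\Lambda$, and the elementary abelian $2^{12}$ consists of the axis sign-changes whose sign pattern lies in $\cC$ (these are exactly the diagonal isometries preserving $\Lambda$; the all-ones codeword gives $-\id$, so $-1 \in 2^{12}$). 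Using $|M_{24}| = 2^{10}\cdot 3^3\cdot 5\cdot 7\cdot 11\cdot 23$ from the Golay code theorem, $|N| = 2^{22}\cdot 3^3\cdot 5\cdot 7\cdot 11\cdot 23$. Next I would count frames: the $Co_0$-invariant $\bF_2$-valued quadratic form on $\Lambda/2\Lambda \cong \bF_2^{24}$ induced by $\tfrac12(\cdot,\cdot)$ sorts the nonzero cosets into types $2,3,4$ of minimal norm $4,6,8$, and a direct count against $\cC$ gives $8292375 = 3^6\cdot 5^3\cdot 7\cdot 13$ cosets of type $4$, hence $8292375$ frames.

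The crux is transitivity of $Co_0$ on frames. The monomial group $N$ fixes the standard frame and has three orbits on the $196560$ minimal vectors, of sizes $1104$, $97152$, $98304$ according to their shapes relative to the axes, so transitivity forces the existence of an automorphism of $\Lambda$ outside $N$. Following Conway, I would exhibit an explicit extra isometry $\xi$, built blockwise from the hexacode/MOG description of $\cC$ out of a fixed orthogonal $8\times 8$ matrix, and verify directly that $\xi(\Lambda) = \Lambda$ and that $\xi$ does not preserve the standard frame. One then checks that $\langle N,\xi\rangle$ fuses the three $N$-orbits of minimal vectors, hence acts transitively on frames. Granting this, orbit--stabilizer yields $|Co_0| = 8292375\cdot |N| = 2^{22}\cdot 3^9\cdot 5^4\cdot 7^2\cdot 11\cdot 13\cdot 23$, so $|Co_1| = \tfrac12|Co_0| = 2^{21}\cdot 3^9\cdot 5^4\cdot 7^2\cdot 11\cdot 13\cdot 23$, the claimed order.

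For simplicity I would apply Iwasawa's criterion to the action of $Co_1 = Co_0/\{\pm 1\}$ on the $8292375$ frames. The kernel of $Co_0$ acting on frames is exactly $\{\pm 1\}$ (equivalently, the reduction map $Co_0 \to GL_{24}(\bF_2)$ on $\Lambda/2\Lambda$ has kernel $\{\pm 1\}$), so $Co_1$ acts faithfully; the action is primitive because $N$ is a maximal subgroup of $Co_0$. The point stabilizer in $Co_1$ is $2^{11}{:}M_{24}$, and its normal subgroup $A \cong 2^{11}$, the image of the sign-change group, is abelian; using transitivity one checks that the $Co_1$-conjugates of $A$ generate $Co_1$. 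Combined with perfectness of $Co_1$ (which follows from $M_{24}$ being perfect together with the generation of $Co_0$ by $N$ and $\xi$), Iwasawa's criterion gives that $Co_1$ is simple.

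The hardest step is the transitivity in the second paragraph: the entire computation rests on producing a single automorphism of $\Lambda$ that fails to preserve the coordinate frame, which is precisely the point where the special geometry of the Leech lattice, encoded in the Golay code and the MOG, becomes indispensable. The two verifications needed for Iwasawa---the maximality of $N$ (giving primitivity) and the fact that the sign-change involutions together with their conjugates generate all of $Co_1$---are also delicate, but I expect both to reduce to a concrete analysis of how $\langle N,\xi\rangle$ acts on $\Lambda/2\Lambda$.
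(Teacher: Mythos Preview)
Your order computation via frames, stabilizer $2^{12}.M_{24}$, and transitivity is the same strategy the paper outlines (following Conway). The simplicity argument, however, is a genuinely different route. The paper's sketch (again following Conway) does not use Iwasawa on frames; instead it uses the primitive action of $Co_1$ on the $98280$ lines through norm-$4$ vectors. A nontrivial normal $H$ is then transitive on those lines, so $13\mid |H|$; a Frattini argument on a Sylow $13$-subgroup gives $Co_1 = H\cdot N_{Co_1}(P_{13})$, forcing $23$ to divide one of the two factors. If $23$ divides the $13$-normalizer one obtains an element of order $13\cdot 23$, impossible in $GL_{24}(\bQ)$ since $\phi(299)=264>24$; if $23\mid |H|$ then $H\cap N$ is normal in the frame stabilizer $N=2^{11}.M_{24}$ and contains an element of order $23$, whence $H\cap N=N$ and maximality of $N$ gives $H=Co_1$.

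Your Iwasawa approach is cleaner and more in line with textbook treatments, but two remarks. First, it falls outside the paper's self-imposed ``19th-century group theory'' aesthetic, since Iwasawa's lemma is from 1941; the paper deliberately advertises that Conway's argument needs only Sylow and Frattini. Second, your perfectness sketch is not quite enough: $N=2^{12}{:}M_{24}$ itself has abelianization of order $2$ (the $M_{24}$-coinvariants of the Golay code are one-dimensional, spanned by the all-ones word), so perfectness of $M_{24}$ alone does not give perfectness of $Co_0$; you really do need $\xi$ to kill that last $\bZ/2$, which is one more explicit check. Both approaches end up needing the maximality of the frame stabilizer; what your route buys is a single clean criterion in place of the ad hoc $13$/$23$ case split, at the cost of three separate verifications (maximality, generation by conjugates of the $2^{11}$, perfectness) that you correctly flag as the delicate points.
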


We briefly outline the elements of Conway's proof - perhaps the most sophisticated group theory lies in the use of Sylow's theorem and the Frattini argument, both from the late 19th century.  The order can be computed by enumerating ``frames'' or ``crosses'', namely systems of 24 mutually orthogonal lines spanned by norm 8 vectors.  The stabilizer of a cross in $Co_1$ is a maximal non-normal subgroup $N$ of type $2^{11}.M_{24}$, where the normal 2-subgroup is given by products of reflections parametrized by Golay codewords, and $M_{24}$ permutes the lines.  To get the order, one then shows that $Co_1$ acts transitively on crosses, and that any element of $Co_1$ either stabilizes a given cross or takes it to a disjoint cross - this follows from the fact that the norm 8 vectors in a cross are precisely the norm 8 vectors in a single coset of $2\Lambda$ in $\Lambda$.

Conway's proof of simplicity uses the fact that $Co_1$ acts primitively on lines spanned by norm 4 vectors, so any nontrivial normal subgroup $H$ acts transitively on these lines, and hence has order dividing 13. By Frattini, $Co_1$ is a product of $H$ and the normalizer of a 13-Sylow subgroup, so at least one of these two groups has order dividing 23.  In the latter case, we get an element of order $13 \cdot 23$, contradicting a rank restriction coming from rational canonical form.  In the former case, where $|H|\in 23\bZ$, one notes that $H \cap N$ is normal in $N$ with an element of order 23, hence all of $N$, so by maximality of $N$ we have $H = Co_1$.  

\section{Vertex algebraic ingredients}

\subsection{Basic definitions}

\begin{defn}
Let $V$ be a complex vector space.  A \textbf{field} on $V$ is a linear map $V \to V((z))$, where $V((z))$ denotes the space of formal Laurent series with coefficients in $V$.  We will write Laurent series as $\sum_{n \in \bZ} v_n z^{-n-1}$, where $v_n = 0$ for $n$ sufficiently large, and we will write fields as series $A(z) = \sum_{n \in \bZ} A_n z^{-n-1} \in (\End V)[[z,z^{-1}]]$.
\end{defn}

This notion of field is an algebraist's version of ``quantum field on $\bC \setminus\{0\}$'' in the following sense: Recall that quantum fields are axiomatized by Wightman as ``operator-valued distributions on spacetime'', which are linear maps that take in a smooth test function and produce an operator on a Hilbert space.  Here, our test functions are Laurent polynomials $f(z) = a_{-m}z^{-m} + \cdots + a_n z^n$, and the pairing with a field $A(z)$ is given by taking the residue at 0 of $f(z)A(z)dz$, i.e., the coefficient $\sum_{i=-m}^n a_i A_i$ of $z^{-1}dz$.

Vertex algebras can be viewed as an axiomatization of the vague notion of ``commutative ring of quantum fields'' where commutativity is weakened to ``locality'' in order to allow for singularities.

\begin{defn}
Two fields $A(z)$ and $B(z)$ on $V$ are \textbf{mutually local} if there there is a non-negative integer $N$ such that $(z-w)^N [A(z),B(w)] = 0$ as an element of $(\End V) [[z,z^{-1},w,w^{-1}]]$.  The smallest such $N$ is called the \textbf{order of locality} of $A(z)$ and $B(z)$.
\end{defn}

When we are given fields with no singularities, locality is the same as commutativity.  For example, in a commutative ring $R$, for any $a \in R$, the ``left multiplication by $a$'' map can be viewed as a constant field $b \mapsto ab = a_{-1}b z^0$ on $R$, and any two left-multiplication maps strictly commute.  The identity element yields the identity field $I(z) = \id_R z^0$.  However, fields in general cannot be composed in the way that left-multiplication composes in a commutative ring.  We instead have a collection of composition laws parametrized by integers.

\begin{defn}
Given a pair $A(z),B(z)$ of fields, the $m$th \textbf{residue product} is defined as:
\[ \begin{aligned} A(z)_mB(z) &= \underset{y=0}{\Res} \, A(y)B(z)(y-z)^m dy - \underset{y=0}{\Res} \, B(z)A(y)(y-z)^m dy \\
&= \sum_{n \in \bZ} \sum_{i \geq 0} (-1)^i \binom{m}{i} (A_{m-i}B_{n+i} - (-1)^mB_{m+n-i}A_i)z^{-n-1}
\end{aligned} \]
Here, we expand $(y-z)^m$ as a formal power series by the ``first field variable is big'' convention: when attached to $A(y)B(z)$, we set $|y|>|z|$, yielding the expansion $y^m (1-z/y)^m = y^m \sum_{i \geq 0} \binom{m}{i} z^i y^{-i}$, and when attached to $B(z)A(y)$, we set $|z| > |y|$, yielding the expansion $(-1)^m z^m(1- y/z)^m = (-1)^m z^m \sum_{i \geq 0} \binom{m}{i} z^{-i} y^i$.
\end{defn}

Residue products look somewhat cumbersome at first, but they satisfy the following convenient properties (see sections 1 and 2 of \cite{MN97}):
\begin{thm} \label{thm:Matsuo-Nagatomo}
\begin{enumerate}
\item Residue products with the identity field satisfy $A(z)_nI(z) = 0$ for $n\geq 0$, and $A(z)_{-1} I(z) = A(z)$.
\item If $A$ and $B$ are mutually local fields on a complex vector space, then $A(z)_nB(z) = 0$ for sufficiently large positive $n$.
\item If $A,B,C$ are mutually local fields on a complex vector space, then $A$ is mutually local with $B(z)_nC(z)$ for all $n \in \bZ$.
\item If $A,B,C$ are mutually local fields on a complex vector space, then for any $p,q,r \in \bZ$, 
\[ \begin{aligned}
\sum_{i\geq 0} & \binom{p}{i} \left(A(z)_{r+i}B(z)\right)_{p+q-i}C(z) \\
&= \sum_{i \geq 0} (-1)^i \binom{r}{i} \left( A(z)_{p+r-i}(B(z)_{q+i}C(z)) \right.\\
&\qquad \left. -(-1)^r B(z)_{q+r-i}(A(z)_{p+i}C(z)) \right).
\end{aligned} \]
\end{enumerate}
\end{thm}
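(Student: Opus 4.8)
The plan is to derive all four statements directly from the contour-integral form of the residue product, treating $A(y)B(z)$ and $B(z)A(y)$ as operator-valued functions expanded in the two annular regions $|y|>|z|$ and $|z|>|y|$. The single recurring subtlety is bookkeeping of the two expansions of $(y-z)^m$: when $m\geq 0$ the factor is an honest polynomial, both expansions agree, and the two residue terms fuse into one commutator integral $\Res_{y=0}(y-z)^m[A(y),B(z)]\,dy$; when $m<0$ the expansions disagree, and their discrepancy is measured by the formal distribution $\delta(y-z)=\sum_{n\in\bZ}y^nz^{-n-1}$ and its $z$-derivatives. Fixing this dictionary at the outset makes the rest largely mechanical.

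Parts (1) and (2) then fall out quickly. For (1) I substitute $I_k=\delta_{k,-1}\,\id$ into the definition; for $m\geq 0$ the unambiguous polynomial expansion forces the two residue terms to cancel, so $A(z)_m I(z)=0$, while for $m=-1$ the mismatch of the two geometric series for $(y-z)^{-1}$ reassembles precisely into $\sum_k A_k z^{-k-1}=A(z)$. For (2) I let $N$ be the order of locality, so $(y-z)^N[A(y),B(z)]=0$, and for $m\geq N$ factor $(y-z)^m=(y-z)^{m-N}(y-z)^N$ with $m-N\geq 0$; since the second factor already annihilates the commutator and the first is a plain polynomial, $A(z)_mB(z)=\Res_{y=0}(y-z)^{m-N}\big((y-z)^N[A(y),B(z)]\big)\,dy=0$.

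The real content is in (3) and (4), and I expect (4), the associativity (Borcherds) identity, to be the main obstacle. My approach is the three-variable residue calculation: introduce a second working variable so that the outer product on the left-hand side is taken by a residue in a variable $y$ near $z$ and the inner products on the right by residues near $0$, and write both sides as iterated residues $\Res_x\Res_y$ of $A(x)B(y)C(z)$, in various operator orderings, against monomials in $x,y,z$ and the differences $x-z,\,y-z,\,x-y$. The binomial sums $\sum_i\binom{p}{i}$ and $\sum_i(-1)^i\binom{r}{i}$ appearing in the statement are then exactly the records of the expansions $(x-z)^p=\big((x-y)+(y-z)\big)^p$ and of $(x-y)^r$ in the two admissible regions, and the identity itself becomes the formal analogue of decomposing one contour configuration into the other two. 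The two essential inputs are plain associativity of operator composition and the three pairwise localities, which are what allow the orderings of $A,B,C$ to be rearranged as the regions of integration are reshuffled; this is where all the care is needed.

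Finally, (3) follows from the same machinery. Choosing a single $N$ that dominates the three pairwise orders of locality, I would bound the order of locality of $A(z)$ with $B(z)_nC(z)$ by combining the pairwise relations $(z-w)^N[A(z),B(w)]=0$, $(z-w)^N[A(z),C(w)]=0$, and $(z-w)^N[B(z),C(w)]=0$: clearing the corresponding denominators in the residue expression for $B(z)_nC(z)$ shows that a fixed power of $(z-w)$, of size on the order of $3N$, annihilates the commutator with $A(z)$. Since this must hold for every $n\in\bZ$, the locality-closure statement (3) follows, completing the theorem.
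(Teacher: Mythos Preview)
The paper does not give its own proof of this theorem: it simply states the four properties with a parenthetical pointer to sections~1 and~2 of \cite{MN97} (Matsuo--Nagatomo), so there is no in-paper argument to compare your attempt against. What you have sketched is essentially the standard proof one finds in that reference (and in Li's and Kac's treatments): the identity-field computation for (1), the polynomial factorisation $(y-z)^m=(y-z)^{m-N}(y-z)^N$ for (2), the three-variable iterated-residue manipulation for the Borcherds identity (4), and the Dong-lemma bound for (3). Your outline is correct and is exactly the route the paper is deferring to; the only place one needs a little more care than your sketch suggests is (3), where the uniform-in-$n$ bound comes from inserting enough powers of the pairwise differences before taking the inner residue, but you have the right shape and the right estimate of roughly $3N$.
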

The last is called ``Borcherds's identity for residue products''.

We now consider an algebraic structure that encodes the properties of a suitable collection of mutually local fields.

\begin{thm} \label{thm:Li} (\cite{L94})
Suppose we are given a collection $V$ of mutually local fields on a complex vector space $M$ that is closed under residue products and $\bC$-linear combinations, and also contains the identity field $I(z)$.  Then, $V$ has the following properties:
\begin{enumerate}
\item $V$ is a complex vector space equipped with a homogeneous $\bC$-linear multiplication map $Y: V \otimes V \to V((x))$, written $A(z) \otimes B(z) \mapsto Y(A(z),x)B(z) = \sum_{n \in \bZ} A(z)_n B(z) x^{-n-1}$.
\item $Y(A(z),x)I(z) = \sum_{n < 0} \partial^{(-n-1)}A(z) x^{-n-1}$.
\item For any $A(z),B(z) \in W$, the fields $Y(A(z),x)$ and $Y(B(z),x)$ on $V$ are mutually local and satisfy $Y(A(z),x)_n Y(B(z),x) = Y(A(z)_nB(z),x)$ for all $n \in \bZ$.
\end{enumerate}
\end{thm}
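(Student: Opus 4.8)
The plan is to derive each of the three conclusions in turn from the three structural facts recorded in Theorem \ref{thm:Matsuo-Nagatomo}, with the residue product serving as the only computational primitive. For part (1) the only point needing verification is that $Y(A(z),x)B(z)$ actually lands in $V((x))$. Each coefficient $A(z)_nB(z)$ lies in $V$ because $V$ is assumed closed under residue products, and the defining formula for the residue product shows it is $\bC$-bilinear in the pair $(A,B)$, so $Y$ descends to a $\bC$-linear map on $V\otimes V$. The Laurent-series condition, namely that $A(z)_nB(z)$ vanishes for all sufficiently large $n$ (so that only finitely many negative powers of $x$ occur), is exactly Theorem \ref{thm:Matsuo-Nagatomo}(2) applied to the mutually local pair $A,B$. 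Hence $Y$ is well defined.

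For part (2) I would compute $A(z)_mI(z)$ directly from the definition. Writing $I(z)=\id\, z^0$, so that $I_j=\delta_{j,-1}\id$, and feeding this into the mode formula, the two residues collapse to
\[ A(z)_mI(z) = \sum_{i\geq 0}(-1)^i\binom{m}{i}A_{m-i}z^i - (-1)^m\sum_{i\geq 0}(-1)^i\binom{m}{i}A_iz^{m-i}, \]
where the asymmetry between the two sums reflects the two expansion conventions for $(y-z)^m$. For $m\geq 0$ this vanishes and for $m=-1$ it reproduces $A(z)$, recovering Theorem \ref{thm:Matsuo-Nagatomo}(1); writing $m=-k-1$ with $k\geq 0$ and using $\binom{-k-1}{i}=(-1)^i\binom{k+i}{i}$, a short reindexing identifies the result with the divided-power derivative $\partial^{(k)}A(z)$. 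Assembling these against $x^{-m-1}$ over $m<0$ gives the claimed creation formula.

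Part (3) is the heart of the matter, and both of its assertions come from Borcherds's identity, Theorem \ref{thm:Matsuo-Nagatomo}(4). I would first establish the compatibility $Y(A(z),x)_nY(B(z),x)=Y(A(z)_nB(z),x)$. Expanding the left side as a residue product of the two $\End(V)$-valued fields $Y(A(z),x)$ and $Y(B(z),x)$, whose modes are the operators $A(z)_m$ and $B(z)_m$, its $l$th mode applied to a third field $C(z)$ is
\[ \sum_{i\geq 0}(-1)^i\binom{n}{i}\left( A(z)_{n-i}\bigl(B(z)_{l+i}C(z)\bigr) - (-1)^n B(z)_{n+l-i}\bigl(A(z)_iC(z)\bigr) \right), \]
while the $l$th mode of the right side applied to $C(z)$ is $(A(z)_nB(z))_lC(z)$. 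These are precisely the two sides of Borcherds's identity specialized to $p=0$, $r=n$, $q=l$, where the factor $\binom{0}{i}=\delta_{i,0}$ collapses the left-hand sum of the identity to the single term $(A(z)_nB(z))_lC(z)$; so the compatibility holds identically in $l$ and in $C$. Mutual locality then follows formally: since $A,B$ are mutually local, $A(z)_nB(z)=0$ for large $n$ by Theorem \ref{thm:Matsuo-Nagatomo}(2), whence the compatibility gives $Y(A(z),x)_nY(B(z),x)=Y(0,x)=0$ for the same $n$. Invoking the standard commutator formula
\[ [Y(A(z),x),Y(B(z),x')] = \sum_{n\geq 0}\left(Y(A(z),x')_nY(B(z),x')\right)\partial_{x'}^{(n)}\delta(x-x'), \]
with $\delta(x-x')=\sum_{m\in\bZ}x^mx'^{-m-1}$, together with the identity $(x-x')^N\partial_{x'}^{(n)}\delta(x-x')=0$ for $n<N$, forces $(x-x')^N[Y(A(z),x),Y(B(z),x')]=0$ once $N$ exceeds the order of locality of $A$ and $B$.

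The main obstacle I anticipate is the bookkeeping in part (3): correctly unwinding the residue product of the two operator-valued fields into modes, and recognizing the resulting double sum as Borcherds's identity at $p=0$ rather than some other specialization. Once the indices are aligned the argument is purely formal, but the sign $(-1)^n$ and the interchange of the roles of $A$ and $B$ between the two terms leave ample room for error; the same sign-sensitivity affects the competing expansions of $(y-z)^m$ in the otherwise routine computation of part (2).
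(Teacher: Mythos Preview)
Your argument is correct and follows the same overall strategy as the paper: each of the three claims is read off the corresponding part of Theorem~\ref{thm:Matsuo-Nagatomo}. Your treatment of part~(2) is in fact more thorough than the paper's, which simply cites Theorem~\ref{thm:Matsuo-Nagatomo}(1) without spelling out the divided-power derivative for general $n<0$. The only substantive difference is the order in part~(3): the paper proves locality first (via Borcherds with $r=0$, the commutator formula for the original mutually local triple $A,B,C$) and compatibility second (via Borcherds with $p=0$), whereas you do the reverse. Your derivation of compatibility from Borcherds at $p=0$, $r=n$, $q=l$ matches the paper's associativity computation exactly.

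One point deserves care. The ``standard commutator formula'' you invoke for $Y(A(z),x)$ and $Y(B(z),x')$ is not available as a black box at this stage: it is normally stated for fields already known to be mutually local, which is what you are trying to establish. To justify it here you must apply both sides to an arbitrary $C(z)\in V$ and compute $[A(z)_m,B(z)_k]C(z)$ via Borcherds's identity at $r=0$, $p=m$, $q=k$ for the mutually local triple $A,B,C$ on $M$---and that computation is precisely the paper's direct proof of locality. So your route through compatibility is a detour: the locality step still rests on the $r=0$ specialization, and the compatibility identity you just proved is not actually doing work there. Note in particular that the mere vanishing of $Y(A(z),x)_nY(B(z),x')$ for large $n$ does \emph{not} by itself force locality (residue-product vanishing only kills the negative $y$-powers of $(y-x')^N[\,\cdot\,,\,\cdot\,]$); one genuinely needs the commutator computation.
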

\begin{proof}
The vector space structure is clear, and the fact that $Y$ takes elements of $V$ to fields on $V$ is the second claim in Theorem \ref{thm:Matsuo-Nagatomo}.

The claim about products with the identity field is the first claim in Theorem \ref{thm:Matsuo-Nagatomo}.

Locality follows from locality for the input fields: If $A(z)$ and $B(z)$ are local of order $N$, then we may use the Borcherds identity with $r=0$, also called the commutator formula, to get
\[ \begin{aligned}
(x - y)^N&[Y(A(z),x),Y(B(z),y)]C(z) \\
&= (x-y)^N\sum_{m,n \in \bZ} [A(z)_m, B(z)_n] C(z) x^{-m-1}y^{-n-1} \\
&= (x-y)^N\sum_{m,n \in \bZ} \sum_{i =0}^{N-1} \binom{m}{i}(A(z)_iB(z))_{m+n-i}C(z) x^{-m-1}y^{-n-1} \\
&= (x-y)^N\sum_{i =0}^{N-1}\partial^{(i)}_y\delta(x-y) \sum_{j \in \bZ} (A(z)_iB(z))_j C(z) y^{-j-1}\\
&= 0
\end{aligned} \]
where we define $\partial^{(i)}_y \delta(x-y) = \sum_{m \in \bZ} \binom{m}{i}x^{-m-1}y^{m-i}$, which is known to satisfy $(x-y)^N\partial^{(i)}_y \delta(x-y) =0$ for $i \leq N$.

Finally, the residue product identity follows from the Borcherds identity with $(p,q,r) = (0,n,m)$ (also known as the associativity rule): For any $C(z) \in V$, 
\[ \begin{aligned}
Y(A(z)_m B(z),x)C(z) &= \sum_{n \in \bZ} (A(z)_m B(z))_nC(z) x^{-n-1} \\
&= \sum_{n \in \bZ} \sum_{i \geq 0} (-1)^i \binom{m}{i}(A(z)_{m-i}(B(z)_{n+i}C(z)) \\
&\qquad - (-1)^{m-p(A)p(B)}B(z)_{m+n-i}(A(z)_iC(z)))x^{-n-1} \\
&= (Y(A(z),x)_n Y(B(z),x)) C(z)
\end{aligned}\]
\end{proof}

We give this structure a name (but this is not the historical motivation):

\begin{defn}
A \textbf{vertex algebra} (over $\bC$) is a complex vector space $V$ equipped with a multiplication map $Y: V \otimes V \to V((z))$, written $u \otimes v \mapsto Y(u,z)v = \sum_{n \in \bZ} u_n v z^{-n-1}$, and a distinguished unit vector $\unit$, satisfying the following properties:
\begin{enumerate}
\item (unit axiom) $Y(u,z)\unit \in u + zV[[z]]$ for all $u \in V$.  That is, $u_n \unit$ vanishes for $n \geq 0$ and $u_{-1}\unit = u$.
\item (locality) For any $u, v \in V$, the fields $Y(u,z)$ and $Y(v,z)$ are mutually local.
\item (associativity) For any $u, v \in V$ and $n \in \bZ$, $Y(u_n v, z) = Y(u,z)_n Y(v,z)$.
\end{enumerate}
The power series $Y(u,z) \in (\End V)[[z,z^{-1}]]$ is called the ``field attached to $u$''.
\end{defn}

Because of Li's theorem, if we are given a vertex algebra $V$, then the map $Y$ gives an isomorphism from $V$ to the vertex algebra of fields on $V$ of the form $Y(u,z)$ for $u \in V$, and the inverse isomorphism takes a field $A(z)$ to the constant term at identity $A(z)\unit|_{z=0}$.  In particular, by combining the Borcherds identity for residue products with $Y(u,z)_nY(v,z) = Y(u_n v,z)$, we find that the ``Borcherds identity'' (often called the Jacobi identity when put into power series form) holds for all $u,v,w \in V$ and $p,q,r \in \bZ$:

\[ \begin{aligned}
\sum_{i\geq 0} & \binom{p}{i} \left(u_{r+i}v\right)_{p+q-i}w \\
&= \sum_{i \geq 0} (-1)^i \binom{r}{i} \left( u_{p+r-i}(v_{q+i}w) \right.\\
&\qquad \left. -(-1)^r v_{q+r-i}(u_{p+i}w) \right).
\end{aligned} \]

\begin{rem}
This is not the only way vertex algebras are axiomatized: One can replace the last two axioms with the Borcherds identity (also known as the Jacobi identity when put into a power series form), and one may replace the condition $Y(u,z)_nY(v,z) = Y(u_n v,z)$ with either the straightforwardly equivalent rule $(a_m b)_n c = \sum_{ i \geq 0} (-1)^i\binom{m}{i}(a_{m-i}(b_{n+i}c)) - (-1)^mb_{m+n-i}(a_i c)$, or translation-equivariance $Y(Tu,z) = [T,Y(u,z)] = \partial_z Y(u,z)$.
\end{rem}

By making various substitutions to the Jacobi identity, one obtains many results, such as:
\begin{enumerate}
\item (commutator formula) $[u_p,v_q] = \sum_{i \geq 0} \binom{p}{i}(u_i v)_{p+q-i}$: set $r=0$.
\item (skew symmetry) $Y(v,z)u = e^{Tz}Y(u,-z)v$, where $Tu = u_{-2}\unit$: set $w = \unit$, $p=-1$, $q=0$.
\item (weak associativity) For any $u,v,w \in V$, there is some $N>0$ such that $(y+z)^NY(u,y+z)Y(v,z)w = (y+z)^N Y(Y(u,y)v,z)w$: let $N$ satisfy $u_p w = 0$ for all $p>N$.
\end{enumerate}
This convenience is one reason many treatments of vertex algebras use the Jacobi identity in place of locality.

The simplest examples of vertex algebras are commutative rings, where we set $Y(u,z)v = u_{-1}v z^0 = uv$.  Slightly more generally, if we are given a unital commutative associative algebra with derivation $D$, we may set $Y(u,z)v = (e^{Dz}u)v$ to get a vertex algebra.  These vertex algebras satisfy the property that $Y(u,z)Y(v,w) = Y(v,w)Y(u,z)$ for all $u,v \in V$.  Vertex algebras satisfying this property are called ``commutative'', and any commutative vertex algebra arises from a commutative associative algebra with derivation this way.  Commutativity is equivalent to the property that $Y(u,z)v \in V[[z]]$, i.e., the power series given by products have no poles at $z=0$.  We may therefore think of a vertex algebra as a generalization of a commutative ring with derivation, where the products are allowed to have singularities at $z=0$.

Before considering the non-commutative examples of interest, we introduce a convenient theorem for showing a collection of fields generate a vertex algebra.

\begin{thm} (Reconstruction theorem, Proposition 3.1 of \cite{FKRW95})
Let $V$ be a vector space, $\unit$ an element of $V$, $T$ an endomorphism of $V$, and let $\{ A^i(z)\}_{i \in J}$ be a set of fields in $V$ including the identity field $I(z) = \id_V z^0$.  Suppose the following properties hold:
\begin{enumerate}
\item $A^i(z)\unit \in A^i + zV[[z]]$ for some vector $A^i \in V$, for each $i \in J$.
\item $T\unit = 0$ and $[T,A^i(z)] = \frac{d}{dz}A^i(z)$
\item All $A^i(z)$ are mutually local.
\item $V$ is spanned by $A^{i_1}_{n_1} \cdots A^{i_k}_{n_k}\unit$, where $k \geq 0$ and each $n_i<0$.
\end{enumerate}
Then, the assignment
$Y(A^{i_1}_{n_1} \cdots A^{i_k}_{n_k} \unit, z) = A^{i_1}(z)_{n_1} \cdots A^{i_k}(z)_{n_k}I(z)$
defines a vertex algebra structure on $V$, and it is the unique vertex algebra structure such that $Y(A^i_{-1} \unit,z) = A^i(z)$ for all $i \in J$.
\end{thm}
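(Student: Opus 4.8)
The plan is to realize $V$ as isomorphic to the vertex algebra of fields generated by the $A^i(z)$, via Li's theorem (Theorem \ref{thm:Li}), and then to transport that structure back to $V$ along an evaluation map. First I would let $W$ be the smallest collection of fields on $V$ containing all the $A^i(z)$ (in particular $I(z)$) and closed under residue products and $\bC$-linear combinations. Property (3) together with parts (2) and (3) of Theorem \ref{thm:Matsuo-Nagatomo} shows that every field in $W$ is mutually local with every other, so Theorem \ref{thm:Li} equips $W$ with a vertex algebra structure, with vacuum $I(z)$ and translation operator $T_W\colon D(z)\mapsto \partial_z D(z)$.

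Next I would establish two stability properties of $W$. First, every $B\in W$ is \emph{creative}: $B_n\unit = 0$ for $n\geq 0$. The creative fields contain the generators (property (1)) and $I(z)$, and they are closed under residue products, as one sees from the mode expansion in the definition of $A(z)_m B(z)$ by checking that, when the outer index is $\geq 0$, every surviving term applies some $A_i$ or $B_j$ with $i,j\geq 0$ to $\unit$. Second, every $B\in W$ is $T$-covariant, $[T,B(z)]=\partial_z B(z)$: this holds for the generators (property (2)), and $[T,-]$ is a derivation for residue products because conjugation by the fixed operator $T$ commutes with $\underset{y=0}{\Res}$ and with the scalar factors $(y-z)^m$.

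I would then define the evaluation map $\phi\colon W\to V$ by $\phi(B)=B_{-1}\unit$ (the constant term of $B(z)\unit$). Creativity yields the key intertwining identity
\[ \phi\bigl(A(z)_n B(z)\bigr) = A_n\,\phi(B) \qquad (\star) \]
for all $A,B\in W$ and all $n\in\bZ$, proved by setting the outer index to $-1$ in the mode formula and discarding the terms killed by creativity of $A$ and $B$. Iterating $(\star)$ at negative indices gives $\phi(A^{i_1}(z)_{n_1}\cdots A^{i_k}(z)_{n_k}I(z)) = A^{i_1}_{n_1}\cdots A^{i_k}_{n_k}\unit$ whenever the $n_j<0$, so property (4) makes $\phi$ surjective. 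Combining covariance, creativity, and $T\unit=0$ also gives $\phi(T_W B)=B_{-2}\unit=T\phi(B)$, so $\phi$ intertwines $T_W$ with $T$.

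The main obstacle is injectivity of $\phi$, which is exactly the well-definedness of the proposed assignment (a relation $\sum_j c_j A^{i_1^j}_{n_1^j}\cdots\unit=0$ in $V$ must force the corresponding relation among fields). I would prove it as follows: suppose $\phi(B)=0$. By $(\star)$, $\phi(C(z)_n B(z))=C_n\phi(B)=0$ for all $C\in W$ and all $n$. Skew-symmetry in $W$ expresses $B(z)_n C(z)$ as a finite $\bC$-linear combination of terms $T_W^j(C(z)_{m}B(z))$, and applying $\phi T_W = T\phi$ then gives $\phi(B(z)_n C(z))=0$; hence $(\star)$ yields $B_n\phi(C)=0$ for all $C,n$, and surjectivity of $\phi$ forces $B(z)=0$. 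Thus $\phi$ is a linear isomorphism. Transporting $Y_W$ along $\phi$ gives a vertex algebra structure on $V$, and $(\star)$ shows the field attached to $\phi(B)$ is precisely $B(z)$, so the structure maps and axioms are inherited through the isomorphism and the formula $Y(A^{i_1}_{n_1}\cdots A^{i_k}_{n_k}\unit,z)=A^{i_1}(z)_{n_1}\cdots A^{i_k}(z)_{n_k}I(z)$ holds. Uniqueness is immediate, since in any vertex algebra the identity $Y(u_nv,z)=Y(u,z)_nY(v,z)$ forces the value of $Y$ on the spanning vectors once $Y(A^i_{-1}\unit,z)=A^i(z)$ is imposed.
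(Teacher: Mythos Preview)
The paper does not supply its own proof of this theorem; it merely states it with the citation to \cite{FKRW95} and moves on, so there is nothing in the paper to compare your argument against directly. That said, your proposal is a correct and complete proof, and it is in fact the approach one finds in modern treatments (e.g., Frenkel--Ben-Zvi, or Kac's book): build the vertex algebra of fields $W$ via Li's theorem, show the evaluation map $B\mapsto B_{-1}\unit$ is a vertex-algebra isomorphism onto $V$, and read off the desired formula. Your verification of creativity and $T$-covariance being closed under residue products is right, the intertwining identity $(\star)$ is correctly derived, and your injectivity argument via skew-symmetry in $W$ together with $\phi T_W = T\phi$ is clean and valid. The uniqueness clause is handled exactly as it should be, via the associativity axiom $Y(u_n v,z)=Y(u,z)_n Y(v,z)$.
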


We also note that \cite{FLM88} introduced a variant of the notion of vertex algebra that has some additional structure and satisfies convenient finiteness conditions:

\begin{defn}
A \textbf{vertex operator algebra} of central charge $c \in \bC$ is a quadruple $(V,Y,\unit,\omega)$, where $(V,Y,\unit)$ is a vertex algebra, and $\omega \in V$ is a \textbf{conformal vector} satisfying the following properties:
\begin{enumerate}
\item The coefficients of the field $T(z) = Y(\omega,z) = \sum L_n z^{-n-2}$ satisfy $[L_m,L_n] = (m-n)L_{m+n} + \binom{m+1}{3}\delta_{m,-n} \frac{c}{2}\id_V$, i.e., $V$ is endowed with the structure of a representation of the Virasoro algebra with central charge $c$.
\item $L_0$ acts diagonalizably with integer eigenvalues that are bounded below and finite dimensional eigenspaces, and furthermore, $L_0 \unit = 0$ and $L_0 \omega = 2\omega$.
\item $L_{-1}u = u_{-2}\unit$ for all $u \in V$.
\end{enumerate}
\end{defn}

All of the vertex algebras we consider are vertex operator algebras for a natural choice of $\omega$.  Sometimes, there are alternative choices of conformal vector, but this is not important in our treatment.

\subsection{Lattice vertex algebras}

One of the easiest noncommutative examples of a vertex algebra is the Heisenberg vertex algebra attached to a vector space $\fh$ with inner product.  We form the infinite dimensional Heisenberg Lie algebra $\hat{\fh}$, which decomposes into $\fh[t,t^{-1}] \oplus \bC K$ as a vector space, with Lie bracket determined by $[u t^m, vt^n] = m(u,v)\delta_{m,-n}K$, $[K,ut^n] = 0$ for all $u, v \in \fh$, $m,n \in \bZ$.  This is a central extension of the abelian Lie algebra $\fh[t,t^{-1}]$.  We let $\fh^+$ denote the Lie subalgebra $\fh[t] \oplus \bC K$, and for each $u \in \fh$, we let $\bC_u$ denote the one-dimensional representation on which $vt^n$ acts by $(v,u)\delta_{n,0}$ and $K$ acts by identity.  Then, we define $\pi_u$ (or $\pi^\fh_u$ if $\fh$ is ambiguous) to be the induced representation $\Ind_{\fh^+}^{\hat{\fh}} \bC_u$.  Because $\hat{\fh}$ decomposes as a direct sum of abelian subalgebras $\fh^+$ and $t^{-1}\fh[t^{-1}]$, we see by the Poincar\'e-Birkhoff-Witt theorem that $\pi_u$ is a free module of rank 1 for the universal enveloping algebra $U(t^{-1}\fh[t^{-1}])$, which is isomorphic to the symmetric algebra $\Sym(t^{-1}\fh[t^{-1}])$.  In particular, given an ordered basis $e^1,\ldots,e^d$ of $\fh$, we obtain a basis of $\pi_u$ given by monomials of the form $e^{i_1}t^{n_1}\cdots e^{i_k}t^{n_k} v_u$, where $n_1 \leq \cdots \leq n_k < 0$ and $i_j < i_{j+1}$ if $n_j = n_{j+1}$.  

\begin{thm}
There is a unique vertex algebra structure on $\pi_0$ satisfying
\[ \unit = v_0 \quad \text{and} \quad Y(u t^{-1},z) = \sum_{n \in \bZ} u t^n z^{-n-1}, \]
for all $u \in \fh$.  Furthermore, the vector $\omega = \frac{1}{2}\sum_{i=1}^d (e^i t^{-1})^2\unit$ endows $\pi_0$ with a vertex operator algebra structure with central charge $d$, where $e^1,\ldots,e^d$ is any orthonormal basis of $\fh$
\end{thm}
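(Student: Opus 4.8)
The plan is to build both structures from the generating currents using the reconstruction theorem, and then to extract the Virasoro relations from the commutator formula.

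\emph{The vertex algebra structure.} Fix an orthonormal basis $e^1,\dots,e^d$ of $\fh$ and set $A^i(z) = \sum_{n\in\bZ} (e^it^n)\, z^{-n-1}$, viewed as a field on $\pi_0$, and include the identity field $I(z)$. I would define $T$ to be the derivation of $\Sym(t^{-1}\fh[t^{-1}]) \cong \pi_0$ determined by $Tv_0 = 0$ and $T(ut^{-n}) = n\, ut^{-n-1}$ for $n\geq 1$; this is the lift to $\pi_0$ of the derivation $ut^n\mapsto -n\,ut^{n-1}$, $K\mapsto 0$ of $\hat\fh$ (a one-line check shows this respects the bracket). Then I verify the four hypotheses of the reconstruction theorem. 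Hypothesis (1) holds because $(e^it^n)v_0 = 0$ for $n\geq 0$ while $(e^it^{-1})v_0 = A^i$, so $A^i(z)v_0 \in A^i + z\pi_0[[z]]$. Hypothesis (2) is the pair of identities $Tv_0 = 0$ and $[T,A^i(z)] = \frac{d}{dz}A^i(z)$, the latter a shift-of-index computation. For hypothesis (3), the Heisenberg relations give $[A^i(z),A^j(w)] = (e^i,e^j)\,\partial_w\delta(z-w)$, and since $(z-w)^2\partial_w\delta(z-w) = 0$ the currents are mutually local (of order $\leq 2$). Hypothesis (4) is exactly the Poincaré--Birkhoff--Witt spanning of $\pi_0$ by the monomials $(e^{i_1}t^{n_1})\cdots(e^{i_k}t^{n_k})v_0$ with all $n_j<0$. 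The reconstruction theorem then yields a unique vertex algebra structure with $Y((e^it^{-1})\unit,z) = A^i(z)$; extending by linearity in $u$ gives the asserted formula for $Y(ut^{-1},z)$, and the uniqueness clause of that theorem gives the uniqueness claim.

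\emph{The conformal vector.} By the reconstruction formula together with part (1) of Theorem \ref{thm:Matsuo-Nagatomo} (namely $A^i(z)_{-1}I(z) = A^i(z)$), the field attached to $\omega = \frac12\sum_i (e^it^{-1})^2\unit$ is the normally ordered sum $Y(\omega,z) = \frac12\sum_i A^i(z)_{-1}A^i(z)$. Writing $Y(\omega,z) = \sum_n L_n z^{-n-2}$, so that $L_n = \omega_{n+1}$, the goal is the Virasoro relation with $c=d$. I would obtain it from the commutator formula $[u_p,v_q] = \sum_{i\geq 0}\binom{p}{i}(u_iv)_{p+q-i}$ applied to $u=v=\omega$, which reduces everything to the four products $\omega_0\omega,\ \omega_1\omega,\ \omega_2\omega,\ \omega_3\omega$ (the higher ones vanishing by part (2) of Theorem \ref{thm:Matsuo-Nagatomo}). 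A direct computation with the Heisenberg relations should give $\omega_0\omega = T\omega$, $\omega_1\omega = 2\omega$, $\omega_2\omega = 0$, and $\omega_3\omega = \frac d2\unit$. Feeding these into the commutator formula and using $(Tu)_k = -k\,u_{k-1}$ together with $\unit_k = \delta_{k,-1}\id$, the first two products combine to $(m-n)L_{m+n}$ and the last contributes $\binom{m+1}{3}\frac d2\delta_{m,-n}\id$, which is precisely the Virasoro relation with central charge $c=d$.

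\emph{The main obstacle} is the single product $\omega_3\omega = \frac d2\unit$: this is the only step whose value sees the central element $K$ of $\hat\fh$, and it is where the central charge is produced. It comes from the ``double contraction'' of the two normally ordered currents against each other; the factor $d$ is the sum $\sum_i (e^i,e^i)$ over the orthonormal basis and the combinatorial constant comes from summing the Heisenberg anomalies. Equivalently one may expand $L_n = \frac12\sum_i\sum_k :\!(e^it^k)(e^it^{n-k})\!:$ and compute the mode anomaly directly, recognizing $\frac{d}{12}(m^3-m) = \frac d2\binom{m+1}{3}$; all the delicacy is in the bookkeeping of the normal ordering.

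\emph{The remaining axioms} are then routine. The operator $L_0 = \omega_1$ acts on the monomial basis by $\sum_j(-n_j)$, so it is diagonalizable with nonnegative integer eigenvalues, has finite dimensional eigenspaces (only finitely many monomials of each degree), and satisfies $L_0\unit = 0$ and $L_0\omega = 2\omega$ since $\omega$ has degree $2$. Finally $L_{-1} = \omega_0$ satisfies $L_{-1}\unit = 0$ and $[L_{-1},A^i(z)] = \frac{d}{dz}A^i(z)$, so by uniqueness of the translation operator it coincides with $T$; in particular $L_{-1}u = u_{-2}\unit$ for all $u$, completing the verification that $(\pi_0,Y,\unit,\omega)$ is a vertex operator algebra of central charge $d$.
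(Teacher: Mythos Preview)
Your proposal is correct and follows the same route the paper sketches: apply the reconstruction theorem after verifying pairwise locality of the generating currents via the Heisenberg relation $(z-w)^2[A^i(z),A^j(w)]=0$. The paper's discussion after the theorem says exactly this for the first claim and is silent on the conformal-vector claim; your treatment of $\omega$ via the commutator formula and the products $\omega_i\omega$ for $i\le 3$ is the standard computation and simply fills in what the paper omits.
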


The first claim is immediate from the reconstruction theorem, once we establish pairwise locality of the fields $Y(e^i t^{-1},z)$.  The essential relation $(z-w)^2[Y(e^i t^{-1},z)Y(e^j t^{-1},w)] = 0$ follows from a straightforward calculation using the Lie bracket on $\hat{\fh}$.  The grading on $\pi_0$ by $L_0$-eigenvalues is non-negative, and when the inner product on $\fh$ is nonsingular, the degree $n$ part has dimension $p(n)$, the number of partitions of $n$ into positive integers.  We can assemble the graded dimensions into a power series, called the character:

\begin{lem}
The character of $\pi_0$, given by $\sum_{n \geq 0} \dim (\pi_0)_n q^{n-d/24}$, is equal to the modular form
\[ \eta(\tau)^{-d} = q^{-d/24}\prod_{k=1}^{\infty} (1-q^k)^{-d} \]
\end{lem}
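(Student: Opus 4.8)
The plan is to compute the graded dimension of $\pi_0$ directly from its description as a free module and then recognize the answer as $\eta^{-d}$. First I would invoke the Poincar\'e--Birkhoff--Witt identification already recorded above: as a graded vector space $\pi_0 \cong \Sym(t^{-1}\fh[t^{-1}])$, with the monomial basis $e^{i_1}t^{n_1}\cdots e^{i_k}t^{n_k}v_0$ (where $n_1 \le \cdots \le n_k < 0$, and $i_j < i_{j+1}$ when $n_j = n_{j+1}$). The grading is by $L_0$-eigenvalue, so everything reduces to pinning down the degree of each generator $e^i t^{-k}$ and then doing a bookkeeping computation on the symmetric algebra.

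The key step is to show that $L_0$ acts as the weighted number operator, that is, $L_0\big(e^{i_1}t^{n_1}\cdots e^{i_k}t^{n_k}v_0\big) = \big(-\sum_j n_j\big)\,e^{i_1}t^{n_1}\cdots e^{i_k}t^{n_k}v_0$; equivalently $[L_0, e^i t^{-k}] = k\,e^i t^{-k}$, so each generator $e^i t^{-k}$ lives in degree $k$. I would derive this from the conformal vector $\omega = \frac12\sum_i (e^it^{-1})^2\unit$ by extracting the mode $L_0$ from $Y(\omega,z) = \sum_n L_n z^{-n-2}$: writing $a^i_n = e^i t^n$, the normally ordered Sugawara expression gives $L_0 = \tfrac12\sum_i (a^i_0)^2 + \sum_{n\ge 1}\sum_i a^i_{-n}a^i_n$. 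Since $v_0$ spans the one-dimensional module $\bC_0$ on which every $a^i_0$ acts by $0$, and $a^i_0$ is central (the bracket $[a^i_0, a^j_n] = 0$ because of the factor $m=0$), the zero-mode term vanishes identically on $\pi_0$, leaving $L_0 = \sum_{n\ge 1}\sum_i a^i_{-n}a^i_n$. A direct commutation using $[a^i_m, a^j_n] = m(e^i,e^j)\delta_{m,-n}K = m\delta_{ij}\delta_{m,-n}$ then shows this operator returns the eigenvalue $-\sum_j n_j$ on each monomial, confirming the degree claim.

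With the grading fixed, the remaining step is a standard generating-function computation. The space of generators decomposes as $t^{-1}\fh[t^{-1}] = \bigoplus_{k\ge 1} W_k$ with $W_k = \fh\otimes t^{-k}$ of dimension $d$ concentrated in degree $k$, so $\Sym(t^{-1}\fh[t^{-1}]) = \bigotimes_{k\ge 1}\Sym(W_k)$ and the graded dimension factors as $\prod_{k\ge 1}(1-q^k)^{-d}$, since the symmetric algebra on a $d$-dimensional space in degree $k$ has graded dimension $(1-q^k)^{-d}$. Multiplying by the overall shift $q^{-d/24}$ dictated by the convention $\sum_n \dim(\pi_0)_n q^{n-d/24}$ and using $\eta(\tau) = q^{1/24}\prod_{k\ge 1}(1-q^k)$ yields $q^{-d/24}\prod_{k\ge 1}(1-q^k)^{-d} = \eta(\tau)^{-d}$, as claimed.

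I expect the main obstacle to be the middle step: rigorously identifying $L_0$ with the weighted number operator on $\pi_0$. The symmetric-algebra bookkeeping and the product formula are routine, but care is needed in extracting $L_0$ correctly from $Y(\omega,z)$ (the indexing shift in $Y(\omega,z)=\sum_n L_n z^{-n-2}$ must be handled precisely) and in verifying that the zero-mode contribution genuinely vanishes on all of $\pi_0$ rather than merely acting by a constant. Once $[L_0, e^it^{-k}] = k\,e^it^{-k}$ is established, the conclusion is immediate.
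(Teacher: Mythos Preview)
Your argument is correct and is the standard computation. The paper does not actually supply a proof of this lemma---it is stated as a well-known fact, preceded only by the remark that the $L_0$-grading on $\pi_0$ is non-negative with graded pieces counted by partitions---so your derivation via the PBW basis, the Sugawara identification of $L_0$ with the weighted number operator, and the product formula for the graded dimension of $\Sym(t^{-1}\fh[t^{-1}])$ is exactly the routine verification the paper leaves to the reader.
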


The extra $-d/24$ in the definition, called the ``vacuum energy shift'' by physicists, is what we need to get a modular form.

We briefly note that the group of vertex algebra automorphisms of $\pi_0$ is the orthogonal group of $\fh$.  The action on the generating vectors $\{ ut^{-1}| u \in \fh\}$ extends naturally to $\Sym(t^{-1}\fh[t^{-1}])$.

We now move on to the lattice construction.  Let $L$ be an even integral lattice spanning a vector space $\fh$.  Then, we wish to produce a vertex algebra structure on the direct sum $\bigoplus_{u \in L} \pi_u$ extending that of $\pi_0$.  There is a degenerate way to do this, where the multiplication map vanishes when restricted to $\pi_u \otimes \pi_v$ for $u,v \neq 0$.  However, the situation where this vanishing never happens is substantially more interesting.

\begin{thm} (\cite{B86}, \cite{FLM88})
Given an even integral lattice $L$ spanning a vector space $\fh$, there exists a unique (up to isomorphism) vertex algebra structure on the direct sum $\bigoplus_{u \in L} \pi_u$, subject to the condition that the multiplication map is nonzero when restricted to $\pi_u \otimes \pi_v$ for any $u,v \in L$.  Furthermore, if $L$ is positive definite, then the vertex operator algebra structure on $\pi_0$ extends to a vertex operator algebra structure on the direct sum.
\end{thm}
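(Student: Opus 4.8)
The plan is to present $V_L := \bigoplus_{u\in L}\pi_u$ as a vertex algebra of mutually local fields and then invoke the reconstruction theorem. I identify the underlying space with $\Sym(t^{-1}\fh[t^{-1}])\otimes\bC[L]$, where $\bC[L]$ has basis $\{e^u\}_{u\in L}$ and $\pi_u$ is the summand $\Sym(t^{-1}\fh[t^{-1}])\otimes e^u$, on which $vt^0$ acts by the scalar $(v,u)$. The generating fields will be the identity field, the $d$ Heisenberg fields $Y(e^it^{-1},z)=\sum_n(e^it^n)z^{-n-1}$, and, for each $\alpha\in L$, a \emph{lattice vertex operator} of the schematic shape
\[ Y(e^\alpha,z)=E^-(\alpha,z)\,E^+(\alpha,z)\,c_\alpha\,z^\alpha, \]
where $E^\mp(\alpha,z)$ are the exponentials of the negative, resp.\ positive, Heisenberg modes of $\alpha$, the operator $z^\alpha$ multiplies $\pi_\beta$ by $z^{(\alpha,\beta)}$, and $c_\alpha$ sends $e^\beta\mapsto\epsilon(\alpha,\beta)e^{\alpha+\beta}$ for a sign $\epsilon(\alpha,\beta)\in\{\pm1\}$ still to be chosen.

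Choosing $\epsilon$ so that these fields are mutually local is the step I expect to be the main obstacle. Commuting the exponentials $E^\mp$ past one another using the Heisenberg relations produces $Y(e^\alpha,z)Y(e^\beta,w)$ and $Y(e^\beta,w)Y(e^\alpha,z)$ with a \emph{common} normally ordered factor, multiplied respectively by $\epsilon(\alpha,\beta)(z-w)^{(\alpha,\beta)}$ and $\epsilon(\beta,\alpha)(w-z)^{(\alpha,\beta)}$. Since $(w-z)^{(\alpha,\beta)}=(-1)^{(\alpha,\beta)}(z-w)^{(\alpha,\beta)}$, the two agree exactly when $\epsilon(\alpha,\beta)=(-1)^{(\alpha,\beta)}\epsilon(\beta,\alpha)$. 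I would therefore take $\epsilon$ to be a $2$-cocycle realizing a central extension $\hat L$ of $L$ by $\{\pm1\}$ with commutator pairing $(-1)^{(\alpha,\beta)}$; such an $\epsilon$ exists because one may fix a $\bZ$-basis of $L$ and set $\epsilon$ from a bilinear form $B$ with $B(\alpha,\beta)-B(\beta,\alpha)\equiv(\alpha,\beta)\bmod 2$. Integrality of $L$ guarantees $(\alpha,\beta)\in\bZ$, so the common factor has a pole of finite order and the difference of its two expansions is a finite sum of derivatives of $\delta(z-w)$; multiplying by $(z-w)^N$ with $N\geq\max(0,-(\alpha,\beta))$ then kills the commutator. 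Locality among Heisenberg fields is already known, and locality between a Heisenberg field and $Y(e^\alpha,z)$ is an easier direct computation.

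With locality established, I would verify the remaining hypotheses of the reconstruction theorem: the creation property $A^i(z)\unit\in A^i+zV_L[[z]]$ (immediate from the mode structure together with $c_\alpha e^0=e^\alpha$); the existence of a translation operator $T$ with $T\unit=0$ and $[T,A^i(z)]=\partial_z A^i(z)$ (take $T$ to be the derivation with $[T,vt^n]=-n\,vt^{n-1}$ and $Te^\alpha=(\alpha t^{-1})e^\alpha$); and spanning, which holds since the Heisenberg modes generate each $\pi_u$ from $e^u$ while $Y(e^u,z)\unit$ already has leading coefficient $e^u$. The reconstruction theorem then yields the vertex algebra, and the multiplication is nonzero on every $\pi_u\otimes\pi_v$ because the leading coefficient of $Y(e^u,z)e^v$ is $\epsilon(u,v)e^{u+v}\neq0$. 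For uniqueness up to isomorphism, the nonvanishing hypothesis forces the leading term of any admissible product $Y(x,z)y$ with $x\in\pi_u$, $y\in\pi_v$ to lie in $\pi_{u+v}$ with nonzero scalar; skew symmetry and associativity constrain these scalars to a $2$-cocycle whose commutator pairing is pinned to $(-1)^{(u,v)}$ by the Heisenberg relations, and any two such cocycles differ by a coboundary. Rescaling the generators $e^u$ by the corresponding $1$-cochain is then a vertex algebra isomorphism, giving uniqueness.

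Finally, for positive definite $L$ I would take $\omega=\tfrac12\sum_i(e^it^{-1})^2\unit\in\pi_0$ from the Heisenberg vertex operator algebra. Since $\omega\in\pi_0$ and the Virasoro relations for $Y(\omega,z)$ follow from the vertex algebra axioms, $V_L$ automatically becomes a representation of the Virasoro algebra of central charge $d=\rank L$, and $L_0\unit=0$, $L_0\omega=2\omega$, $L_{-1}=T$ hold as in $\pi_0$. It remains to check the grading conditions: the lowest $L_0$-eigenvalue on $\pi_u$ is the conformal weight $(u,u)/2$ of $e^u$, which is a nonnegative integer because $L$ is even (so $(u,u)\in2\bZ$) and positive definite (so $(u,u)\geq0$). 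Hence $L_0$ acts with integer eigenvalues bounded below, and each eigenspace is finite dimensional because positive-definiteness makes $\{u:(u,u)/2\leq n\}$ finite while the Heisenberg contribution in each degree is finite. This completes the vertex operator algebra axioms.
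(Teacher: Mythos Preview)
Your proposal is correct and follows essentially the same approach the paper sketches (and attributes to \cite{FBZ04}): define the lattice vertex operators $Y(e^\alpha,z)$, show that mutual locality forces the signs $\epsilon(\alpha,\beta)$ to satisfy the commutator relation $\epsilon(\alpha,\beta)=(-1)^{(\alpha,\beta)}\epsilon(\beta,\alpha)$ of a $\{\pm1\}$-central extension $\hat L$, and then apply the reconstruction theorem. You supply considerably more detail than the paper's own discussion, including the uniqueness-via-coboundary argument and the verification of the VOA grading axioms in the positive definite case, all of which are standard and correctly outlined.
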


A proof of this theorem using reconstruction can be found in Chapter 5 of \cite{FBZ04}.  It suffices to define a suitable field $Y(v_u,z)$ for the generating vector $v_u$ of each irreducible Heisenberg module $\pi_u$.  However, such a field is uniquely determined up to a scalar by the requirements that $Y(v_u,z)$ take $\pi_v$ to $\pi_{v+u}((z))$ for all $v \in L$, and that $Y(v_u,z)$ be mutually local with the fields from $\pi_0$.  We will not describe the field in detail here, but we remark that locality imposes a nontrivial condition on the undetermined scalars: We may choose to take them from $\pm 1$, but we have to choose signs in a way that corresponds to constructing a central extension $\hat{L}$ of the lattice by the group $\{ \pm 1\}$.  We will write $V_L$ for the lattice vertex algebra.



\begin{prop}
Let $L$ be an even integral lattice of rank $d$.  Then, $V_L$ is $\bZ$-graded with weight given by eigenvalues of $L_0$.  If $L$ is positive definite, then the weights are non-negative, and the character of $V_L$, defined as $\sum_{n \geq 0} \dim (V_L)_n q^{n-d/24}$, is equal to the modular function $\frac{\theta_L(\tau)}{\eta(\tau)^d}$, for $\tau$ in the complex upper half plane, where $q = e^{2\pi i \tau}$.  Here, $\theta_L(\tau) =  \sum_{\alpha \in L} q^{(\alpha,\alpha)/2}$ is a modular form of weight $d/2$, and $\eta(\tau) = q^{1/24}\prod_{n \geq 1} (1-q^n)$ is a modular form of weight $1/2$.
\end{prop}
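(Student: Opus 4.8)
The plan is to exploit the direct-sum decomposition $V_L = \bigoplus_{u \in L} \pi_u$ and to compute the $L_0$-grading on each summand separately, reducing everything to the already-known character of $\pi_0$. Writing $\alpha^i_n = e^i t^n$ for the modes of an orthonormal basis $e^1,\dots,e^d$ of $\fh$, I first pin down the $L_0$-action. The conformal vector $\omega = \tfrac12\sum_i (e^i t^{-1})^2\unit$ equals $\tfrac12\sum_i (e^i t^{-1})_{-1}(e^i t^{-1})$, so the associativity axiom $Y(u_nv,z)=Y(u,z)_nY(v,z)$ yields the Sugawara field $Y(\omega,z) = \tfrac12\sum_i Y(e^it^{-1},z)_{-1}Y(e^it^{-1},z)$, whose $z^{-2}$-coefficient is $L_0$. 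From this I only need two consequences. First, $L_0 v_u = \tfrac12(u,u)\,v_u$: since $\alpha^i_m v_u = 0$ for $m>0$ and $\alpha^i_0 v_u = (e^i,u)v_u$, the zero mode acts on $v_u$ as $\tfrac12\sum_i(\alpha^i_0)^2 = \tfrac12\sum_i(e^i,u)^2 = \tfrac12(u,u)$. Second, $[L_0,\alpha^i_n] = -n\,\alpha^i_n$, the standard grading relation for the modes of the weight-one vectors $e^it^{-1}$, obtained from the commutator formula in the list following the Jacobi identity. Consequently a monomial $\alpha^{i_1}_{n_1}\cdots\alpha^{i_k}_{n_k}v_u$ with all $n_j<0$ is an $L_0$-eigenvector of eigenvalue $\tfrac12(u,u) + \sum_j|n_j|$.

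This identifies $\pi_u$, as a graded vector space, with $\pi_0$ shifted upward by $\tfrac12(u,u)$: the creation operators $\alpha^i_{-m}$ generate exactly the same free $\Sym(t^{-1}\fh[t^{-1}])$-module over the line $\bC v_u$ as they do over $\bC\unit$, only with the bottom degree moved from $0$ to $\tfrac12(u,u)$. Because $L$ is even, $\tfrac12(u,u)\in\bZ$ for every $u$, so all eigenvalues lie in $\bZ$ and $V_L$ is $\bZ$-graded. When $L$ is positive definite we have $(u,u)\ge 0$, so the weights are non-negative; moreover only finitely many $u\in L$ lie below any fixed norm, so each weight space receives contributions from finitely many summands and is finite-dimensional, making the character well defined.

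It then remains to sum. The degree shift gives $\operatorname{ch}\pi_u = q^{(u,u)/2}\operatorname{ch}\pi_0 = q^{(u,u)/2}\eta(\tau)^{-d}$, invoking the earlier lemma that $\operatorname{ch}\pi_0 = \eta(\tau)^{-d}$; summing over $u\in L$ and recognizing the theta series yields
\[
\operatorname{ch} V_L = \sum_{u\in L} q^{(u,u)/2}\,\eta(\tau)^{-d} = \eta(\tau)^{-d}\sum_{u\in L}q^{(u,u)/2} = \frac{\theta_L(\tau)}{\eta(\tau)^{d}}.
\]
The modularity assertion is inherited rather than proved afresh: $\theta_L$ has weight $d/2$ and $\eta^d$ has weight $d/2$, so their ratio is a weight-zero modular function.

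The only genuinely computational step is the Sugawara identification of $L_0$ and the verification of $[L_0,\alpha^i_n]=-n\alpha^i_n$; everything afterward is bookkeeping on graded dimensions. I expect the main obstacle to be nothing deeper than justifying the interchange of the two summations (over $u\in L$ and over the Fock monomials) and the normal convergence of the resulting $q$-expansion, both of which are guaranteed by positive-definiteness through the finiteness of lattice vectors of bounded norm.
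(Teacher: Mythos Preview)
The paper does not supply a proof of this proposition; it is stated as a standard fact, with the earlier lemma on $\operatorname{ch}\pi_0 = \eta(\tau)^{-d}$ serving as the only preparation. Your argument is correct and is exactly the computation the paper's setup invites: decompose $V_L$ as $\bigoplus_{u\in L}\pi_u$, identify $L_0$ via the Sugawara construction so that each $\pi_u$ is a copy of $\pi_0$ shifted by $\tfrac12(u,u)$, and sum the resulting graded dimensions to obtain $\theta_L/\eta^d$. There is nothing to compare beyond this; your proof simply fills in what the paper leaves implicit.
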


We briefly consider automorphism groups of lattice vertex algebras, and here, we must restrict our view to positive definite lattices to ensure the groups are finite dimensional.  The reason is that if $h$ is a weight 1 vector, then $h_0$ is a derivation, i.e., an infinitesimal automorphism, and the weight 1 space is finite dimensional if and only if the lattice is positive definite.  

\begin{thm} (\cite{DN99}) \label{thm:automorphism-group-of-lattice-voa}
Let $L$ be a positive definite even lattice, and let $V_L$ denote the vertex operator algebra attached to $L$.  Then, the automorphism group of $V_L$ is a complex Lie group of the form $GO$, where:
\begin{enumerate}
\item $G$ is the connected component of identity, which is in particular a normal subgroup.  $G$ has a natural structure of a complex Lie group whose Lie algebra is isomorphic to the weight 1 subspace of $V_L$, with Lie bracket $[u,v] = u_0 v$.  That is, $G$ is generated by $e^{h_0}$ for $h$ in the weight 1 subspace.
\item $O$ is the finite group of orthogonal transformations of a double cover $\pi:\hat{L} \to L$.  The precise structure of the double cover is uniquely determined by the commutator relation $[a,b] = (-1)^{(\pi(a),\pi(b))}$ for all $a,b \in \hat{L}$.  The group $O$ lies in an exact sequence $1 \to \Hom(L,\pm 1) \to O \to \Aut L \to 1$.
\item The intersection of $G$ and $O$ contains a copy of $\Hom(L, \pm 1)$ lying in the distinguished maximal torus (generated by $\{e^{h_0}, h \in \pi_0\}$) of $G$.
\end{enumerate}
\end{thm}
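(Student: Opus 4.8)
The plan is to prove Theorem~\ref{thm:automorphism-group-of-lattice-voa} by first pinning down the Lie algebra of $\Aut V_L$, using that to identify the identity component $G$, and then analyzing the component group via the action on the Heisenberg subalgebra. The starting point is the observation already made before the statement: any automorphism preserves the conformal grading (since it fixes $\omega$, which is canonically determined), so it preserves the weight~$1$ subspace $(V_L)_1$. For a positive definite lattice this space is $\fh = \fh t^{-1}\unit$ (there are no roots-to-worry-about contributions beyond $\fh$ when... actually there may be, but the weight~$1$ space is exactly $\fh t^{-1}\unit \oplus \bigoplus_{(\alpha,\alpha)=2}\pi_\alpha$-pieces; I will need to treat the abelian part carefully). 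The bracket $[u,v]=u_0v$ makes $(V_L)_1$ a finite-dimensional Lie algebra, and an automorphism acts on it by a Lie algebra automorphism; this realizes $\Aut V_L$ as a closed subgroup of $GL((V_L)_1)$, hence a complex Lie group with $\Lie(\Aut V_L) = \{u_0 : u \in (V_L)_1\}$ acting as derivations.

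First I would establish that the infinitesimal automorphisms are exactly $\{h_0 : h \in (V_L)_1\}$. The inclusion $\supseteq$ is the commutator formula: each $h_0$ is a derivation of all residue products, so $e^{h_0}$ is an automorphism (convergence holds because $h_0$ acts locally nilpotently on each graded piece, as $h_0$ lowers... preserves weight but the nilpotency comes from the finite-dimensionality of weight spaces and the structure of $h_0$ on them). The reverse inclusion $\subseteq$ requires showing every derivation of $V_L$ preserving the grading is inner of this form; this follows because a grading-preserving derivation $D$ is determined by its restriction to the generating set, $D$ commutes with $T = L_{-1}$, and the reconstruction theorem shows $V_L$ is generated by $(V_L)_1$ together with the $\pi_\alpha$, so $D$ is detected on low weights. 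This identifies $G$ as the connected complex Lie group with Lie algebra $(V_L)_1$, generated by the $e^{h_0}$.

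Next I would analyze the component group $\Aut V_L / G$. The key is that $G$ acts transitively on the set of ``Heisenberg data'', and that modulo $G$ an automorphism is forced to permute the one-dimensional spaces $\pi_\alpha$ and act on the lattice. Concretely, an automorphism $\phi$ preserves the distinguished Cartan $\fh \subseteq (V_L)_1$ up to $G$-conjugacy (all maximal tori are conjugate), so after multiplying by an element of $G$ I may assume $\phi$ preserves $\fh$, hence acts on $L$ via some $\sigma \in \Aut L = O(L,(\cdot,\cdot))$, and on the $\pi_\alpha$ compatibly. The residual freedom---automorphisms fixing $\fh$ pointwise and hence fixing the grading by $L$-degree---acts on each $\pi_\alpha$ by a scalar $\chi(\alpha)$, and compatibility with the vertex algebra product forces $\chi \in \Hom(L,\pm 1)$ up to the torus. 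This is precisely the content of the central extension $\hat L \to L$: the signs in the defining cocycle for $Y(v_\alpha,z)$ mean that the lattice-fixing automorphisms are exactly the group of sign-changes respecting the commutator relation $[a,b] = (-1)^{(\pi(a),\pi(b))}$, which is $O$ sitting in $1 \to \Hom(L,\pm 1) \to O \to \Aut L \to 1$, and the toral copy of $\Hom(L,\pm 1)$ is realized inside $G$ via $e^{h_0}$ for $h$ with $e^{2\pi i (\cdot\,,h)} = \pm 1$ on $L$.

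\textbf{The main obstacle} I expect is part~(2), showing that the component group is genuinely the orthogonal group of the double cover $\hat L$ rather than merely a subgroup of $\Aut L$: one must prove that every lattice automorphism $\sigma \in \Aut L$ actually lifts to a vertex algebra automorphism, and control the ambiguity in the lift. This is the heart of the matter and is where the double cover structure is forced---the lift of $\sigma$ requires choosing a compatible automorphism of $\hat L$ covering $\sigma$, and the two choices of sign-data differ by $\Hom(L,\pm 1)$, giving exactly the stated exact sequence. I would handle this by constructing the lift explicitly on generators $v_\alpha \mapsto \pm v_{\sigma\alpha}$ and checking that locality (equivalently, the $\hat L$ cocycle) is preserved, then verifying that $G \cap O$ contains $\Hom(L,\pm 1)$ by exhibiting the characters as $e^{h_0}$ with $h$ in the dual lattice scaled appropriately, completing the semidirect-type description $\Aut V_L = GO$ with $G$ normal.
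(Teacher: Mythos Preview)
The paper does not prove this theorem: it is stated with a citation to \cite{DN99} and used as a black box thereafter, so there is no proof in the paper to compare your proposal against.  Your outline follows the standard Dong--Nagatomo strategy (identify the derivation algebra as $(V_L)_1$ acting by $h\mapsto h_0$, exponentiate to obtain $G$, then reduce modulo $G$ to the normalizer of the Heisenberg Cartan and read off the lattice action), so in spirit it matches the cited source.

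Two technical points in your write-up are off.  First, the restriction map $\Aut V_L \to GL((V_L)_1)$ is not an embedding in general: when $L$ has no roots (e.g.\ $L=\Lambda$) the entire torus $\Hom(L,\bC^\times)$ acts trivially on $(V_L)_1=\fh$, so you cannot realize $\Aut V_L$ as a closed subgroup of $GL((V_L)_1)$ this way.  The correct move is to use finite generation of $V_L$ and embed into $GL$ of a larger $\Aut$-stable generating subspace (as the paper does for $V^\natural$ in the finiteness proof).  Second, your ``residual freedom'' paragraph conflates two distinct subgroups.  Automorphisms fixing $\fh$ pointwise act on each $\pi_\alpha$ by a scalar $\chi(\alpha)$ with $\chi\in\Hom(L,\bC^\times)$; this is exactly the maximal torus and lies entirely in $G$, not ``$\Hom(L,\pm 1)$ up to the torus''.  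The group $O$ is not the $\fh$-fixing subgroup but the group $\Aut(\hat L)$ of automorphisms of the central extension, acting by signed permutations $v_\alpha\mapsto \pm v_{\sigma\alpha}$; the exact sequence $1\to\Hom(L,\pm1)\to O\to\Aut L\to 1$ records the ambiguity in lifting $\sigma\in\Aut L$ to $\hat L$, and the subgroup $\Hom(L,\pm1)\subset O$ happens to coincide with the $2$-torsion of the torus, which is what places it inside $G\cap O$.  Keeping these two roles separate makes the decomposition $\Aut V_L = GO$ come out cleanly.
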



As we mentioned earlier, our main examples for the lattice $L$ are the following positive definite even unimodular lattices of rank 24:
\begin{enumerate}
\item The lattice $N(A_1^{24})$ with root system $A_1^{24}$.
\item The Leech lattice $\Lambda$, with no roots. 
\end{enumerate}

\begin{thm}
The automorphism group of $V_\Lambda$ has the form $(\bC^\times)^{24}.Co_0$.
\end{thm}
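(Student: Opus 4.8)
The plan is to apply Theorem~\ref{thm:automorphism-group-of-lattice-voa} to $L = \Lambda$ and then pin down the two factors using the special geometry of the Leech lattice. By that theorem, $\Aut V_\Lambda = GO$, where $G$ is the identity component, whose Lie algebra is the weight $1$ subspace $(V_\Lambda)_1$ under the bracket $[u,v] = u_0v$, and where $O$ sits in the exact sequence $1 \to \Hom(\Lambda,\pm 1) \to O \to \Aut\Lambda \to 1$. Since $\Aut\Lambda = Co_0$ by definition, the goal reduces to showing that $G \cong (\bC^\times)^{24}$ is normal with quotient $Co_0$. First I would compute $(V_\Lambda)_1$: weight $1$ vectors arise either from the rank-$24$ Heisenberg part, spanned by $\{ht^{-1}\unit : h \in \fh\}$, or from the generating vectors $v_\alpha$ of $\pi_\alpha$ with $(\alpha,\alpha) = 2$, i.e. roots. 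Because $\Lambda$ has no roots, the weight $1$ space is exactly the $24$-dimensional space $\fh$, and a short computation with the Heisenberg bracket shows $(ht^{-1}\unit)_0$ annihilates $\fh$, so this Lie algebra is abelian. Hence $G$ is a connected abelian complex Lie group of dimension $24$, equal to the distinguished maximal torus $\{e^{h_0} : h \in \fh\}$.

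Next I would identify $G$ as an algebraic torus. The exponential $\fh \to G$, $h \mapsto e^{h_0}$, is a surjection of complex Lie groups, and $e^{h_0}$ acts on each $\pi_\alpha$ by the scalar $e^{(h,\alpha)}$. Thus $e^{h_0} = 1$ if and only if $(h,\alpha) \in 2\pi i\bZ$ for all $\alpha \in \Lambda$; writing $h = x + iy$ this forces $x = 0$ and $y \in 2\pi\Lambda^\ast$. Here the unimodularity of $\Lambda$ enters: since $\Lambda^\ast = \Lambda$, the kernel is the full-rank lattice $2\pi i\Lambda$, giving $G \cong \fh/2\pi i\Lambda \cong (\bC^\times)^{24}$.

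Finally I would compute the quotient. As the identity component, $G$ is normal, so $(\Aut V_\Lambda)/G \cong O/(O \cap G)$. The inclusion $\Hom(\Lambda,\pm 1) \subseteq O \cap G$ is part of Theorem~\ref{thm:automorphism-group-of-lattice-voa}; for the reverse, note that $G$ acts trivially on $\fh$, whereas any element of $O$ acts on $\fh = \Lambda \otimes \bC$ through its image in $\Aut\Lambda$. Since $\Aut\Lambda$ acts faithfully on $\fh$, an element of $O \cap G$ must map to the identity of $\Aut\Lambda$ and hence lies in $\Hom(\Lambda,\pm 1)$. Therefore $O \cap G = \Hom(\Lambda,\pm 1)$ and $(\Aut V_\Lambda)/G \cong O/\Hom(\Lambda,\pm 1) \cong \Aut\Lambda = Co_0$, exhibiting $\Aut V_\Lambda$ as $(\bC^\times)^{24}.Co_0$.

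The main obstacle is the passage from the abstract description of $G$ in Theorem~\ref{thm:automorphism-group-of-lattice-voa} to the concrete identification $G \cong (\bC^\times)^{24}$: one must correctly compute the kernel of the exponential and invoke unimodularity to see that the quotient is an algebraic torus, rather than a vector group or a torus of smaller compact rank. The remaining steps are essentially bookkeeping, provided one is careful that the weight $1$ space receives no contribution from roots — which is precisely the defining feature of $\Lambda$ and the reason the entire identity component collapses to a single torus.
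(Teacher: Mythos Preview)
Your proposal is correct and follows essentially the same route as the paper: apply Theorem~\ref{thm:automorphism-group-of-lattice-voa}, use the absence of roots in $\Lambda$ to identify the weight~$1$ space as the abelian Lie algebra $\fh$, recognize $G$ as the torus $\Hom(\Lambda,\bC^\times)\cong(\bC^\times)^{24}$ via the scalar action on the Heisenberg modules, and pass to the quotient by $G$ using $G\cap O=\Hom(\Lambda,\pm 1)$. Your version is simply more explicit --- you compute the kernel of the exponential and justify the equality $G\cap O=\Hom(\Lambda,\pm 1)$ (the paper's proof asserts the latter without argument, and Theorem~\ref{thm:automorphism-group-of-lattice-voa} only states containment). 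One minor remark: the appeal to unimodularity is not actually needed to conclude $G\cong(\bC^\times)^{24}$, since for any positive definite even lattice $L$ the kernel $2\pi i L^\vee$ is already full rank; it only serves to name the kernel as $2\pi i\Lambda$ rather than $2\pi i\Lambda^\vee$.
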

\begin{proof}
This follows from Theorem \ref{thm:automorphism-group-of-lattice-voa} as follows: the connected component of identity $G$ is infinitesimally generated by the weight 1 space, which is the rank 24 abelian Lie algebra $\Lambda \otimes \bC$.  $G$ therefore acts as scalars on the $\Lambda$-graded Heisenberg modules by the identification $\Hom(\Lambda, \bC^\times) \cong (\bC^\times)^{24}$.  The subgroup $O$ is a $2^{24}$-cover of $\Aut \Lambda = Co_0$, and $G \cap O$ is precisely $\Hom(\Lambda,\pm 1)$, which is identified with the $2^{24}$ subgroup.  Thus, $(\Aut V_\Lambda)/G \cong Co_0$.
\end{proof}

\begin{thm}
The automorphism group of the vertex algebra of $N(A_1^{24})$ has the form $GO$, where $G$ is the quotient of $(SL_2(\bC))^{24}$ by a copy of the Golay code in the center $(\pm I_2)^{24}$, and $O$ has the form $2^{24}.2^{24}.M_{24}$.  The copy of the Golay code is precisely the one whose codewords describe the cosets of $(\sqrt{2}\bZ)^{24}$ in $N(A_1^{24})$.

\end{thm}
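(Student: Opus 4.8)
The plan is to specialize the general description of $\Aut V_L$ from Theorem~\ref{thm:automorphism-group-of-lattice-voa} to $L = N(A_1^{24})$, computing the two factors $G$ and $O$ separately. For the identity component $G$, I would first identify its Lie algebra, which by that theorem is the weight $1$ subspace of $V_L$ under the bracket $[u,v] = u_0 v$. This subspace is $\fh \oplus \bigoplus_{(\alpha,\alpha)=2} \bC v_\alpha$, namely the Cartan subalgebra $\fh = L\otimes\bC$ together with one vector $v_\alpha$ for each of the $48$ roots. Since the roots form an $A_1^{24}$ system and already span $\fh$, there is no extra central torus, and grouping each pair $\pm\alpha_i$ with its coroot exhibits the weight $1$ Lie algebra as $\fsl_2^{\oplus 24}$ (of dimension $24 + 48 = 72$). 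As $SL_2(\bC)$ is simply connected, $G$ is therefore a quotient of $SL_2(\bC)^{24}$ by a discrete --- hence central, hence contained in $(\pm I_2)^{24}$ --- subgroup, namely the kernel $K$ of the resulting homomorphism $SL_2(\bC)^{24} \to \Aut V_L$.

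The heart of the argument is the computation of $K$. In the $i$-th factor the coroot of $\alpha_i$ is $\alpha_i$ itself (as $(\alpha_i,\alpha_i)=2$), and the zero mode $(\alpha_i t^{-1})_0$ acts on the Heisenberg module $\pi_u$ as the scalar $(\alpha_i,u)$, commuting with all creation operators. Consequently the central element $-I_2$ of the $i$-th factor, being $\exp(\pi i (\alpha_i t^{-1})_0)$, acts on $\pi_u$ by $(-1)^{(\alpha_i,u)}$, and a product $\prod_{i\in S}(-I_2)$ over a subset $S$ of the $24$ indices acts by $(-1)^{(\alpha_S,u)}$ with $\alpha_S = \sum_{i\in S}\alpha_i$. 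This product lies in $K$ exactly when $(\alpha_S,u)\in 2\bZ$ for every $u\in L$, i.e. when $\tfrac12\alpha_S \in L^\ast = L$ (using unimodularity). Since $\tfrac12\alpha_S = \sum_{i\in S} e_i$ maps to the characteristic vector of $S$ under $L \hookrightarrow (\tfrac1{\sqrt2}\bZ)^{24} \twoheadrightarrow (\bZ/2\bZ)^{24}$, this condition says precisely that $S$ is a Golay codeword. Hence $K$ is the copy of $\cC$ describing the cosets of $(\sqrt2\bZ)^{24}$ in $N(A_1^{24})$, and $G \cong SL_2(\bC)^{24}/\cC$.

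For the finite factor $O$, Theorem~\ref{thm:automorphism-group-of-lattice-voa} supplies the exact sequence $1 \to \Hom(L,\pm 1) \to O \to \Aut L \to 1$. Here $\Hom(L,\pm 1)\cong (\bZ/2\bZ)^{24}$ since $\rank L = 24$, and $\Aut N(A_1^{24})$ is of type $2^{24}.M_{24}$ as recorded earlier (the Weyl group $2^{24}$ of root reflections, with quotient $M_{24}$ given by the Golay-code-preserving basis permutations). Splicing these two extensions yields $O$ of type $2^{24}.2^{24}.M_{24}$, completing the identification $\Aut V_{N(A_1^{24})} = GO$.

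The step I expect to be the main obstacle is the kernel computation: one must keep the normalizations straight among the root $\alpha_i$, its coroot, the central involution $-I_2 = \exp(\pi i\,\alpha_i^\vee)$ of each $SL_2$, and the scalar $(-1)^{(\alpha_i,u)}$ by which it acts, and then correctly translate the condition $\tfrac12\alpha_S\in L$ into membership in the Golay code via the quotient $L/(\sqrt2\bZ)^{24}\cong\cC$. Everything else is assembling known structural facts, but this bookkeeping is where a stray sign or factor of two would derail the identification of $G$.
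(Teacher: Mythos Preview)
Your proof is correct and follows essentially the same route as the paper: identify $\Lie G=\fsl_2^{24}$ from the root system, compute the central kernel in $SL_2(\bC)^{24}$ by evaluating the action of $(\pm I_2)^{24}$ on each $\pi_u$, and read off $O$ from the exact sequence $1\to\Hom(L,\pm1)\to O\to\Aut L\to 1$. The only cosmetic difference is that you phrase the kernel condition as $\tfrac12\alpha_S\in L^\vee=L$ via unimodularity, whereas the paper phrases it as $|S\cap c|$ even for all codewords $c$ via self-duality of $\cC$; these are equivalent.
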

\begin{proof}
Because the root system of $N(A_1^{24})$ is $A_1^{24}$, the Lie algebra of $G$ is $\fsl_2^{24}$.  Thus, the connected component of identity is a quotient of $SL_2(\bC)^{24}$ by a subgroup of its center.

Given $u = (u^1,\ldots,u^{24}) \in (\frac{1}{\sqrt{2}}\bZ)^{24}$, a torus element $t = (t_1,\ldots,t_{24}) \in (\bC^\times)^{24}$ acts on $\pi_u$ by the scalar $\prod_{a=1}^{24} t_a^{\sqrt{2}u^a}$.  If we specify $t_i \in \pm 1$, we find that the scalar is uniformly identity if and only if we have an even number of $-1$s for each Golay codeword.  Equivalently, the subset of $-1$ indices in $\{1,\ldots,24\}$ is orthogonal to $\cC$.  Because $\cC$ is self-orthogonal, it is necessary and sufficient that this subset is an element of $\cC$ itself.  This proves the claim about $G$.

$O$ is the orthogonal group of the double cover of $N(A_1^{24})$, hence it lies in an exact sequence
\[ 1 \to (\pm 1)^{24} \to O \to \Aut N(A_1^{24}) \to 1 \]
This proves the claim about $O$.
\end{proof}

\begin{defn}
Given an automorphism $\sigma$ of a lattice $L$, we say an automorphism $\tilde{\sigma}$ of $V_L$ is a \textbf{lift} of $\sigma$ if $\tilde{\sigma}$ sends vectors in $\pi_u$ to $\pi_{\sigma(u)}$ for all $u \in L$.
\end{defn}

In general the obstruction to an automorphism $g$ of order $n$ of a lattice admitting an order $n$ lift is $n$ being even and $(v,g^{n/2}v)$ being odd for some lattice vector $v$ \cite{B92}.  In particular, this obstruction vanishes for the $-1$ automorphism of an even lattice, so $-1$ always admits an order 2 lift.

\subsection{The diagonal octahedral group}

We introduce a particular subgroup of automorphisms of $V_{N(A_1^{24})}$ that is isomorphic to $S_4$, and most importantly, a pair of conjugate involutions within this group.

\begin{defn}
The \textbf{binary octahedral group} $2O$ is the following group of 48 unit quaternions:
\[ 2O = \{ \pm x, \frac{\pm 1 \pm i \pm j \pm k}{2}, \frac{\pm x \pm y}{\sqrt{2}} | x,y \in \{1,i,j,k\}, x \neq y\}. \]
We also view this group as a subgroup of $SL_2(\bC)$ by fixing once and for all the linear representation $i \mapsto \begin{pmatrix} i & 0 \\ 0 & -i \end{pmatrix}$, $j \mapsto \begin{pmatrix} 0 & i \\ i & 0 \end{pmatrix}$, $k \mapsto \begin{pmatrix} 0 & -1 \\ 1 & 0 \end{pmatrix}$.
\end{defn}

The binary octahedral group is the normalizer of the subgroup $Q_8 = \{ \pm 1, \pm i, \pm j, \pm k\}$ in the unit quaternions.  A few short calculations yield the following:

\begin{lem} \label{lem:adjoint-action-of-binary-octahedral}
Under the adjoint representation of $SL_2(\bC)$ on its Lie algebra, the quaternion $i$ acts as identity on diagonal matrices and $-1$ on the span of $E = \begin{pmatrix} 0 & 1 \\ 0 & 0 \end{pmatrix}$ and $F = \begin{pmatrix} 0 & 0 \\ 1 & 0 \end{pmatrix}$. The quaterion $j$ acts as $-1$ on diagonal matrices, and exchanges $E$ with $F$.  The images of the quaternions $i, j, k$ in $\Aut \fsl_2$ have order 2, and are the non-identity elements in a group $V_4$ of order 4.  The image of the binary octahedral group is isomorphic to $S_4$, which naturally permutes the elements of $V_4$, and yields an identification with the holomorph $V_4 \rtimes \Aut V_4$.
\end{lem}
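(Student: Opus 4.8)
The plan is to reduce the entire lemma to a handful of explicit conjugations in $SL_2(\bC)$ and then recognize the resulting group of automorphisms abstractly. Throughout I write $\Ad(g)X = gXg^{-1}$ and work in the standard basis $H = \left(\begin{smallmatrix} 1 & 0 \\ 0 & -1 \end{smallmatrix}\right), E, F$ of $\fsl_2$, with $E,F$ as in the statement, so that the span of $H$ is the space of diagonal traceless matrices.

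First I would verify the two displayed sentences by direct matrix multiplication. Under the fixed embedding, $i = \left(\begin{smallmatrix} i & 0 \\ 0 & -i\end{smallmatrix}\right)$ is diagonal, hence $\Ad(i)$ fixes $H$ and sends $E \mapsto i^2 E = -E$ and $F \mapsto -F$; and $j = \left(\begin{smallmatrix} 0 & i \\ i & 0\end{smallmatrix}\right)$ gives $\Ad(j)H = -H$ together with $\Ad(j)E = F$ and $\Ad(j)F = E$. Since $k = ij$ in the quaternions, $\Ad(k) = \Ad(i)\Ad(j)$ is then forced ($H \mapsto -H$, $E \mapsto -F$, $F \mapsto -E$) and needs no separate computation. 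This is the only calculational step, and it is routine.

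Next I would identify the Klein four group. Each of $\Ad(i),\Ad(j),\Ad(k)$ squares to $\Ad(i^2) = \Ad(-1) = \id$, because $-1$ maps to the central matrix $-I_2$; and $\Ad(i)\Ad(j) = \Ad(k)$ (the anticommuting sign in $Q_8$ dies under $\Ad$) shows that $\{\id,\Ad(i),\Ad(j),\Ad(k)\}$ is closed under composition and commutative, hence a group $V_4$. Since $\ker \Ad = Z(SL_2(\bC)) = \{\pm I_2\}$, this $V_4$ is exactly the image $\Ad(Q_8) \cong Q_8/\{\pm 1\}$. Finally, for the image of $2O$: the kernel of $\Ad$ restricted to $2O$ is $2O \cap \{\pm I_2\} = \{\pm 1\}$, so $\Ad(2O) \cong 2O/\{\pm 1\}$, which is the octahedral group $S_4$. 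A clean self-contained justification of the latter is that $Q_8$ acts irreducibly on $\bC^2$, so by Schur's lemma $C_{SL_2(\bC)}(Q_8) = \{\pm I_2\}$; conjugation then embeds $2O/\{\pm 1\}$ into $\Aut Q_8 \cong S_4$, and the orders match at $24$, forcing an isomorphism. Because $2O$ is by definition the normalizer of $Q_8$, the subgroup $Q_8$ is normal, so $V_4 = \Ad(Q_8)$ is normal in $\Ad(2O) = S_4$; conjugation by $2O$ permutes the three directions $i,j,k$ up to sign, hence permutes the three nonidentity elements of $V_4$, giving $S_4 \to \Aut V_4 \cong S_3$, which is onto since $2O/Q_8 \cong S_3$ acts as the full symmetric group on $\{i,j,k\}$. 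Its kernel is $V_4$, and a point stabilizer $S_3 \subset S_4$ meets $V_4$ trivially with complementary order, so it splits the sequence and exhibits $S_4 \cong V_4 \rtimes \Aut V_4$, the holomorph.

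The main obstacle is this last step: the matrix computations are immediate, but making the holomorph identification airtight requires confirming both that the conjugation action of $\Ad(2O)$ realizes all of $\Aut V_4$ (not a proper subgroup) and that the extension $1 \to V_4 \to S_4 \to \Aut V_4 \to 1$ splits. The cleanest route is through the isomorphisms $2O/\{\pm 1\} \cong \Aut Q_8 \cong S_4$ and the existence of a point stabilizer as an explicit complement; the alternative of building the complement directly in quaternions, by choosing an order-$3$ and an order-$2$ element of $2O$ that generate a copy of $S_3$, is more computational and I would avoid it.
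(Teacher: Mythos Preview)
Your proof is correct and supplies exactly the ``few short calculations'' that the paper alludes to but omits: the paper states this lemma without proof, presenting it as routine verification. Your approach---explicit conjugation on the $H,E,F$ basis, the $\ker\Ad = \{\pm I_2\}$ argument to get $\Ad(2O)\cong 2O/\{\pm 1\}$, and the Schur-lemma route through $\Aut Q_8 \cong S_4$ for the holomorph identification---is the natural way to fill in these details and matches the spirit of the paper's one-line preamble.
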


We may alternatively introduce $2O$ by first considering the 4-group in $SO_3(\bR)$ generated by half-rotations about the $x$-, $y$-, and $z$-axes, defining the octahedral group as its normalizer (or just explicitly as symmetries of an octahedron or cube), and lifting to the double cover $SU(2)$.

\begin{defn}
We write $\rho$ to denote the representation of the binary octahedral group on the vertex operator algebra $V_{N(A_1^{24})}$ by the composite $2O \to SL_2(\bC) \to SL_2(\bC)^{24} \to \Aut V_{N(A_1^{24})}$, where the second map is the diagonal embedding.
\end{defn}

\begin{lem} \label{lem:image-of-binary-octahedral}
The image of the quaternion $-1$ under $\rho$ is identity on $V_{N(A_1^{24})}$.  In particular, the action of the binary octahedral group factors through the quotient $S_4$, and the elements $\rho(i), \rho(j),\rho(k)$ are the mutually conjugate non-identity elements in the normal 4-subgroup of this $S_4$.
\end{lem}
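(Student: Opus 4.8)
\section*{Proof proposal}

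The plan is to reduce the lemma to the explicit description of $\Aut V_{N(A_1^{24})}$ from the preceding theorem together with elementary facts about $2O$; the only real content lies in the first assertion. I would begin by computing the image of $-1 \in 2O$. Under the diagonal embedding $SL_2(\bC) \to SL_2(\bC)^{24}$ the quaternion $-1$, which acts as $-I_2$, maps to the central element $(-I_2,\ldots,-I_2)$. In the notation of the theorem describing $\Aut V_{N(A_1^{24})}$, this is the element of $(\pm I_2)^{24}$ whose set of $-1$ entries is all of $\{1,\ldots,24\}$, i.e. the all-ones vector. That theorem shows such a central element acts trivially precisely when its set of $-1$ entries is a Golay codeword. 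Since the extended Golay code $\cC$ is doubly even and self-dual, every codeword has even weight, so the all-ones vector lies in $\cC^\perp = \cC$; hence $(-I_2,\ldots,-I_2)$ is trivial in $G = SL_2(\bC)^{24}/\cC$, and $\rho(-1)=\id$.

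Given $\rho(-1)=\id$, the factorization through $S_4$ is formal: $\{\pm 1\}$ is the center of $2O$ and the kernel of the double cover $2O \to 2O/\{\pm 1\}\cong S_4$, so $\{\pm 1\}\subseteq\ker\rho$ forces $\rho$ to descend to the octahedral group $S_4$. I would also record that this descent is faithful. The representation $2O \to SL_2(\bC)$ is faithful with preimage of $\{\pm I_2\}$ equal to $\{\pm 1\}$, so the only elements of $2O$ whose diagonal image is central (hence possibly lands in $\cC$) are $\pm 1$; and since $1 \mapsto 0 \in \cC$ while $-1 \mapsto$ all-ones $\in \cC$, the kernel of $\rho$ is exactly $\{\pm 1\}$, so $S_4 \hookrightarrow \Aut V_{N(A_1^{24})}$ is injective and in particular $\rho(i),\rho(j),\rho(k)$ are nontrivial.

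It remains to identify $\rho(i),\rho(j),\rho(k)$. From $\rho(i)^2 = \rho(-1)=\id$ and $\rho(i)\rho(j)=\rho(ij)=\rho(k)$ (and cyclically), the four elements $\id,\rho(i),\rho(j),\rho(k)$ form a Klein four-group, namely the image of the normal subgroup $Q_8 \trianglelefteq 2O$, which is the normal four-subgroup $Q_8/\{\pm 1\}\cong V_4 \trianglelefteq S_4$. Its three nontrivial elements are the double transpositions, which form a single conjugacy class in $S_4$, so $\rho(i),\rho(j),\rho(k)$ are mutually conjugate; equivalently, this is exactly the single-factor statement of Lemma~\ref{lem:adjoint-action-of-binary-octahedral} transported through the diagonal embedding. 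The main obstacle is thus concentrated entirely in the first step, the recognition that the all-ones vector is a Golay codeword; everything downstream is purely formal group theory about $2O$ and $S_4$.
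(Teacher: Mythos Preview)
Your proof is correct and follows essentially the same route as the paper: both identify $\rho(-1)$ with the diagonal central element $(-I_2,\ldots,-I_2)$ and reduce triviality to the evenness of Golay codeword weights, after which the $S_4$ and $V_4$ statements are formal consequences of Lemma~\ref{lem:adjoint-action-of-binary-octahedral}. The only cosmetic difference is that the paper recomputes the scalar $\prod_a(-1)^{\sqrt{2}u^a}$ directly on each $\pi_u$, whereas you quote the already-proved structure theorem for $G$ and phrase the key point as ``the all-ones word lies in $\cC^\perp=\cC$''; your added faithfulness check is a welcome explicitness the paper leaves implicit in its appeal to Lemma~\ref{lem:adjoint-action-of-binary-octahedral}.
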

\begin{proof}
The quaternion $-1$ acts by $(-I_2,\ldots,-I_2) \in SL_2(\bC)^{24}$, which lies in the distinguished torus, so it acts by scalars on each $\pi_u$ for $u \in N(A_1^{24})$.  For any $u = (u^1,\ldots,u^{24}) \in (\frac{1}{\sqrt{2}}\bZ)^{24}$ the scalar is $\prod_{a=1}^{24} (-1)^{\sqrt{2}u^a}$.  For $u \in N(A_1^{24})$, this scalar is 1 by the evenness of the Golay code.  The last claim then follows immediately from Lemma \ref{lem:adjoint-action-of-binary-octahedral}.
\end{proof}

\begin{lem} \label{lem:fixed-point-for-Neimeier}
The quaternion $j$ acts on $V_{N(A_1^{24})}$ as a lift of the $-1$ automorphism on $N(A_1^{24})$.  That is, $j$ stabilizes $\pi_0$, and restricts to isomorphisms $\pi_u \to \pi_{-u}$ for each $u \in N(A_1^{24})$.  The quaternion $i$ acts as identity on the vertex subalgebra $V_{\Lambda_0}$ and acts as $-1$ on $\pi_u$ for $u \in N(A_1^{24}) \setminus \Lambda_0$.  In particular, the fixed point subalgebra $V_{N(A_1^{24})}^i$ is equal to $V_{\Lambda_0}$.
\end{lem}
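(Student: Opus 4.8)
The plan is to treat the two quaternions separately, in each case combining Lemma \ref{lem:adjoint-action-of-binary-octahedral} with the single structural observation that the lattice grading $V_{N(A_1^{24})} = \bigoplus_u \pi_u$ is exactly the weight-space decomposition for the diagonal Cartan torus $T$ inside $\rho(SL_2(\bC))$. Recall from the description of $\Aut V_{N(A_1^{24})}$ that a torus element $(t_1,\dots,t_{24})$ acts on $\pi_u$ by the single scalar $\chi_u = \prod_a t_a^{\sqrt 2 u^a}$, and that distinct $u$ give distinct characters $\chi_u$; hence the $\pi_u$ are precisely the $T$-isotypic components. Everything reduces to recording how $\rho(i)$ and $\rho(j)$ interact with $T$.

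For $j$, I would first note via Lemma \ref{lem:adjoint-action-of-binary-octahedral} that $j$ acts on the Cartan of each $\fsl_2$ by $-1$, so that $\rho(j)$ conjugates $T$ by inversion (a direct $2\times 2$ computation gives $j\,\mathrm{diag}(s,s^{-1})\,j^{-1} = \mathrm{diag}(s^{-1},s)$ in each factor, and $j^2=-I$ is central so conjugation by $j$ and $j^{-1}$ agree). Then for $v \in \pi_u$ and $\tilde t \in T$ one computes $\tilde t\cdot \rho(j)v = \rho(j)(\rho(j)^{-1}\tilde t \rho(j))v = \rho(j)(\tilde t^{-1}\cdot v) = \chi_u(\tilde t)^{-1}\rho(j)v = \chi_{-u}(\tilde t)\,\rho(j)v$, since $\chi_u^{-1} = \chi_{-u}$. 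Thus $\rho(j)v$ lies in the weight space $\pi_{-u}$, so $\rho(j)$ restricts to an isomorphism $\pi_u \to \pi_{-u}$ and in particular stabilizes $\pi_0$; this is exactly the assertion that $\rho(j)$ is a lift of $-1$.

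For $i$, the same lemma shows $i$ is diagonal, hence lies in $T$: concretely it is the torus element with every $t_a$ equal to the imaginary unit, coming from $\mathrm{diag}(i,-i) = \mathrm{diag}(i,i^{-1})$. Therefore $\rho(i)$ acts on $\pi_u$ by the scalar $\prod_a i^{\sqrt 2 u^a} = i^{\sqrt 2 \sum_a u^a} = e^{2\pi i (u,\,\alpha_{\cC}/4)}$, where I use $\tfrac14\alpha_{\cC} = \tfrac{1}{\sqrt 8}(1,\dots,1)$ so that $\sqrt 2\sum_a u^a = 4(u,\tfrac14\alpha_{\cC})$. Since $(u,\tfrac14\alpha_{\cC}) \in \tfrac12\bZ$ for every $u \in N(A_1^{24})$ (as observed in the construction of $\Lambda_0$), this scalar equals $+1$ exactly when $(u,\tfrac14\alpha_{\cC}) \in \bZ$, i.e. $u \in \Lambda_0$, and $-1$ exactly when $(u,\tfrac14\alpha_{\cC}) \in \tfrac12 + \bZ$, i.e. $u \in N(A_1^{24})\setminus\Lambda_0$. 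Because $i$ fixes the Cartan, $\rho(i)$ acts by this one scalar on all of $\pi_u$, not merely on its lowest-weight vector, so its $+1$-eigenspace is $\bigoplus_{u\in\Lambda_0}\pi_u = V_{\Lambda_0}$, giving $V_{N(A_1^{24})}^i = V_{\Lambda_0}$.

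The only content beyond this bookkeeping is the arithmetic matching the character of $\rho(i)$ to membership in $\Lambda_0$, and this is the step I expect to require the most care: one must verify that $(u,\tfrac14\alpha_{\cC})$ is always a half-integer and is a genuine integer precisely on the index-$2$ sublattice $\Lambda_0$. This rests on the double-evenness of the Golay code $\cC$ (codeword weights divisible by $4$), which forces $\sqrt 2\sum_a u^a$ to be even and so keeps the scalar inside $\{\pm 1\}$ rather than producing a fourth root of unity; matching the residue $\bmod\,4$ to the defining condition of $\Lambda_0$ then closes the argument.
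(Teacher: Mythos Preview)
Your proof is correct and follows the same approach as the paper's: both invoke Lemma~\ref{lem:adjoint-action-of-binary-octahedral} to see that $j$ acts as $-1$ on the Cartan (hence permutes the $\pi_u$ by $u\mapsto -u$) and that $i$ lies in the diagonal torus, then compute the scalar $\prod_a i^{\sqrt{2}u^a}$ on $\pi_u$ and identify the kernel with $\Lambda_0$. Your version is simply more explicit---you spell out the weight-space argument for $j$ and rewrite the scalar as $e^{2\pi i(u,\frac{1}{4}\alpha_{\cC})}$ so that the match with the defining condition of $\Lambda_0$ is immediate---whereas the paper just asserts that the map $u\mapsto\prod_a i^{\sqrt{2}u^a}$ has kernel $\Lambda_0$.
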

\begin{proof}
By Lemma \ref{lem:adjoint-action-of-binary-octahedral}, $i$ acts as identity on the Heisenberg subalgebra, and $j$ acts as $-1$.  The claim about $j$ then follows from this property of the action.  Now, recall the action of $i$ is given by $(\begin{pmatrix} i & 0 \\ 0 & -i \end{pmatrix}, \ldots, \begin{pmatrix} i & 0 \\ 0 & -i \end{pmatrix})$, which lies in the torus so for any lattice vector $u = (u^1,\ldots,u^{24}) \in (\frac{1}{\sqrt{2}}\bZ)^{24}$, $\rho(i)$ acts as the scalar $\prod_{a=1}^{24} i^{\sqrt{2} u^a}$ on the corresponding module $\pi_u$.  The map $N(A_1^{24}) \to \pm1$ given by $u \mapsto \prod_{a=1}^{24} i^{\sqrt{2} u^a}$ has kernel equal to $\Lambda_0$.
\end{proof}

\begin{lem} \label{lem:S3-action-on-fixed-points}
The fixed point vertex subalgebra $V_{N(A_1^{24})}^{\rho(Q_8)}$ for the action of the 4-group $\rho(Q_8) = \langle \rho(i),\rho(j) \rangle$ inherits an action of $S_3$ from $V_{N(A_1^{24})}$.
\end{lem}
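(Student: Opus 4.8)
The plan is to realize the desired $S_3$ as the quotient $\rho(2O)/\rho(Q_8)$, and to prove that the fixed-point subalgebra $V_{N(A_1^{24})}^{\rho(Q_8)}$ is stable under the whole of $\rho(2O)$ with the normal subgroup $\rho(Q_8)$ acting trivially; the induced action on the fixed points then descends to the quotient, giving the $S_3$-action. The mechanism is entirely the normality of $Q_8$ in $2O$ together with the standard stability of a fixed-point subalgebra under a normalizing group.

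First I would recall from Lemma \ref{lem:image-of-binary-octahedral} that $\rho(-1)$ is the identity, so $\rho$ factors through an action of $S_4 \cong 2O/\{\pm 1\}$ on $V_{N(A_1^{24})}$, under which $\rho(Q_8) = \langle \rho(i),\rho(j)\rangle$ is exactly the normal Klein four-subgroup $V_4 \subset S_4$. Since $2O$ is by definition the normalizer of $Q_8$ in the unit quaternions, we have $Q_8 \triangleleft 2O$, and hence $\rho(Q_8) \triangleleft \rho(2O)$. The order count $|2O/Q_8| = 6$, together with the fact that the quotient permutes the three involutions $\rho(i),\rho(j),\rho(k)$ faithfully (as recorded in Lemma \ref{lem:adjoint-action-of-binary-octahedral}), identifies $2O/Q_8 \cong S_4/V_4 \cong S_3$ rather than $\bZ/6\bZ$.

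Next I would invoke the elementary fact that the fixed points of a group of vertex algebra automorphisms form a vertex subalgebra: for any group $H$ acting on a vertex algebra $V$ by automorphisms, each $\sigma \in H$ satisfies $\sigma(u_n v) = (\sigma u)_n(\sigma v)$, so $V^H$ is closed under every product $u_n v$ and contains $\unit$ (and the conformal vector $\omega$). Applying this to $H = \rho(Q_8)$ shows $V_{N(A_1^{24})}^{\rho(Q_8)}$ is a vertex subalgebra. The crucial step is then the normalization argument: writing $A = \rho(2O)$, $B = \rho(Q_8) \triangleleft A$, and $V = V_{N(A_1^{24})}$, for any $g \in A$, $x \in V^B$, and $h \in B$, normality gives $g^{-1}hg \in B$, so $h(gx) = g\bigl((g^{-1}hg)x\bigr) = gx$, whence $gx \in V^B$. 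Thus $A$ preserves $V^B$. Because $B$ acts trivially on $V^B$ by construction, the restricted $A$-action on $V^B$ factors through $A/B \cong S_3$, and this factored action is the inherited $S_3$-action.

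I do not expect a genuine obstacle here: the only substantive inputs are the normality $Q_8 \triangleleft 2O$ and the standard preservation of a normal subgroup's fixed-point subalgebra, both of which are immediate from the preceding lemmas. The single point deserving a word of care is the identification of $2O/Q_8$ with $S_3$ rather than a cyclic group of order $6$, which I would dispatch using the faithful permutation action on $\{\rho(i),\rho(j),\rho(k)\}$ from Lemma \ref{lem:adjoint-action-of-binary-octahedral}.
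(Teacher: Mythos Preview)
Your argument is correct and follows essentially the same approach as the paper: both use normality of $\rho(Q_8)$ in $\rho(2O)$ to see that $\rho(2O)$ preserves the fixed-point subalgebra with $\rho(Q_8)$ acting trivially, so the action descends to the quotient $S_3$. You are simply more explicit about the standard normalizer-preserves-fixed-points step and about why the quotient is $S_3$ rather than cyclic, while the paper additionally notes that the quotient map splits (via $S_4 \cong V_4 \rtimes S_3$), giving an honest $S_3$ subgroup of $\Aut V_{N(A_1^{24})}$ acting.
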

\begin{proof}
The action of $\rho(2O)$ on the fixed points for $\rho(Q_8)$ is clearly trivial when restricted to the normal subgroup $\rho(Q_8)$, so the action factors through the quotient.  Since $\rho(2O)$ admits a section of the quotient map, the action of this section is transported to the fixed point subalgebra.
\end{proof}

\subsection{Modules}

Vertex algebras have a notion of module that is analogous to the situation with commutative rings.

\begin{defn}
Let $V$ be a vertex algebra.  A $V$-\textbf{module} is a complex vector space $M$ together with a map $V \otimes M \to M((z))$, written $u \otimes w \mapsto Y_M(u,z)w = \sum_{n \in \bZ} u_n w z^{-n-1}$ satisfying the following conditions:
\begin{enumerate}
\item (unit) $Y_M(\unit,z)w = w = wz^0$ for all $w \in W$.
\item (weak associativity) For any $u,v \in V$ and $m \in M$, there is some $N$ such that $(z+w)^N Y_M(u,z+w)Y_M(v,w)m = Y_M(Y(u,z)v,w)m$.
\end{enumerate}
\end{defn}

We note that we get an equivalent definition if we replace the second condition with the following: For any $u,v \in V$ and $m \in M$, the operator-valued power series $Y_M(u,z)Y_M(v,w)m \in M((z))((w))$, $Y_M(v,w)Y_M(u,z)m \in M((w))((z))$ and $Y_M(Y(u,z-w)v,w)m \in M((w))((z-w))$ are expansions of a common element of $M[[z,w]][z^{-1},w^{-1},(z-w)^{-1}]$, i.e., there is some $N>0$ such that multiplying these three power series by $z^N w^N (z-w)^N$ yields equal series in $M[[z,w]]$.  This yields an equivalence between modules and split square-zero extensions of a vertex algebra.  One may also define modules using the Jacobi identity.

\begin{thm}
Let $\fh$ be a complex vector space with nondegenerate inner product.  Then, any irreducible $\pi_0$-module has the form $\pi_u$ for some $u \in \fh$, where $Y_{\pi_u}(e^i t^{-1},z) = \sum_{n \in \bZ} e^i t^n z^{-n-1}$. 
\end{thm}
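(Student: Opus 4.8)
The plan is to classify irreducible modules over the Heisenberg vertex algebra $\pi_0$ by reducing the problem to representation theory of the Heisenberg Lie algebra $\hat{\fh}$. The key observation is that a $\pi_0$-module structure is essentially the same data as a suitable representation of $\hat{\fh}$: the generating fields $Y_M(e^i t^{-1},z) = \sum_{n \in \bZ} (e^i t^n) z^{-n-1}$ endow $M$ with an action of the operators $e^i t^n$, and I must check that the defining commutation relations of $\hat{\fh}$ follow from the module axioms. First I would compute the commutator $[Y_M(e^i t^{-1},z), Y_M(e^j t^{-1},w)]$ using weak associativity (equivalently, the commutator formula extracted from the Jacobi identity for modules); the only nonzero residue product among the generators is $(e^i t^{-1})_1(e^j t^{-1}) = (e^i,e^j)\unit$, so the commutator reduces to $(e^i,e^j)\,\partial_w\delta(z-w)\cdot\id_M$, which upon extracting Fourier coefficients yields exactly $[e^i t^m, e^j t^n] = m(e^i,e^j)\delta_{m,-n}\id_M$ together with the central element $K$ acting as the identity. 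Thus any $\pi_0$-module restricts to a smooth $\hat{\fh}$-module on which $K=1$.

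Having made this identification, the next step is to bring in the standard representation theory of the infinite-dimensional Heisenberg algebra. The annihilation operators $e^i t^n$ for $n > 0$, together with $e^i t^0$ (which is central-like in that it commutes with everything but need not act as a scalar a priori), generate the subalgebra $\fh^+$; the operators $e^i t^n$ for $n<0$ are creation operators. I would argue that on an irreducible module, the bounded-below grading structure forces the existence of a lowest-weight vector annihilated by all $e^i t^n$ with $n>0$, and on which the commuting operators $e^i t^0$ act by scalars, say $(e^i,u)$ for a uniquely determined $u \in \fh$ (here nondegeneracy of the inner product is used to realize the tuple of eigenvalues as pairing against a genuine vector $u$). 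The space of such lowest-weight vectors is then a module for the abelian algebra spanned by the $e^i t^0$, and irreducibility forces it to be one-dimensional. Applying creation operators freely, the PBW theorem identifies $M$ with the induced module $\Ind_{\fh^+}^{\hat{\fh}}\bC_u = \pi_u$.

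The main obstacle I anticipate is verifying that an abstract $\pi_0$-module genuinely yields the full smooth $\hat{\fh}$-action with the correct grading, so that the lowest-weight theory applies. Concretely, one must ensure that the fields $Y_M(e^i t^{-1},z)$ are honest fields (Laurent series truncated below in each matrix coefficient), which is where the $M((z))$ target in the module axiom is essential, and that $L_0$ acts with eigenvalues bounded below so a lowest-weight vector exists; in the purely vertex-algebraic setting one extracts the grading from the conformal vector $\omega$ and the Virasoro action, but since the theorem as stated works with a general nondegenerate $\fh$ I would instead argue directly that irreducibility plus the field property suffices to locate a lowest-weight vector. Once the lowest-weight vector is produced and the $e^i t^0$-eigenvalues are packaged into $u \in \fh$, the remaining identification $M \cong \pi_u$ is the standard PBW freeness argument already invoked earlier in the construction of $\pi_u$, and the explicit formula for $Y_{\pi_u}$ is read off from the generating action.
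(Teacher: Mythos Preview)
The paper states this theorem without proof or specific attribution, so there is no argument in the paper to compare yours against.  Your plan is the standard one in the literature: the commutator formula transports the $\pi_0$-module axioms to a restricted $\hat{\fh}$-action with $K=1$, after which the classification of irreducible restricted Heisenberg modules takes over.  This is essentially the route taken in, e.g., Lepowsky--Li's textbook treatment.

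The obstacle you isolate in your last paragraph is the only genuine content, and you are right to be cautious: the paper's module definition is the ungraded (``weak'') one, so you cannot simply assume an $L_0$-grading bounded below.  The usual resolution is to note that the positive modes $e^it^n$ ($n>0$) mutually commute, and that the field axiom applied not only to the generators $e^it^{-1}\unit$ but to arbitrary PBW monomials in $\pi_0$ forces each positive mode to act locally nilpotently on $M$; together with $\dim\fh<\infty$ this makes $\Omega(M)=\{m:e^it^n m=0\text{ for all }i,n>0\}$ nonzero.  The commuting zero modes $e^it^0$ then admit a simultaneous eigenvector in $\Omega(M)$, and nondegeneracy of the form lets you write the eigenvalue tuple as $(e^i,u)$ for a unique $u\in\fh$.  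Irreducibility forces this eigenvector to generate $M$ freely over $\Sym(t^{-1}\fh[t^{-1}])$, giving $M\cong\pi_u$.  Your sketch is correct once this point is filled in; the only thing I would tighten is the sentence suggesting ``irreducibility plus the field property'' alone gives the lowest-weight vector---you really do need the local-nilpotence argument for the positive modes, not just the truncation of each individual field.
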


While it is easy to understand the irreducible $\pi_0$-modules, the category of all $\pi_0$-modules has a complicated extension structure.  In particular, the finitely generated $\pi_0$-modules that admit a $\bZ_{\geq 0}$-grading are in natural bijection with finite length $\Sym \fh$-modules, or equivalently, finite length coherent sheaves on $\fh$.  To avoid these pathologies, we introduce complete reducibility hypotheses.

\begin{defn}
Let $V$ be a vertex operator algebra.  We say $V$ is \textbf{strongly rational} if the following conditions are satisfied:
\begin{enumerate}
\item The operator $L_0$ on $V$ has non-negative eigenvalues, and the kernel of $L_0$ is spanned by $\unit$ - this is sometimes called the ``CFT type'' condition.
\item Any $V$-module is a direct sum of irreducible $V$-modules.
\end{enumerate}
We say $V$ is \textbf{strongly holomorphic} if $V$ is not only strongly rational, but also any irreducible $V$-module is isomorphic to $V$.  That is, the representation category of a strongly holomorphic vertex operator algebra is equivalent to the category of complex vector spaces.
\end{defn}

The main examples we consider are lattices:

\begin{thm} \label{thm:classification-of-lattice-va-modules} (\cite{D93})
Let $L$ be a positive definite even lattice, and let $L^\vee$ be the dual lattice.  Then, $V_L$ is strongly rational, and moreover, the category of $V_L$-modules is equivalent to the category of $L^\vee/L$-graded vector spaces.  The irreducible $V_L$-modules have the form $V_{L+\gamma} = \bigoplus_{u \in L + \gamma} \pi_u$ for $\gamma \in L^\vee$.  In particular, $V_L$ is strongly holomorphic if and only if $L$ is unimodular.
\end{thm}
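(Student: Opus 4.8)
The plan is to follow Dong's strategy: classify the irreducible $V_L$-modules as the coset modules $V_{L+\gamma}$, verify they exhaust all irreducibles, and then obtain semisimplicity. I would first dispatch the CFT-type condition. The conformal grading places the bottom of $\pi_u$ in weight $(u,u)/2$, and the Heisenberg creation operators only raise weight, so the weight-$0$ space of $V_L$ is the weight-$0$ space of $\pi_0$, namely $\bC\unit$; positive-definiteness and evenness guarantee $(u,u)/2 \geq 1$ for $u \neq 0$. This gives condition (1) of strong rationality.

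Next I would construct the candidate modules. For each $\gamma \in L^\vee$ the space $V_{L+\gamma} = \bigoplus_{u \in L+\gamma}\pi_u$ carries module vertex operators given by the same cocycle-twisted formulas that define $Y$ on $V_L$; the essential point is that for $u \in L$ and $\beta \in L+\gamma$ the operator $Y_M(v_u,z)$ carries $z$-exponents in $(u,\beta)+\bZ = \bZ$, since $(u,\beta) \in (u,\gamma)+\bZ$ and $(u,\gamma)\in\bZ$ by $u \in L$, $\gamma \in L^\vee$. Thus $Y_M$ lands in $M((z))$, and weak associativity is inherited from the identities verified in constructing $V_L$. I would then check that $V_{L+\gamma}$ is irreducible and that $V_{L+\gamma} \cong V_{L+\gamma'}$ if and only if $\gamma \equiv \gamma' \pmod L$: a nonzero submodule is a sum of irreducible Heisenberg submodules $\pi_\beta$, the lattice operators $v_u$ (for $u\in L$) move nontrivially between all the $\pi_\beta$ in the coset, and the Heisenberg algebra acts irreducibly on each $\pi_\beta$, so the submodule is everything; distinctness follows because the set of $\fh$-weights is a module invariant and these sets are disjoint across distinct cosets.

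The crux is showing every irreducible module $M$ is one of the $V_{L+\gamma}$. Restricting $M$ to the Heisenberg subalgebra $\pi_0 \subset V_L$ and using the classification of irreducible Heisenberg modules ($\pi_\beta$ for $\beta \in \fh$), I decompose $M$ as a Heisenberg module into pieces $\pi_\beta$. The module axioms then force two facts. First, each lattice operator $Y_M(v_u,z)$ shifts the $\fh$-weight by $u \in L$, so the support of $M$ lies in a single coset $\beta_0 + L$. Second, and this is the heart of the argument, since $Y_M$ must take values in $M((z))$ with \emph{integer} powers of $z$, the exponents $(u,\beta_0)+\bZ$ coming from $Y_M(v_u,z)$ must be integral for all $u \in L$, forcing $\beta_0 \in L^\vee$. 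Setting $\gamma = \beta_0 + L \in L^\vee/L$, a generating-vector argument using irreducibility identifies $M \cong V_{L+\gamma}$ with all multiplicities equal to $1$. I expect this integrality-and-single-coset step to be the main obstacle, as it is exactly where the module vertex-operator axioms (weak associativity together with the requirement of Laurent-series values) must be leveraged carefully.

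Finally I would establish complete reducibility and the holomorphy criterion. The cleanest route to semisimplicity is via Zhu's associative algebra $A(V_L)$, which one computes to be the finite-dimensional commutative semisimple algebra $\bC[L^\vee/L]$; its simple modules are indexed by $L^\vee/L$ and match the $V_{L+\gamma}$ found above, and the standard Zhu-theoretic criterion then yields that every $V_L$-module is a direct sum of irreducibles. Since $L$ is nondegenerate, $L^\vee/L$ is finite, so there are finitely many irreducibles. This gives the claimed equivalence between $V_L$-modules and $L^\vee/L$-graded vector spaces. For the last assertion, $V_L$ is strongly holomorphic if and only if its only irreducible module is $V_L$ itself, if and only if $L^\vee/L$ is trivial, if and only if $L = L^\vee$, that is, if and only if $L$ is unimodular.
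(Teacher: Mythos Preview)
The paper does not give its own proof of this theorem: it is stated as a cited result from \cite{D93} (Dong 1993) and used as a black box throughout. So there is no ``paper's proof'' to compare against; your proposal is effectively a sketch of Dong's original argument, and in outline it is correct and follows that standard route.

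Two technical points worth tightening if you flesh this out. First, when you decompose an arbitrary irreducible $V_L$-module $M$ over the Heisenberg subalgebra, you are implicitly assuming that the zero modes $h_0$ act semisimply on $M$; this is not automatic from the bare module axioms, since $\pi_0$ is not rational. One usually works with admissible (i.e., $\bZ_{\geq 0}$-gradable) modules and invokes Zhu's bijection, or argues directly that the conformal grading forces semisimplicity of the Heisenberg action on lowest-weight spaces. Second, the identification $A(V_L) \cong \bC[L^\vee/L]$ is morally right (it is a commutative semisimple algebra with $|L^\vee/L|$ simple modules), but the actual computation in Dong's paper presents it as a quotient of a polynomial algebra by relations coming from null vectors, and the isomorphism with the group algebra is not entirely trivial. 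Also, semisimplicity of $A(V)$ alone does not immediately yield that \emph{every} module is completely reducible; full rationality (or regularity) of $V_L$ requires an additional argument, supplied in later work of Dong--Li--Mason. None of this affects the correctness of the strategy, only the level of detail needed to make it a proof rather than a plan.
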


Strongly rational vertex operator algebras satisfy the following key properties:
\begin{thm}
Let $V$ be a strongly rational vertex operator algebra.  Then:
\begin{enumerate}
\item (Modular invariance) The characters of irreducible $V$-modules converge to holomorphic functions on the complex upper half-plane, and the action of $SL_2(\bZ)$ by M\"obius transformations induces a finite dimensional representation on the space of characters \cite{Z96}.
\item (Verlinde Formula) The modular $S$-transformation on characters of irreducible modules diagonalizes the fusion rule matrices for irreducible $V$-modules \cite{H04}.
\item (Modular tensor structure) The category of $V$-modules has a natural modular tensor category structure \cite{H05}.
\end{enumerate}
\end{thm}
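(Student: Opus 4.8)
For modular invariance I would work not with the characters alone but with the full family of genus-one correlation functions, namely the graded trace functions $\Tr_M Y_M(u_1,z_1)\cdots Y_M(u_k,z_k)\, q^{L_0 - c/24}$ attached to each irreducible module $M$. The central tool is Zhu's recursion, which expresses a $k$-point trace function as a combination of lower ones with coefficients built from Weierstrass and Eisenstein series; iterating this reduces everything to the one-point functions, and hence to the characters. The key input is a cofiniteness hypothesis (the $C_2$-condition), which follows from strong rationality and guarantees that the span of all such trace functions is finite dimensional. One then checks that this span is preserved by the pullback of correlation functions under the modular action on the underlying elliptic curve, which is exactly the $SL_2(\bZ)$-action on $\tau$; finite dimensionality then forces this to be an honest finite dimensional representation.

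For the tensor structure I would construct a braided tensor product on the module category using intertwining operators. Concretely, one defines the $P(z)$-tensor product of two modules as representing a suitable functor of intertwining maps, and then produces the associativity and braiding isomorphisms from the convergence and analytic continuation of products and iterates of intertwining operators on the configuration space of points on $\bP^1$. Rigidity, i.e.\ the existence of duals, is obtained by exhibiting explicit evaluation and coevaluation intertwining operators and verifying the triangle identities, where nondegeneracy of the relevant pairings is the crucial point. Non-degeneracy of the braiding (modularity) is then equivalent to invertibility of the $S$-matrix produced in the first part, so the three assertions interlock.

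The Verlinde formula falls out of this picture: rigidity identifies the fusion coefficients with the structure constants of the Grothendieck ring, and the modular $S$-transformation simultaneously diagonalizes the fusion matrices because it intertwines the regular representation of that ring with its decomposition into characters. Thus I would treat the three parts in the order stated, deriving part (2) as a consequence of the rigid braided structure of part (3) together with the modular data of part (1).

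The main obstacle, and the technical heart of the whole theorem, is the genus-zero duality for intertwining operators: proving that the relevant products and iterates converge and extend to (possibly multivalued) analytic functions with the expected monodromy, so that the associativity and braiding isomorphisms exist at all. This is where $C_2$-cofiniteness is used most essentially, through systems of differential equations with regular singular points whose solution spaces govern the analytic continuation. Establishing rigidity is the second serious difficulty, since it is not formal and again depends on the fine analytic properties of four-point functions. Both are carried out in the cited work, and I would not expect to improve on those arguments here.
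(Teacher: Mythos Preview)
Your outline is a faithful sketch of the actual arguments in \cite{Z96}, \cite{H04}, and \cite{H05}: Zhu's recursion with the $C_2$-cofiniteness input for part (1), the Huang--Lepowsky $P(z)$-tensor product machinery and the convergence/duality of intertwining operators for part (3), and the derivation of Verlinde from rigidity plus the modular data for part (2). There is nothing mathematically wrong with what you wrote, and the difficulties you flag (genus-zero duality and rigidity) are indeed the hard points.

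However, you have done far more than the paper does. The paper gives no proof of this theorem whatsoever: it is stated purely as a citation of the three references, with no argument, no sketch, and no further discussion. In the context of this paper the theorem functions as imported background from the vertex-algebra literature, not as something to be proved. So while your proposal is correct as an outline of the literature, it does not match the paper's ``proof'', because there is none to match.
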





An important case we consider is fixed points of holomorphic vertex operator algebras under actions of finite groups.  In the following theorem, the assumption that $G$ is solvable is conjectured to be superfluous, and by \cite{M19}, it can be removed when $V^G$ satisfies the ``$C_2$-cofiniteness'' condition.

\begin{thm} \label{thm:modules-for-solvable-fixed-point}
Let $V$ be a strongly holomorphic vertex operator algebra, and let $G$ be a finite solvable group of automorphisms.  Then:
\begin{enumerate}
\item The fixed point subalgebra $V^G$ is strongly rational \cite{CM16}.
\item The representation category of $V^G$ is equivalent to the category of $D^\alpha(\bC G)$-modules, for some $\alpha \in H^3(G,\bC^\times)$, where $D^\alpha(\bC G)$ is the $\alpha$-twisted Drinfeld double of the group ring $\bC G$ \cite{DNR21}.
\end{enumerate}
\end{thm}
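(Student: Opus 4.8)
The plan is a dévissage to the case of a cyclic group of prime order, exploiting the solvability of $G$, followed by an appeal to the orbifold theory of twisted modules. For the first claim I would prove the \emph{a priori} stronger statement that $W^{G}$ is strongly rational whenever $W$ is strongly rational (not merely holomorphic) and $G$ is finite solvable, and then specialize. Arguing by induction on $|G|$, choose a normal subgroup $N \trianglelefteq G$ with $G/N$ cyclic of prime order. Since $V^G = (V^N)^{G/N}$ and the broadened inductive hypothesis gives strong rationality of $V^N$ (here $N$ is solvable and $|N| < |G|$, while $G$ normalizes $N$ so $G/N$ acts on $V^N$), the problem collapses to a single prime-cyclic orbifold step $W^{\langle g \rangle}$ with $W$ strongly rational and $g$ of prime order. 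The holomorphic case of the theorem as stated is then recovered by taking $W = V$ at the bottom of the composition series.

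For the prime-cyclic step there are two things to establish: $C_2$-cofiniteness and complete reducibility of $W^{\langle g \rangle}$-modules. The former follows from Miyamoto's theorem that cyclic orbifolds of $C_2$-cofinite algebras remain $C_2$-cofinite, proved by bounding the Zhu $C_2$-algebra of $W^{\langle g\rangle}$ in terms of the finitely many $g^i$-twisted $W$-modules. For the latter one constructs every irreducible $W^{\langle g\rangle}$-module as an eigenspace of $g$ acting on some irreducible $g^i$-twisted $W$-module, verifies that the expected number of modules appears, and deduces semisimplicity from modular invariance of twisted characters (the twisted analogue of Zhu's theory). I expect this complete-reducibility step to be the main obstacle, since it requires simultaneous control of all twisted sectors together with their modular transformation behaviour.

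For the second claim I would use the twisted modules more structurally. Holomorphicity guarantees, for each $g \in G$, a unique irreducible $g$-twisted module $V(g)$; conjugation yields isomorphisms $V(g) \simto V(hgh^{-1})$, and fusion of sectors $V(g) \boxtimes V(h) \cong V(gh)$ endows the totality of twisted modules with the structure of a $G$-crossed braided fusion category all of whose simple objects are invertible, i.e. a categorical $G$-extension of $\Vect$. Such extensions are classified by $H^3(G,\bC^\times)$, and the resulting class is precisely the anomaly $\alpha$. The category of $V^G$-modules is then the equivariantization of this $G$-crossed category under the residual $G$-action, and the equivariantization of the pointed category attached to $\alpha$ is exactly $\Rep D^\alpha(\bC G)$. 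The hard part here is making the fusion and associativity of twisted sectors rigorous and identifying the associator's cohomology class with $\alpha$, which is the analytic and categorical heart of the cited work.
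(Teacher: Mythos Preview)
The paper does not supply its own proof of this theorem: both parts are stated as results quoted from the literature, with part (1) attributed to \cite{CM16} and part (2) to \cite{DNR21}. There is thus nothing in the paper to compare your argument against line by line.

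That said, your sketch is faithful to the strategies of the cited references. For part (1), the d\'evissage along a composition series of $G$ together with Miyamoto's $C_2$-cofiniteness for cyclic orbifolds is exactly the backbone of \cite{CM16}; you are also right that the inductive hypothesis must be strengthened from ``strongly holomorphic'' to ``strongly rational'', since intermediate fixed-point algebras $V^N$ need not be holomorphic. One caution: your proposed deduction of complete reducibility ``from modular invariance of twisted characters'' is too compressed to stand on its own. In \cite{CM16} the rationality step requires substantial additional input (control of the Zhu algebra, projectivity arguments, and careful bookkeeping of twisted intertwiners), and modular invariance alone does not obviously force semisimplicity of the module category. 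This is indeed, as you anticipate, the technical heart of the matter.

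For part (2), your outline via $G$-crossed braided extensions of $\Vect$, the anomaly class in $H^3(G,\bC^\times)$, and identification of the equivariantization with $\Rep D^\alpha(\bC G)$ is the approach of \cite{DNR21} (building on earlier work of Kirillov and M\"uger on equivariantization). Your identification of the hard step---making fusion and associativity of twisted sectors rigorous and pinning down the associator class---is accurate.
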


For the cases of $V_\Lambda$ and $V_{N(A_1^{24})}$ and certain involutions, we will describe the modules more concretely later.  

Vertex algebras admit a different notion of module that doesn't appear in the usual study of associative algebras:

\begin{defn}
Let $V$ be a vertex algebra, and let $g$ be an automorphism of finite order $k$.  A $g$-\textbf{twisted} $V$-\textbf{module} is a vector space $M$ equipped with a linear map $V \otimes M \to M((z^{1/k}))$, written $u \otimes w \mapsto Y_M(u,z)w = \sum_{n \in \frac{1}{k}\bZ} u_n w z^{-n-1}$ satisfying the following conditions:
\begin{enumerate}
\item $Y_M(\unit,z)w = w = wz^0$ for all $w \in W$.
\item For any $u,v \in V$ and $m \in M$, the operator-valued power series $Y_M(u,z)Y_M(v,w)m \in M((z^{1/k}))((w^{1/k}))$, $Y_M(v,w)Y_M(u,z)m \in M((w^{1/k}))((z^{1/k}))$ and $Y_M(Y(u,z-w)v,w)m \in M((w^{1/k}))((z-w))$ are expansions of a common element of $M[[z^{1/k},w^{1/k}]][z^{-1},w^{-1},(z-w)^{-1}]$, i.e., there is some $N>0$ such that multiplying these three power series by $z^N w^N (z-w)^N$ yields equal series in $M[[z^{1/k},w^{1/k}]]$.
\item If $a \in V$ satisfies $ga = e^{2\pi i k/n}a$ for some $k \in \bZ$, then $Y_M(a,z) \in z^{k/n}(\End M)[[z,z^{-1}]]$.
\end{enumerate}
\end{defn}

In conformal field theory, modules are called ``sectors'', and are naturally attached to points on a Riemann surface.  One may then compute quantities like correlation functions by choosing vectors in the modules, and applying the global geometry of the curve.  In order to consider twisted modules, called ``twisted sectors'', one considers a Galois cover of a Riemann surface, and attaches twisted modules to ramified points, where the monodromy matches the twisting.

The main result about twisted modules that we need is:

\begin{thm} (\cite{DLM97})
Let $V$ be a strongly holomorphic vertex operator algebra, and let $g$ be a finite order automorphism of $V$.  Let $T_g(\tau) = \sum_n \Tr(g|V_n) q^{n-c/24}$ be the graded trace of $g$ on $V$, where we define $q = e^{2\pi i \tau}$ for $\tau$ on the complex upper half-plane.  Then, there exists a unique irreducible $g$-twisted $V$-module $V(g)$ up to isomorphism, and its character $\sum \dim V(g)_n q^{n-c/24}$ is a nonzero scalar multiple of $T_g(-1/\tau)$ as a holomorphic function on the complex upper half-plane.
\end{thm}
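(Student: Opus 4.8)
The plan is to reduce the statement to the orbifold theory of the cyclic group $G = \langle g\rangle$ together with Zhu-style modular invariance. Write $k$ for the order of $g$, so that $G \cong \bZ/k\bZ$ is abelian, hence solvable, and Theorem \ref{thm:modules-for-solvable-fixed-point} applies: the fixed point algebra $V^G$ is strongly rational, and its module category is equivalent to the category of $D^\alpha(\bC G)$-modules for a suitable $\alpha \in H^3(G,\bC^\times)$. In particular $V^G$ carries the Zhu modular invariance and the modular tensor structure that we will exploit below. First I would use this equivalence to organize all twisted and untwisted $V$-modules by the grading of simple objects of the double by the group element $h \in G$ on which they are supported. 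Since $V$ is holomorphic, $V$ itself is the unique irreducible untwisted module, and as a $V^G$-module it splits into the eigenspaces $V = \bigoplus_{j \in \bZ/k\bZ} V^{(j)}$ on which $g$ acts by $e^{2\pi i j/k}$; these are $k$ distinct irreducible $V^G$-modules, precisely the simple objects of the double supported on $h = 1$.

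For existence and uniqueness I would invoke the orbifold correspondence in the following form: each irreducible $h$-twisted $V$-module restricts, as a $V^G$-module, to the sum of those simple objects of $D^\alpha(\bC G)$ supported on $h$, and conversely such a sum assembles into an $h$-twisted module once the fractional $z$-grading and monodromy demanded by condition (3) in the definition of a $g$-twisted module are imposed. The simple objects supported on a fixed $h$ form a single orbit under the character group $\hat G$ acting by tensoring with linear characters -- this is a structural feature of the Drinfeld double, valid whether the relevant transgressed cocycle is degenerate or not -- so there is exactly one irreducible $h$-twisted $V$-module for each $h$. Taking $h = g$ produces the desired module $V(g)$, unique up to isomorphism.

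For the character statement I would combine Zhu modular invariance for the strongly rational algebra $V^G$ with the explicit $S$-matrix of the modular tensor category $D^\alpha(\bC G)$-mod. The graded trace $T_g(\tau) = \Tr_V(g\, q^{L_0 - c/24}) = \sum_j e^{2\pi i j/k}\,\chi_{V^{(j)}}(\tau)$ is a fixed linear combination of characters of untwisted-sector $V^G$-modules. The $S$-matrix of the double interchanges the group-element label with the character label, so that applying $S$, i.e. $\tau \mapsto -1/\tau$, to the weighted sum $\sum_j e^{2\pi i j/k}\,\chi_{V^{(j)}}$ localizes it onto the objects supported on $h = g$; moreover the resulting coefficients are independent of the character label within that sector. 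Hence $T_g(-1/\tau)$ is a single nonzero multiple of $\sum_{M}\chi_M(\tau)$, where $M$ ranges over the $g$-supported simples, and this sum is exactly the character $\sum_n \dim V(g)_n\, q^{n-c/24}$ of $V(g)$.

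The hard part will be matching the abstract categorical $S$-matrix of $D^\alpha(\bC G)$-mod with the concrete analytic transformation $\tau \mapsto -1/\tau$ of the $q$-series. Zhu's theorem only guarantees that the span of characters carries some $SL_2(\bZ)$-action; identifying that action with the double's $S$-matrix, with the correct vacuum-energy normalization $-c/24$ and the correct identification of $g$ with the distinguished generator of $G$, is where the genuine work lies, and is precisely what forces both the cocycle $\alpha$ and the fractional $L_0$-grading of $V(g)$. A secondary subtlety is verifying that the overall scalar is nonzero, which in the end rests on $V(g)$ being a nonzero module, i.e. on the existence established above.
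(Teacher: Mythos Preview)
The paper does not supply a proof of this theorem; it is quoted as a result of Dong--Li--Mason \cite{DLM97} and used as a black box. There is therefore no ``paper's own proof'' to compare against.

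That said, your proposed route has a structural circularity worth flagging. You invoke Theorem~\ref{thm:modules-for-solvable-fixed-point} (strong rationality of $V^G$ from \cite{CM16}, and the identification of $\Rep(V^G)$ with $D^\alpha(\bC G)$-mod from \cite{DNR21}) together with Huang's Verlinde/modular-tensor results. These are all chronologically and logically downstream of \cite{DLM97}: the existence and uniqueness of irreducible $g$-twisted modules for holomorphic $V$, and the modular transformation law linking $T_g(\tau)$ to the twisted character, are among the foundational inputs on which the later orbifold and modular-tensor theorems rest. So while the picture you sketch is internally coherent as a summary of how the pieces fit together \emph{a posteriori}, it is not an independent proof of the statement.

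The original argument in \cite{DLM97} is rather different in flavor: existence and uniqueness are obtained via the twisted Zhu algebra $A_g(V)$ and a direct analysis of the space of one-point functions on the torus, and the modular transformation relating $T_g$ to the twisted character is proved by establishing that the twisted trace functions satisfy the same modular differential equations as the untwisted ones and then tracking the $SL_2(\bZ)$-action on the $(g,h)$-labels directly. No appeal to rationality of $V^G$, to the Drinfeld double, or to a modular tensor structure is needed. Your self-identified ``hard part''---matching the categorical $S$-matrix with the analytic $\tau\mapsto -1/\tau$---is precisely the content of Huang's theorem, which in turn presupposes the \cite{DLM97} machinery you are trying to derive.
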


We can say more in the particular case of the $-1$-automorphism of positive definite even unimodular lattices:

\begin{thm} (\cite{FLM88})
Let $\theta$ be an order 2 lift of $-1$ on a positive definite even unimodular lattice $L$, and let $\hat{L}$ be the unique central extension of $L$ by $\pm 1$ satisfying $[\hat{a},\hat{b}] = (-1)^{(a,b)}$ for any lifts $\hat{a}, \hat{b} \in \hat{L}$ of vectors $a,b \in L$.  Then, $V_L(\theta)$ has the form $T \otimes \Sym(t^{-1/2}\fh[t^{-1}])$, where $\fh = L \otimes \bC$, and $T$ is the irreducible $\hat{L}/2L$-module of rank $2^{(\dim \fh)/2}$.
\end{thm}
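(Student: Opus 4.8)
The plan is to construct $V_L(\theta)$ explicitly as a Fock-type space and then pin it down using the uniqueness half of the DLM theorem quoted above. Since $\theta$ lifts $-1$, it acts as $-1$ on $\fh = L\otimes\bC$, hence as $-1$ on every Heisenberg mode $ht^n$; by the twisting axiom the field $Y_M(ht^{-1},z)$ then lies in $z^{1/2}(\End M)[[z,z^{-1}]]$, so the Heisenberg modes in the twisted sector are indexed by $\frac12+\bZ$. First I would set up the twisted Heisenberg Lie algebra spanned by $\{ht^m : m\in\frac12+\bZ\}$ together with a central $K$, with bracket $[ht^m,h't^n]=m(h,h')\delta_{m+n,0}K$. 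Because $0\notin\frac12+\bZ$ there is no zero mode, so the negative modes act freely, and the unique irreducible representation on which the positive modes act locally nilpotently and $K$ acts by $1$ is the bosonic Fock space $\Sym(t^{-1/2}\fh[t^{-1}])$, with a one-dimensional lowest piece. This produces the tensor factor $\Sym(t^{-1/2}\fh[t^{-1}])$.

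The ground states carry the remaining structure, and identifying it as the representation $T$ is the conceptual heart of the argument. The lattice vertex operators attached to elements $a\in\hat L$ act on the lowest piece through the image of $a$ in the finite quotient $\bar L:=\hat L/2L$ (the $2L$ appearing because $\theta$ acts by $-1$, so $(1-\theta)L=2L$), a central extension of $L/2L$ by $\pm1$. Here I would use that $L$ is even and unimodular: evenness makes the reduction of $(\cdot,\cdot)$ modulo $2$ an alternating form on $L/2L\cong\bF_2^{d}$, where $d=\dim\fh$, and unimodularity makes it nondegenerate, so $\bar L$ is an extraspecial $2$-group of order $2^{1+d}$ whose commutator pairing is this symplectic form---exactly the content of the relation $[\hat a,\hat b]=(-1)^{(a,b)}$. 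By the representation theory of extraspecial $2$-groups (the finite Stone--von Neumann theorem over $\bF_2$), $\bar L$ has a unique irreducible representation $T$ on which the central $-1$ acts by $-1$, and $\dim T=2^{d/2}$; note $d$ is even, being divisible by $8$.

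With both factors in hand I would form $M=T\otimes\Sym(t^{-1/2}\fh[t^{-1}])$ and define the twisted fields: the bosonic field built from the half-integer Heisenberg modes, and, for each $a\in\hat L$ lying over a lattice vector, the FLM twisted vertex operator---a normal-ordered exponential in the half-integer modes multiplied by the action of the image $\bar a\in\bar L$ on $T$ and by a scalar power of $z$ that fixes the ground-state conformal weight. One then checks twisted weak-associativity (equivalently the twisted Jacobi identity), where the commutator relation $[\hat a,\hat b]=(-1)^{(a,b)}$ is precisely what reconciles the normal-ordering signs with the $\bar L$-module structure on $T$; irreducibility of $M$ is then immediate, because the negative bosonic modes generate the whole Fock space from any ground state while $T$ is $\bar L$-irreducible. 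Finally, by the DLM theorem the irreducible $\theta$-twisted module is unique, so $V_L(\theta)\cong M$. As a consistency check I would verify the character: $T_\theta(\tau)=(\eta(\tau)/\eta(2\tau))^{d}$, and the $S$-transform $\tau\mapsto -1/\tau$ gives $T_\theta(-1/\tau)=2^{d/2}q^{d/48}\prod_{r\in\frac12+\bZ_{\geq0}}(1-q^r)^{-d}$, matching $\dim T=2^{d/2}$ and ground-state weight $d/16$.

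The main obstacle is the verification of the twisted Jacobi identity for the FLM operators: the delicate cocycle and normal-ordering bookkeeping needed to show that the signs coming from expanding $(z-w)$-type factors in half-integer powers are exactly absorbed by the commutator $(-1)^{(a,b)}$, hence by the $\bar L$-action on $T$. If one is willing to lean entirely on the DLM uniqueness theorem, this obstacle can be bypassed, and the remaining load is the (still essential) structural identification of $T$ as the unique faithful irreducible of the extraspecial group $\hat L/2L$, for which nondegeneracy of the mod-$2$ form---that is, unimodularity together with evenness---is the crucial input.
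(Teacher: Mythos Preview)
The paper does not actually prove this theorem: it is stated with a citation to \cite{FLM88} and no argument is given. The result is used as a black box, and the next item is the corollary computing characters for the specific lattices $N(A_1^{24})$ and $\Lambda$.

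Your proposal is essentially a reconstruction of the original FLM argument, augmented with the DLM uniqueness theorem to avoid having to check that your explicit module exhausts all possibilities. The outline is correct: the twisting axiom forces half-integer Heisenberg modes, so the Fock factor is $\Sym(t^{-1/2}\fh[t^{-1}])$ with no zero modes; the ground states carry an action of $\hat{L}/2L$, which is extraspecial of order $2^{1+d}$ because evenness makes the mod-$2$ form alternating and unimodularity makes it nondegenerate; and Stone--von Neumann for extraspecial $2$-groups gives the unique faithful irreducible $T$ of dimension $2^{d/2}$. Your identification of the main technical obstacle---the cocycle bookkeeping in verifying the twisted Jacobi identity for the lattice vertex operators---is accurate, and this is exactly where the bulk of the work in \cite{FLM88} (and the later streamlined treatments) lies. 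One small point to be careful about: the embedding of $2L$ in $\hat{L}$ requires a choice of section, and one should check that the central extension restricted to $2L$ splits (it does, since $(2a,2b) \in 4\bZ$), so that the quotient $\hat{L}/2L$ is well-defined as a group rather than just a set.
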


Applying this to our examples, we have the following:

\begin{cor} \label{cor:characters-for-lattice-involutions}
The graded trace of $\rho(i)$ on $V_{N(A_1^{24})}$ is $\frac{2\theta_{L_0}(\tau)-\theta_{N(A_1^{24})}(\tau)}{\eta(\tau)^{24}} = \frac{\eta(\tau)^{24}}{\eta(2\tau)^{24}} = q^{-1} - 24 + 276q - 2048q^2 + \cdots$.  The irreducible $\rho(i)$-twisted $V_{N(A_1^{24})}$-module has character $2^{12}\frac{\eta(\tau)^{24}}{\eta(\tau/2)^{24}} = 4096q^{1/2} + 98304 q + \cdots$.  The same formulas hold if we replace $\rho(i)$ with $\rho(j)$.  For any order 2 lift $\theta$ of the $-1$ automorphism of $\Lambda$ to $V_\Lambda$, the graded trace of $\theta$ is $\frac{\eta(\tau)^{24}}{\eta(2\tau)^{24}}$, i.e., the same as the graded trace of $\rho(i)$.  The irreducible $\theta$-twisted $V_\Lambda$ module has character $2^{12}\frac{\eta(\tau)^{24}}{\eta(\tau/2)^{24}}$, i.e., the same as $V_{N(A_1^{24})}(\rho(i))$.  In each case, the twisted module decomposes into the direct sum of 2 irreducible modules for the fixed-point vertex subalgebra, and these submodules are the $+1$ and $-1$ eigenspaces for the action of $e^{2\pi i L_0}$.
\end{cor}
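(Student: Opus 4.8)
The plan is to compute all four graded traces directly from the lattice description, read off the twisted characters from the structure of the $(-1)$-twisted modules, and extract the irreducible decomposition from the orbifold theory already cited. I would begin with $\rho(i)$. By Lemma \ref{lem:fixed-point-for-Neimeier}, $\rho(i)$ acts as the identity on the Heisenberg subalgebra $\Sym(t^{-1}\fh[t^{-1}])$ and acts on each $\pi_u$ by the scalar $\prod_a i^{\sqrt2 u^a}$, which defines a character $N(A_1^{24}) \to \{\pm1\}$ with kernel $\Lambda_0$. Hence $\pi_u$ contributes $+q^{(u,u)/2}/\eta(\tau)^{24}$ to the trace for $u \in \Lambda_0$ and $-q^{(u,u)/2}/\eta(\tau)^{24}$ for $u \in N(A_1^{24})\setminus\Lambda_0$, so that
\[
\Tr\big(\rho(i)\,q^{L_0-1}\big) = \frac{\sum_{u \in \Lambda_0}q^{(u,u)/2} - \sum_{u \in N(A_1^{24})\setminus\Lambda_0}q^{(u,u)/2}}{\eta(\tau)^{24}} = \frac{2\theta_{\Lambda_0}(\tau) - \theta_{N(A_1^{24})}(\tau)}{\eta(\tau)^{24}},
\]
which is the first asserted expression (reading $\theta_{L_0}$ as $\theta_{\Lambda_0}$, the theta function of the lattice $\Lambda_0$).

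Next I would compute the traces of the $(-1)$-lifts $\rho(j)$ and $\theta$ directly, and use these to identify the eta quotient. Since $\rho(j)$ (Lemma \ref{lem:fixed-point-for-Neimeier}) and any order $2$ lift $\theta$ send $\pi_u$ to $\pi_{-u}$, each block $\pi_u \oplus \pi_{-u}$ with $u \neq 0$ is off-diagonal and contributes nothing to the trace, so only $\pi_0$ survives; positive-definiteness guarantees there is no other fixed coset. On $\pi_0$ the lift acts by $-1$ on every Heisenberg generator, so the graded trace is the standard bosonic sign-trace
\[
q^{-1}\prod_{n\geq1}(1+q^n)^{-24} = \frac{\eta(\tau)^{24}}{\eta(2\tau)^{24}},
\]
using $\prod_n(1+q^n)^{-1} = \prod_n (1-q^n)/(1-q^{2n})$. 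This settles the trace formulas for $\rho(j)$ and for $\theta$ on $V_\Lambda$, since both lattices have rank $24$ and the computation is identical. Because $\rho(i)$ and $\rho(j)$ are conjugate in $\Aut V_{N(A_1^{24})}$ (Lemma \ref{lem:image-of-binary-octahedral}) and automorphisms preserve the $L_0$-grading, their traces agree, which yields the identity $\big(2\theta_{\Lambda_0} - \theta_{N(A_1^{24})}\big)/\eta(\tau)^{24} = \eta(\tau)^{24}/\eta(2\tau)^{24}$ for free, with no modular-forms bookkeeping.

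For the twisted characters I would apply the twisted-module theorem of \cite{FLM88} to the $(-1)$-lifts, giving $V_L(\theta) = T \otimes \Sym(t^{-1/2}\fh[t^{-1}])$ with $\dim T = 2^{12}$; its character is therefore $2^{12}q^{1/2}\prod_{n\geq1}(1-q^{n-1/2})^{-24} = 2^{12}\eta(\tau)^{24}/\eta(\tau/2)^{24}$. Equivalently, applying $\tau \mapsto -1/\tau$ to the trace above via $\eta(-1/\tau) = \sqrt{-i\tau}\,\eta(\tau)$ produces $2^{12}\eta(\tau)^{24}/\eta(\tau/2)^{24}$, and the theorem of \cite{DLM97} identifies the twisted character with a nonzero scalar multiple of this; the scalar is $1$ because the lowest graded piece of $V_L(\theta)$ has dimension $\dim T = 2^{12}$. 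The $\rho(i)$-twisted module is obtained from the $\rho(j)$-twisted module by transport along the conjugating automorphism, so it has the same character.

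Finally, for the decomposition I would note that $e^{2\pi i L_0}$ is an order-$2$ automorphism of the twisted module $V(g)$ commuting with the action of the integer-graded fixed algebra $V^g$ (integer-weight operators shift the $\tfrac12\bZ$-grading by integers, hence preserve the two cosets modulo $\bZ$); its $\pm1$-eigenspaces are the integer- and half-integer-weight parts, and these are $V^g$-submodules. By the orbifold theory of Theorem \ref{thm:modules-for-solvable-fixed-point}, the fixed algebra $V^g$ of the order-$2$ group $\langle g\rangle$ has exactly four irreducible modules; two of these are the $\pm$-eigenspaces of $g$ on the untwisted $V$, so the two eigenspaces of $V(g)$ account for the remaining two and are therefore irreducible. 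The main obstacle is pinning down the normalizations precisely: confirming the twisted-sector ground-state degeneracy $2^{12}$ (and conformal weight $\tfrac32$) so that the scalar from \cite{DLM97} is exactly $1$, and invoking the $\bZ/2$-orbifold module count to guarantee that the two eigenspaces are genuinely irreducible rather than merely $V^g$-submodules.
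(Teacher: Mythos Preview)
Your argument is correct and supplies precisely the details the paper omits: the corollary is stated without proof, and the intended derivation is exactly the one you give --- compute the $\rho(i)$-trace from the $\Lambda_0$/non-$\Lambda_0$ sign splitting, compute the $(-1)$-lift traces from the sign-trace on $\pi_0$, use conjugacy of $\rho(i)$ and $\rho(j)$ to equate them, and read off the twisted characters from the FLM description $T\otimes\Sym(t^{-1/2}\fh[t^{-1}])$ together with the $S$-transform/DLM scalar check. One small streamlining: for the final decomposition claim you can cite Theorem~\ref{thm:vEMS} directly (its clause~(1) asserts that the $e^{2\pi i L_0}$-eigenspace decomposition of $V(g)$ is already a decomposition into irreducible $V^g$-modules), which avoids the counting detour through Theorem~\ref{thm:modules-for-solvable-fixed-point} and the implicit multiplicity-one step.
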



\subsection{Holomorphic orbifolds}

The monster vertex algebra was originally constructed using the following method:
\begin{enumerate}
\item Start with the Leech lattice vertex operator algebra $V_\Lambda$, which is strongly holomorphic (Theorem \ref{thm:classification-of-lattice-va-modules}).
\item Take the vertex subalgebra of fixed points under an order 2 automorphism $\theta$.
\item Adjoin an irreducible module for the fixed-point subalgebra.
\end{enumerate}

More generally, we may take fixed points for any group of automorphisms, and try to attach modules for the fixed point subalgebra.  For finite groups of automorphisms, this is called the ``orbifold construction'', since in conformal field theory, it is conjectured to produce the chiral algebra attached to a quotient orbifold. 

 Recent progress in the representation theory of fixed points has yielded the following:

\begin{thm} (\cite{EG18})
Let $V$ be a strongly holomorphic vertex operator algebra, and let $G$ be a finite solvable group of automorphisms.  Then strongly holomorphic extensions of $V^G$ are parametrized by pairs $(K,\psi)$, where $K$ is a subgroup of $G$ on which the restriction of $\omega$ is a coboundary, and $\psi \in Z^2(K,\bC^\times)$.  In particular, $V$ corresponds to the pair $(\{e\},1)$.
\end{thm}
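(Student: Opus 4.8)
The plan is to translate the statement into the classification of certain algebra objects in a modular tensor category, apply two standard categorical classifications, and then read the result back vertex-algebraically. Write $\cM(V^G)$ for the category of $V^G$-modules. Since $G$ is solvable, Theorem~\ref{thm:modules-for-solvable-fixed-point} tells us that $V^G$ is strongly rational and that $\cM(V^G)$ is braided equivalent to $D^\alpha(\bC G)\text{-mod} \cong Z(\Vect_G^\alpha)$, the Drinfeld center of the pointed fusion category $\Vect_G^\alpha$ of $G$-graded vector spaces twisted by the anomaly $\alpha \in H^3(G,\bC^\times)$ (the class denoted $\omega$ in the statement); by \cite{H05} this is a modular tensor category. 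The first step is then to record that a strongly holomorphic extension $W \supseteq V^G$ is the same datum as a connected commutative \'etale (Frobenius) algebra object $A \in \cM(V^G)$ carrying a compatible vertex algebra structure, via the Huang--Kirillov--Lepowsky correspondence between extensions and such algebras, and that holomorphicity of $W$ is equivalent to the category of local $A$-modules being trivial, i.e. to $A$ being a \emph{Lagrangian} algebra, $\dim A = |G| = \sqrt{\dim Z(\Vect_G^\alpha)}$. Thus classifying strongly holomorphic extensions of $V^G$ is the same as classifying Lagrangian algebras in $Z(\Vect_G^\alpha)$.

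Next I would feed in two purely categorical results. By the theorem of Davydov--M\"uger--Nikshych--Ostrik, Lagrangian algebras in a Drinfeld center $Z(\cC)$ are in canonical bijection with indecomposable semisimple module categories over $\cC$. Specializing to $\cC = \Vect_G^\alpha$, Ostrik's classification of module categories over a pointed fusion category parametrizes these, up to equivalence, by pairs $(K,\psi)$ in which $K \le G$ is a subgroup such that $\alpha|_K$ is cohomologically trivial (a coboundary) --- exactly the stated restriction on $\omega$ --- and, after fixing a $2$-cochain $\mu$ with $d\mu = \alpha|_K$, the remaining freedom is recorded by a class $\psi \in Z^2(K,\bC^\times)$, all taken modulo $G$-conjugation and coboundaries. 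Composing the extension dictionary of the previous paragraph with these two bijections yields the asserted parametrization of strongly holomorphic extensions of $V^G$ by pairs $(K,\psi)$.

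Finally I would identify the distinguished pair. Viewed as a $V^G$-module, the original algebra decomposes as $V \cong \bigoplus_{\rho} V_\rho \otimes \rho$ over the irreducible representations $\rho$ of $G$, recording only the \emph{untwisted} sectors (the ``charges''); the corresponding Lagrangian algebra is therefore the ``all charges'' function algebra $\mathcal{O}(G) = \bigoplus_{g \in G} \bC_g$ supported on the trivial-flux part of $Z(\Vect_G^\alpha)$, whose local modules recover $\Vect$. Under the Davydov--M\"uger--Nikshych--Ostrik bijection this algebra is attached to the module category $\Vect$ over $\Vect_G^\alpha$, i.e. to the trivial subgroup, so $V$ corresponds to $(\{e\},1)$, as claimed. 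The main obstacle is the vertex-algebraic realization built into the first paragraph: one must know not merely that Lagrangian \emph{algebra objects} exist, but that each is realized by a genuine strongly holomorphic vertex operator algebra, that inequivalent algebras give non-isomorphic extensions, and that holomorphicity matches the Lagrangian condition on the nose. This is exactly where the strong rationality of $V^G$ --- guaranteed by the solvability of $G$ through Theorem~\ref{thm:modules-for-solvable-fixed-point}, hence ultimately \cite{CM16} and \cite{DNR21} --- is indispensable, since it is what upgrades the abstract categorical classification to a statement about actual vertex operator algebras.
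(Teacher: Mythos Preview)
The paper does not prove this theorem at all: it is stated with a bare citation to \cite{EG18} and then used as a black box, so there is no ``paper's own proof'' to compare against. Your outline is in fact a faithful sketch of the argument one finds in the Evans--Gannon paper and the surrounding literature: pass from extensions of $V^G$ to commutative algebra objects in $\Rep(V^G)$ via the Huang--Kirillov--Lepowsky correspondence, identify $\Rep(V^G)$ with $Z(\Vect_G^\alpha)$ using Theorem~\ref{thm:modules-for-solvable-fixed-point}, match holomorphicity with the Lagrangian condition, and then invoke the Davydov--M\"uger--Nikshych--Ostrik bijection together with Ostrik's classification of module categories over pointed fusion categories.

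A minor caution on your write-up: the parametrization by pairs $(K,\psi)$ is really up to simultaneous $G$-conjugacy on $K$ and the appropriate equivalence on $\psi$ (you mention this in passing, but the theorem as stated in the paper suppresses it too). Also, your last paragraph correctly flags the genuine content: the Huang--Kirillov--Lepowsky dictionary tells you that every suitable commutative algebra object \emph{is} a vertex operator algebra extension, but checking that the resulting extension is again \emph{strongly holomorphic} (CFT type, rational, with only itself as an irreducible module) requires the Lagrangian condition plus the modularity of $\Rep(V^G)$, which is exactly where the solvability hypothesis and \cite{CM16} enter. None of this is a gap in your argument --- it is precisely the work that \cite{EG18} carries out --- but it is worth being explicit that the ``realization'' step is the substantive part, not the categorical bookkeeping.
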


Similar to the situation with the properties of fixed points that we mentioned earlier, the assumption that $G$ is solvable is conjecturally superfluous.  The subgroup $K$ parametrizes the set of twisted modules whose $V^G$-submodules appear in the extension.

\begin{defn}
Let $V$ be a strongly holomorphic vertex operator algebra, and let $G$ be a finite group of automorphisms.    The \textbf{orbifold dual} of $V$ with respect to $G$, written $V/\!/G$, is the strongly holomorphic extension of $V^G$ attached to the pair $(G,1)$, if it exists.  If $G$ is a cyclic group generated by an element $g \in G$, we write $V/\!/g$ instead.
\end{defn}

In the case where $G$ is a cyclic group generated by an element $g$, we can describe the orbifold dual more concretely.

\begin{thm} \label{thm:vEMS} (\cite{vEMS15})
Let $V$ be a strongly holomorphic vertex operator algebra, and let $g$ be a finite order automorphism of $V$.  The orbifold dual $V/\!/g$ exists if and only if the irreducible $g$-twisted module $V(g)$ has $L_0$-spectrum in $\frac{1}{|g|}\bZ$.  If this condition holds, then each irreducible twisted $V$-module $V(g^i)$ admits a faithful action of $\bZ/|g|\bZ$ satisfying the following properties:
\begin{enumerate}
\item The decomposition of $V(g^i)$ into eigenspaces for this action is a decomposition into $|g|$ irreducible modules for the fixed-point subalgebra $V^g$.
\item The action on $V(g)$ is given by $a \mapsto e^{2\pi i aL_0}a$, meaning in particular that the fixed-point subspace is precisely the subspace on which $L_0$ acts with integral eigenvalues.
\item $V/\!/g$ decomposes as a $V^g$-module into the direct sum of fixed points $\bigoplus_{i=0}^{|g|-1} V(g^i)^{\bZ/|g|\bZ}$.
\end{enumerate}
Furthermore, there is an action of a cyclic group $\langle g^* \rangle$ of order $|g|$ on $V/\!/g$, such that $g^*$ acts on the subspace $V(g^j)^{\bZ/|g|\bZ}$ by the scalar $e^{2\pi i j/|g|}$, and $(V/\!/g)/\!/g^* \cong V$.
\end{thm}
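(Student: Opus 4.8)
The plan is to route the entire argument through the modular tensor category $\Rep(V^g)$ and its classification of holomorphic extensions. Set $n = |g|$ and $G = \langle g \rangle \cong \bZ/n\bZ$. Since $G$ is cyclic, hence solvable, Theorem~\ref{thm:modules-for-solvable-fixed-point} applies: $V^g$ is strongly rational and $\Rep(V^g) \simeq D^\alpha(\bC G)\text{-mod}$ for a single anomaly class $\alpha \in H^3(G,\bC^\times) \cong \bZ/n\bZ$. For abelian $G$ this is a pointed modular tensor category: its simple objects form a finite abelian group $A$ of order $n^2$ under fusion, and the $T$-matrix equips $A$ with a nondegenerate quadratic form $\mathfrak{q}$ valued in $\bQ/\bZ$ that records conformal weights modulo $\bZ$. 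I would first match this abstract data to the twisted sectors. By \cite{DLM97} each $g^i$ has a unique irreducible $g^i$-twisted module $V(g^i)$; uniqueness forces $g$ to preserve its isomorphism class, so $g$ acts projectively on $V(g^i)$. The simple objects of $D^\alpha(\bC G)\text{-mod}$ are indexed by pairs consisting of a group element $g^i$ and a (projective) character $\chi_j$ of $G$, so I expect exactly $n$ simple $V^g$-modules inside each $V(g^i)$, namely the $\chi_j$-eigenspaces once the projective $G$-action is rigidified.

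Next I would rigidify the action and prove properties (1) and (2). The natural normalization of the projective $G$-action on $V(g)$ is the one generated by $e^{2\pi i L_0}$ (up to an overall phase); this is literally an order-$n$ automorphism precisely when the $L_0$-spectrum of $V(g)$ lies in $\tfrac1n\bZ$, which is the hypothesis, and it reproduces the formula in (2). Decomposing each $V(g^i)$ into eigenspaces for this $\bZ/n\bZ$-action then yields property (1), each eigenspace being one of the $n$ simple $V^g$-modules counted above; irreducibility is forced by the count $n\cdot n = n^2 = |A|$ together with complete reducibility from Theorem~\ref{thm:modules-for-solvable-fixed-point}.

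For the existence criterion I would invoke the parametrization of \cite{EG18}: strongly holomorphic extensions of $V^g$ correspond to pairs $(K,\psi)$, and $V/\!/g$ is by definition the one attached to $(G,1)$, which exists exactly when the restriction of the anomaly to $K=G$ is a coboundary, i.e.\ when $\alpha = 0$ in $H^3(G,\bC^\times)\cong\bZ/n\bZ$. In the pointed picture $\alpha$ is the ``type'' of $g$, computable from the conformal weight $\rho$ of $V(g)$: one shows $\rho \in \tfrac{1}{n^2}\bZ$ in general, via the modular invariance of \cite{Z96} and the $S$-transform of \cite{DLM97}, and that the type equals $n^2\rho \bmod n$. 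Hence $\alpha = 0$ iff $n\rho \in \bZ$ iff the $L_0$-spectrum of $V(g)$, which lies in $\rho + \tfrac1n\bZ$, is contained in $\tfrac1n\bZ$. When this holds, $V/\!/g$ is the Lagrangian (maximal isotropic) subgroup $\{(g^i,\chi_0)\}_i$ of $(A,\mathfrak{q})$, and reading off its summands gives property (3), namely $V/\!/g = \bigoplus_{i=0}^{n-1} V(g^i)^{\bZ/n\bZ}$.

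Finally, for the dual automorphism I would produce $g^*$ as the residual $\bZ/n\bZ$-action that scales the summand $V(g^j)^{\bZ/n\bZ}$ by $e^{2\pi i j/n}$; one checks it is a vertex operator algebra automorphism of order $n$ whose fixed-point subalgebra is again $V^g$. Then $V$ and $V/\!/g$ are two holomorphic extensions of the same $V^g$, hence two Lagrangian subgroups of $(A,\mathfrak{q})$ --- respectively $\{(1,\chi_j)\}_j$ and $\{(g^i,\chi_0)\}_i$ --- exchanged by the symmetry swapping the two coordinate factors of $A \cong \bZ/n\bZ \times \bZ/n\bZ$; running the orbifold construction for $g^*$ therefore returns the first Lagrangian, giving $(V/\!/g)/\!/g^* \cong V$. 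The main obstacle is the third step: rigorously identifying the $H^3$-anomaly of \cite{EG18} with the arithmetic type $n^2\rho \bmod n$ and thereby converting the abstract cohomological existence condition into the stated analytic condition on the $L_0$-spectrum. This requires the fractional-weight bound $\rho\in\tfrac1{n^2}\bZ$ and careful bookkeeping of the quadratic form across all $n$ twisted sectors, whereas the remaining structural claims follow fairly formally once the pointed-category dictionary is in place.
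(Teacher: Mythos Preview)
The paper does not prove this theorem at all: it is stated with the attribution \cite{vEMS15} and no proof environment follows. The result is imported wholesale from van Ekeren--M\"oller--Scheithauer, so there is no ``paper's own proof'' to compare against.

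That said, your outline is a coherent reconstruction of the argument, and it is worth noting where it diverges from the original source and where it risks circularity. The original \cite{vEMS15} proceeds more concretely: they first establish rationality and $C_2$-cofiniteness of $V^g$ for cyclic $g$ (building on Miyamoto), then compute the $S$- and $T$-matrices of $\Rep(V^g)$ directly from modular invariance of twisted trace functions, identify the fusion group as $\bZ/n\bZ \times \bZ/n\bZ$ with an explicit quadratic form, and read off the existence condition $n\rho \in \bZ$ from the isotropy requirement on the would-be Lagrangian. Your route through Theorem~\ref{thm:modules-for-solvable-fixed-point} and the Evans--Gannon parametrization \cite{EG18} is logically later: \cite{CM16}, \cite{DNR21}, and \cite{EG18} all postdate \cite{vEMS15}, and at least the last two take the cyclic case of \cite{vEMS15} as a known input. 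So while your sketch is not wrong as a post-hoc explanation of why the theorem holds, it would be circular as an independent proof unless you supply the identification of the anomaly $\alpha$ with $n^2\rho \bmod n$ by a direct modular-invariance computation rather than by appeal to \cite{EG18}. You flag exactly this as the ``main obstacle'' in your last paragraph, which is accurate: that computation \emph{is} the heart of \cite{vEMS15}, and the rest of the structure (Lagrangian subgroups, dual automorphism $g^*$, inverse orbifold) does follow formally once it is in hand.

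One small inaccuracy: the two Lagrangians $\{(1,\chi_j)\}$ and $\{(g^i,\chi_0)\}$ are not in general exchanged by a symmetry of the quadratic group $(A,\mathfrak{q})$; the inverse-orbifold statement $(V/\!/g)/\!/g^* \cong V$ follows instead from checking directly that $(V/\!/g)^{g^*} = V^g$ and that the Lagrangian selected by $g^*$ is the one containing the untwisted sector, which is $V$.
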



\begin{lem} \label{lem:orbifold-to-leech}
The three cyclic orbifolds $V_{N(A_1^{24})}/\!/\rho(i)$, $V_{N(A_1^{24})}/\!/\rho(j)$, and $V_{N(A_1^{24})}/\!/\rho(k)$ exist and are isomorphic to $V_\Lambda$.
\end{lem}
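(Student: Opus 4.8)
The plan is to establish existence via the Theorem~\ref{thm:vEMS} criterion, reduce the three cases to one by conjugacy, identify the orbifold with $V_\Lambda$ as a module over the fixed-point subalgebra, and finally upgrade this to a vertex operator algebra isomorphism.

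\emph{Existence and reduction.} By Theorem~\ref{thm:vEMS} the orbifold $V_{N(A_1^{24})}/\!/\rho(i)$ exists precisely when the irreducible $\rho(i)$-twisted module has $L_0$-spectrum in $\frac12\bZ$. By Corollary~\ref{cor:characters-for-lattice-involutions} that module has character $4096q^{1/2} + 98304q + \cdots$, so its $L_0$-eigenvalues lie in $\frac32 + \frac12\bZ_{\geq 0} \subset \frac12\bZ$; hence the orbifold exists, and the same computation applies verbatim to $\rho(j)$. Since $\rho(i)$, $\rho(j)$, $\rho(k)$ are mutually conjugate in the image $\rho(2O) \cong S_4 \subseteq \Aut V_{N(A_1^{24})}$, being the three nontrivial elements of the normal four-group permuted transitively (Lemma~\ref{lem:image-of-binary-octahedral}), conjugation by the relevant element carries the fixed-point subalgebra, the twisted modules, and the orbifold datum of one onto those of another. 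Thus all three orbifolds exist and are mutually isomorphic, and it suffices to treat $V_{N(A_1^{24})}/\!/\rho(i)$.

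\emph{Module decomposition.} By Lemma~\ref{lem:fixed-point-for-Neimeier} the fixed-point subalgebra is $V_{N(A_1^{24})}^{\rho(i)} = V_{\Lambda_0}$, and by Theorem~\ref{thm:vEMS}(3) the orbifold decomposes as a $V_{\Lambda_0}$-module into $V_{\Lambda_0}$ itself (from the untwisted sector) together with the integral-$L_0$ part of the twisted module. I would classify the irreducible $V_{\Lambda_0}$-modules using Theorem~\ref{thm:classification-of-lattice-va-modules}: since $\Lambda_0$ has index $2$ in the unimodular lattices $N(A_1^{24})$ and $\Lambda$, its discriminant group $\Lambda_0^\vee/\Lambda_0$ has order $4$, and the existence of two distinct even index-$2$ overlattices forces it to be $(\bZ/2\bZ)^2$. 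The four irreducible modules are $V_{\Lambda_0}$, $V_{N(A_1^{24}) \setminus \Lambda_0}$, $V_{\Lambda \setminus \Lambda_0}$, and $V_{\Lambda_0 + c_3}$ for the remaining odd coset $c_3$; only the last has non-integral $L_0$-spectrum. By Corollary~\ref{cor:characters-for-lattice-involutions} the twisted module splits into exactly two irreducible $V_{\Lambda_0}$-modules, one half-integral and one integral; the half-integral summand must be $V_{\Lambda_0 + c_3}$, so the integral summand entering the orbifold is one of $V_{\Lambda_0}$, $V_{N(A_1^{24})\setminus\Lambda_0}$, $V_{\Lambda \setminus \Lambda_0}$. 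To single it out I compare lowest weights: the integral part of the twisted module begins at $L_0 = 2$ (the term $98304q$), hence has no vectors of weight $0$ or $1$. This rules out $V_{\Lambda_0}$ (it contains the vacuum) and $V_{N(A_1^{24})\setminus\Lambda_0}$ (its lowest weight is $1$, carrying the $48$ roots of $N(A_1^{24})$, all of which lie outside $\Lambda_0$ since they have inner product $\pm\frac12$ with $\frac14\alpha_{\cC}$). The only survivor is $V_{\Lambda \setminus \Lambda_0}$, of lowest weight $2$. Thus as a $V_{\Lambda_0}$-module $V_{N(A_1^{24})}/\!/\rho(i) \cong V_{\Lambda_0} \oplus V_{\Lambda \setminus \Lambda_0}$, exactly the decomposition of $V_\Lambda$; as a check the character is $\frac{\theta_{\Lambda_0} + \theta_{\Lambda \setminus \Lambda_0}}{\eta^{24}} = \frac{\theta_\Lambda}{\eta^{24}}$.

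\emph{Upgrading to a vertex operator algebra isomorphism.} The remaining step, which I expect to be the main obstacle, is that the above identifies the orbifold with $V_\Lambda$ only as a graded $V_{\Lambda_0}$-module, and this must be promoted to an isomorphism of vertex operator algebras. I would argue that both $V_\Lambda$ and the orbifold are order-$2$ simple-current extensions of $V_{\Lambda_0}$ obtained by adjoining the same module $V_{\Lambda \setminus \Lambda_0}$, whose self-fusion returns $V_{\Lambda_0}$; all such extensions are realized inside the canonical abelian intertwining algebra $\bigoplus_{\gamma \in \Lambda_0^\vee/\Lambda_0} V_{\Lambda_0 + \gamma} = V_{\Lambda_0^\vee}$, and the even sub-sum $V_{\Lambda_0} \oplus V_{\Lambda \setminus \Lambda_0}$ recovers precisely $V_\Lambda$, so the two structures coincide. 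Alternatively one can finish structurally: the orbifold is strongly holomorphic of central charge $24$ with weight-$1$ space $(V_{\Lambda_0})_1$, which is $24$-dimensional and abelian because $\Lambda_0$ is rootless and the twisted summand contributes nothing in weight $1$; a strongly holomorphic $c = 24$ vertex operator algebra with full-rank abelian weight-$1$ Lie algebra is a rank-$24$ even unimodular lattice theory, and rootlessness forces that lattice to be Leech. Either route yields $V_{N(A_1^{24})}/\!/\rho(i) \cong V_\Lambda$, and with the reduction above, the same holds for $\rho(j)$ and $\rho(k)$.
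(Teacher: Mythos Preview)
Your proof is correct, and the opening moves (reduction by conjugacy via Lemma~\ref{lem:image-of-binary-octahedral}, identification of the fixed points as $V_{\Lambda_0}$ via Lemma~\ref{lem:fixed-point-for-Neimeier}, and enumeration of the four irreducible $V_{\Lambda_0}$-modules via Theorem~\ref{thm:classification-of-lattice-va-modules}) match the paper exactly. Where you diverge is in the endgame: you pin down which irreducible sits in the integral part of the twisted sector by comparing lowest weights, and then separately argue that the resulting $V_{\Lambda_0}$-module isomorphism with $V_\Lambda$ upgrades to a vertex operator algebra isomorphism via uniqueness of simple-current extensions (or, alternatively, via a Schellekens-type classification). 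The paper instead short-circuits both steps with a single structural observation: because every irreducible $V_{\Lambda_0}$-module is already of the form $V_{\Lambda_0+\gamma}$, \emph{every} strongly holomorphic extension of $V_{\Lambda_0}$ is automatically a lattice vertex operator algebra for an even unimodular overlattice of $\Lambda_0$; there are exactly two such overlattices, so the orbifold dual, being a holomorphic extension distinct from $V_{N(A_1^{24})}$, must be $V_\Lambda$. This bypasses the twisted-sector bookkeeping and makes the ``upgrading'' step disappear, and it also yields existence for free. Your approach is more explicit and self-contained about the $L_0$-spectrum criterion of Theorem~\ref{thm:vEMS}, which the paper leaves implicit; your first upgrading argument is essentially the paper's point viewed from the other side, while your second (via the $c=24$ classification of holomorphic VOAs with abelian weight-$1$ space) is valid but invokes heavier machinery than necessary.
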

\begin{proof}
By Lemma \ref{lem:image-of-binary-octahedral}, the automorphisms $\rho(i), \rho(j), \rho(k)$ are conjugate in $\Aut(V_{N(A_1^{24})})$, so it suffices to prove the claim for $V_{N(A_1^{24})}/\!/\rho(i)$.  By Lemma \ref{lem:fixed-point-for-Neimeier}, the fixed point vertex subalgebra is $V_{\Lambda_0}$, so by Theorem \ref{thm:classification-of-lattice-va-modules} (from \cite{D93}) irreducible modules are the sums $M_\gamma = \bigoplus_{u \in \Lambda_0 + \gamma} \pi_u$ for $\gamma \in \Lambda_0^\vee/\Lambda_0$.  Thus, the strongly holomorphic extensions of $V_{\Lambda_0}$ are precisely the direct sums $V_{\Lambda_0} \oplus M_\gamma$ for $\gamma \neq 0$ such that $M_\gamma$ has integer weight, yielding a lattice vertex algebra.  One choice of $\gamma$ gives non-integer weight, one yields $V_{N(A_1^{24})}$, and the third yields $V_\Lambda$.
\end{proof}


\begin{lem}
The identification of $V_\Lambda$ with $V_{N(A_1^{24})}/\!/\rho(i)$ induces a natural action of a 4-group, generated by a lift of the involution $\rho(j)$ on ${V_{N(A_1^{24})}}^{\rho(i)}$, and an involution $\rho(i)^*$ whose fixed point subalgebra is ${V_{N(A_1^{24})}}^{\rho(i)}$.
\end{lem}
\begin{proof}
The fact that $\rho(i)^*$ is order 2 with specified fixed points is given in Theorem \ref{thm:vEMS}.  The involutions $\rho(j)$ and $\rho(k)$ coincide on this subspace, and commute with the restriction of $\rho(i)^*$, since it is the identity.  There are 2 possible lifts of $\rho(j)$ (seen as an order 2 lift of $-1$ on $\Lambda_0$ to $V_{\Lambda_0}$) to automorphisms of $V_\Lambda$, uniquely determined by where they take a lowest weight vector $v_\lambda \in \pi_\lambda$ for $\lambda \in \Lambda \setminus \Lambda_0$.  The image is necessarily a scalar times $v_{-\lambda}$, and the two choices differ by a sign, i.e., composition with $\rho(i)^*$.  Both lifts have order 2, so we obtain a 4-group.
\end{proof}

The following is a special case of the main result of \cite{S03}.

\begin{lem} \label{lem:automorphism-group-of-fixed-points}
Let $\theta$ be an order 2 lift of the $-1$ automorphism of $\Lambda$ to $V_\Lambda$.  Then, $C_{\Aut V_\Lambda}(\theta)$ has the form $2^{24}.Co_0$, and the automorphism group of $V_\Lambda^\theta$ has the form $2^{24}.Co_1$.
\end{lem}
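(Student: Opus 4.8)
The plan is to reduce the statement to the known structure $\Aut V_\Lambda = (\bC^\times)^{24}.Co_0$ (Theorem \ref{thm:automorphism-group-of-lattice-voa}) by analyzing how $\theta$ interacts with the defining exact sequence $1 \to G \to \Aut V_\Lambda \to Co_0 \to 1$, where $G = (\bC^\times)^{24} = \Hom(\Lambda,\bC^\times)$ is the connected component and $\theta$ maps to the central element $-1 \in Co_0 = \Aut\Lambda$. I would first compute the centralizer $C := C_{\Aut V_\Lambda}(\theta)$, and then obtain $\Aut V_\Lambda^\theta$ as a quotient of $C$ via the restriction map.

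For the centralizer, the key observation is that conjugation by $\theta$ preserves $G$ and acts there through the action of $-1 \in \Aut\Lambda$ on $\Hom(\Lambda,\bC^\times)$, which is inversion $\chi \mapsto \chi^{-1}$. Its fixed subgroup is the $2$-torsion $\Hom(\Lambda,\pm 1) \cong 2^{24}$, so the kernel of $C \to Co_0$ is exactly this $2^{24}$. Since $-1$ is central in $Co_0$, conjugation by $\theta$ is trivial modulo $G$, and I would show $C$ surjects onto $Co_0$: given a lift $g_0$ of $\bar g \in Co_0$, the element $c = \theta g_0 \theta^{-1} g_0^{-1}$ lies in $G$, and replacing $g_0$ by $h g_0$ multiplies $c$ by $h^{-2}$; divisibility of $G = (\bC^\times)^{24}$ lets me solve $h^2 = c$, producing a lift commuting with $\theta$. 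Hence $C$ sits in $1 \to 2^{24} \to C \to Co_0 \to 1$, i.e. $C \cong 2^{24}.Co_0$, proving the first claim.

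For the automorphism group, I would study the restriction homomorphism $C \to \Aut V_\Lambda^\theta$, which is well defined since elements commuting with $\theta$ preserve its $+1$-eigenspace $V_\Lambda^\theta$. An automorphism in its kernel fixes $V_\Lambda^\theta$ pointwise, hence by Schur's lemma acts on the irreducible $V_\Lambda^\theta$-module $V_\Lambda^{\theta,-}$ (the $-1$-eigenspace) by a scalar, which must satisfy $\lambda^2=1$ for the product $V_\Lambda^{\theta,-}\otimes V_\Lambda^{\theta,-} \to V_\Lambda^\theta$ to be preserved; the two values give $1$ and $\theta$, so the kernel is exactly $\langle\theta\rangle$. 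For surjectivity I would use the module classification: by Theorem \ref{thm:modules-for-solvable-fixed-point} applied to $G = \bZ/2$, the algebra $V_\Lambda^\theta$ has exactly four irreducible modules, namely the two $\theta$-eigenspaces of $V_\Lambda$ and the two $\langle e^{2\pi i L_0}\rangle$-eigenspaces of the twisted module $V_\Lambda(\theta)$; by Corollary \ref{cor:characters-for-lattice-involutions} their lowest conformal weights are $0$, $1$, $2$, and $3/2$, all distinct. Since every automorphism of $V_\Lambda^\theta$ fixes $\omega$ and hence preserves conformal weights, it must fix the isomorphism class of each module, in particular that of $V_\Lambda^{\theta,-}$; extending such a module isomorphism to an automorphism of $V_\Lambda = V_\Lambda^\theta \oplus V_\Lambda^{\theta,-}$ shows the restriction map is onto, and the lift automatically commutes with $\theta$ (which acts as $\pm 1$ on the two summands). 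Finally, since $\theta$ maps to the central involution $-1 \in Co_0$ and $\langle\theta\rangle \cap 2^{24} = 1$, I obtain $\Aut V_\Lambda^\theta \cong C/\langle\theta\rangle \cong 2^{24}.(Co_0/\langle -1\rangle) = 2^{24}.Co_1$.

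The main obstacle is the surjectivity step: ruling out \emph{exotic} automorphisms of $V_\Lambda^\theta$ not arising from $V_\Lambda$. This rests on two inputs — the exact count of four irreducible modules with pairwise distinct lowest weights, so that $V_\Lambda^{\theta,-}$ is rigidly pinned down and cannot be permuted with another module, and the extension of a module-level isomorphism to a genuine vertex operator algebra automorphism of $V_\Lambda$, which requires the one-dimensionality of the space of intertwining operators of type $\binom{V_\Lambda^\theta}{V_\Lambda^{\theta,-}\,\, V_\Lambda^{\theta,-}}$ supplied by the modular tensor structure. Everything else is bookkeeping with the exact sequence and divisibility of the torus.
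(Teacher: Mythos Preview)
Your proposal is correct and follows essentially the same route as the paper: compute $C_{\Aut V_\Lambda}(\theta)$ from the structure $(\bC^\times)^{24}.Co_0$ by noting that $\theta$ inverts the torus (leaving $2^{24}$ fixed) and projects to the central $-1\in Co_0$, then identify $\Aut V_\Lambda^\theta$ with $C/\langle\theta\rangle$ by using that the four irreducible $V_\Lambda^\theta$-modules have pairwise distinct characters, so every automorphism of $V_\Lambda^\theta$ lifts to $V_\Lambda$. Your write-up is in fact more explicit than the paper's---you spell out the torus-divisibility argument for surjectivity onto $Co_0$ and the Schur's-lemma computation of the kernel $\langle\theta\rangle$, both of which the paper leaves to the reader.
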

\begin{proof}
Any automorphism of a vertex operator algebra induces a permutation of the irreducible modules, and in this case, $V_\Lambda^\theta$ has 4 irreducible modules, with pairwise distinct characters.  The pairwise-distinct property implies for any automorphism $\sigma$ of $V_\Lambda^\theta$, there exists a lift to an automorphism of $V_\Lambda$.  We conclude that the automorphism group of $V_\Lambda^\theta$ is equal to the quotient $C_{\Aut V_\Lambda}(\theta)/\langle \theta \rangle$.  Because $\theta$ acts as $-1$ on the tangent space of the torus, the centralizer has torus part $(\pm 1)^{24}$, and because $\theta$ is a lift of the central element $-1$ in $Co_0$, the quotient of the centralizer by the torus part is $Co_0$.  Thus, $\Aut V_\Lambda^\theta$ has the form $2^{24}.Co_0/\langle \theta \rangle$, giving the form $2^{24}.Co_1$.
\end{proof}

\section{The monster vertex algebra}

\begin{defn}
The \textbf{monster vertex algebra} $V^\natural$ is the cyclic order 2 orbifold:
\[V^\natural = V_\Lambda/\!/ \theta,\]
where $\theta$ is an order 2 lift of the $-1$ automorphism of $\Lambda$.  That is, $V^\natural$ decomposes as the direct sum of the simple vertex operator algebra $V_\Lambda^\theta$ and its irreducible module $V_\Lambda(\theta)^\theta$.
\end{defn}


We note some basic facts.

\begin{lem}
\begin{enumerate}
\item $V^\natural$ is a holomorphic vertex operator algebra.
\item $V^\natural$ has character $J(\tau) = q^{-1} + 196884q + \cdots$.  In particular, the weight 1 subspace is zero.
\item $V^\natural$ has an order 2 automorphism $z$ whose fixed points are identified with $V_\Lambda^\theta$.
\end{enumerate}
\end{lem}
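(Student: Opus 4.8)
The plan is to verify each of the three claims using the orbifold machinery of Theorem~\ref{thm:vEMS} together with the explicit character computations already available. For the first claim, I would invoke the definition of the orbifold dual: $V^\natural = V_\Lambda/\!/\theta$ is by construction a strongly holomorphic extension of $V_\Lambda^\theta$, and strong holomorphicity in particular means it is holomorphic. The only thing that must be checked to even apply Theorem~\ref{thm:vEMS} is that the irreducible $\theta$-twisted module $V_\Lambda(\theta)$ has $L_0$-spectrum in $\frac{1}{2}\bZ$; this follows from Corollary~\ref{cor:characters-for-lattice-involutions}, where the twisted module character $2^{12}\frac{\eta(\tau)^{24}}{\eta(\tau/2)^{24}} = 4096q^{1/2} + 98304q + \cdots$ exhibits the half-integer grading explicitly.

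For the second claim, the character of $V^\natural$ is computed by combining the decomposition in Theorem~\ref{thm:vEMS}(3), namely $V^\natural = V_\Lambda^\theta \oplus V_\Lambda(\theta)^{\bZ/2\bZ}$, with the fact that the fixed-point subspace for the $\bZ/2\bZ$-action on each twisted sector is the integer-$L_0$-weight part. Concretely, I would add the integer-weight part of the graded trace $\frac{1}{2}(\chi_{V_\Lambda} + T_\theta)$, where $T_\theta = \frac{\eta(\tau)^{24}}{\eta(2\tau)^{24}}$ is the graded trace of $\theta$ from Corollary~\ref{cor:characters-for-lattice-involutions}, to the integer-weight part of the twisted module character $2^{12}\frac{\eta(\tau)^{24}}{\eta(\tau/2)^{24}}$. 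Since $V^\natural$ is holomorphic and its character is a weakly holomorphic modular function of weight $0$ for $SL_2(\bZ)$ with leading term $q^{-1}$, it must equal the unique such function $J(\tau) = q^{-1} + 0 + 196884q + \cdots$ with vanishing constant term; I would confirm the constant term vanishes and the $q^{-1}$ coefficient is $1$ from the explicit series, which then pins down the whole character and in particular shows the weight $1$ space is zero.

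The third claim is the cleanest: Theorem~\ref{thm:vEMS} provides the dual automorphism $g^*$ (here $\theta^*$, which I would rename $z$) of order $|g| = 2$ acting on $V/\!/g$, with the property that it acts by the scalar $e^{2\pi i j/2} = (-1)^j$ on the summand $V(\theta^j)^{\bZ/2\bZ}$. Thus $z$ acts as $+1$ on $V_\Lambda^\theta = V(\theta^0)^{\bZ/2\bZ}$ and as $-1$ on $V_\Lambda(\theta)^{\bZ/2\bZ}$, so its fixed-point subalgebra is exactly $V_\Lambda^\theta$, as asserted, and $(V^\natural)/\!/z \cong V_\Lambda$ recovers the reversibility statement.

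The main obstacle is the bookkeeping in the second claim: one must correctly extract the integer-weight (respectively half-integer-weight) parts of the relevant $\eta$-quotients and verify the claimed cancellations, being careful that the scalar ambiguity in the twisted-module character from the Dong--Li--Mason theorem is fixed to the value $2^{12}$ stated in Corollary~\ref{cor:characters-for-lattice-involutions}. Once the series manipulations are organized so that the constant term is seen to vanish, the identification with $J$ via modularity and holomorphicity is immediate, since holomorphic vertex operator algebras of central charge $24$ have characters that are weight-zero modular functions determined by their polar and constant parts.
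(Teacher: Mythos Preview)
Your proposal is correct and follows essentially the same approach as the paper: invoke Theorem~\ref{thm:vEMS} both for holomorphicity and for the existence and properties of the automorphism $z = \theta^*$, and compute the character from the two pieces $V_\Lambda^\theta$ and $V_\Lambda(\theta)^\theta$ via Corollary~\ref{cor:characters-for-lattice-involutions}. Your write-up is simply more detailed than the paper's terse version, and your additional appeal to modularity to pin down $J$ once the polar and constant terms are known is a harmless embellishment of what the paper leaves as ``work out the characters.''
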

\begin{proof}
For the claim about the character, it suffices to work out the characters of the subspaces $V_\Lambda^\theta$ and $V_\Lambda(\theta)^\theta$.  These can be computed from Corollary \ref{cor:characters-for-lattice-involutions}.  The order 2 automorphism $z$ is precisely $\theta^*$ as defined in Theorem \ref{thm:vEMS}.
\end{proof}

\subsection{Triality}

From Lemma \ref{lem:orbifold-to-leech}, we see that $V^\natural$ can be constructed from $V_{N(A_1^{24})}$ as a composite of 2 orbifold duals in 3 different ways: We start by taking the orbifold dual with respect to one of the three involutions $\rho(i)$, $\rho(j)$, $\rho(k)$ to get $V_\Lambda$, then we take the orbifold dual with respect to a lift of one of the other involutions.  This information can be organized in the following way:

\begin{thm} \label{thm:triality}
We have the following diagram of orbifold duals:

\[ \xymatrix{ & &  V_{N(A_1^{24})} \ar@/^/[ddll]^{\rho(i)} \ar@/^/[dd]^{\rho(j)} \ar@/^/[ddrr]^{\rho(k)} \\ \\
V_{N(A_1^{24})}/\!/\rho(i) \ar@/^/[uurr]^{\rho(i)^*} \ar@/^/[ddrr]^{z^*} & & V_{N(A_1^{24})}/\!/\rho(j) \ar@/^/[uu]^{\rho(j)^*} \ar@/^/[dd]^{x^*} & & V_{N(A_1^{24})}/\!/\rho(k) \ar@/^/[uull]^{\rho(k)^*} \ar@/^/[ddll]^{y^*} \\ \\
 & & V_{N(A_1^{24})}/\!/\rho(Q_8) \ar@/^/[uurr]^{y} \ar@/^/[uu]^{x} \ar@/^/[uull]^{z} } \] 

where the vertex operator algebras in the middle row are isomorphic to $V_\Lambda$, the vertex operator algebra on the bottom is isomorphic to $V^\natural$, and $x,y,z$ are commuting order 2 automorphisms of $V^\natural$ that generate a 4-group, which splits $V^\natural$ into a direct sum of 4 irreducible ${V_{N(A_1^{24})}}^{\rho(Q_8)}$-modules $V^{00} = {V_{N(A_1^{24})}}^{\rho(Q_8)}, V^{01}, V^{10}, V^{11}$, satisfying the following properties:
\begin{enumerate}
\item $V^{01}$ is an irreducible ${V_{N(A_1^{24})}}^{\rho(Q_8)}$-submodule of $V_{N(A_1^{24})}(\rho(i))$, on which $z$ acts by $1$ and $x,y$ act by $-1$.  It is contained in $V_{N(A_1^{24})}/\!/\rho(i)$.
\item $V^{10}$ is an irreducible ${V_{N(A_1^{24})}}^{\rho(Q_8)}$-submodule of $V_{N(A_1^{24})}(\rho(j))$, on which $x$ acts by $1$ and $y,z$ act by $-1$.  It is contained in $V_{N(A_1^{24})}/\!/\rho(j)$.
\item $V^{11}$ is an irreducible ${V_{N(A_1^{24})}}^{\rho(Q_8)}$-submodule of $V_{N(A_1^{24})}(\rho(k))$, on which $y$ acts by $1$ and $x,z$ act by $-1$.  It is contained in $V_{N(A_1^{24})}/\!/\rho(k)$.
\item The involutions $x^*, y^*,z^*$ are lifts of the $-1$ automorphism on $\Lambda$, for some identification of the corresponding orbifolds of $V_{N(A_1^{24})}$ with $V_\Lambda$.  In particular, the restriction of $z^*$ to $V_{\Lambda_0}$ is given by $\rho(j)$, a lift of the $-1$ automorphism of the lattice.
\end{enumerate}
Furthermore, the action of $S_3 \subset \Aut V_{N(A_1^{24})}$ permuting $\{ \rho(i), \rho(j), \rho(k) \}$ induces a faithful permutation action on $\{V^{01}, V^{10}, V^{11} \}$, and extends to the conjugation action of a subgroup $\Sigma \subset \Aut V^\natural$ that induces a faithful permutation action of $S_3$ on $\{ x,y,z \}$.
\end{thm}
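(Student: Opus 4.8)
The plan is to verify the diagram of orbifold duals, identify the three middle-row algebras with $V_\Lambda$ and the bottom algebra with $V^\natural$, exhibit the commuting involutions $x,y,z$ and their eigenspace decomposition, and finally produce the $S_3$-action $\Sigma$ on $V^\natural$ realizing the permutation of $\{x,y,z\}$.

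\begin{proof}[Proof sketch]
The strategy is to build everything symmetrically from the fixed-point subalgebra $W := {V_{N(A_1^{24})}}^{\rho(Q_8)}$ and exploit the $S_3$-symmetry that is already visible at the level of $V_{N(A_1^{24})}$ (Lemma \ref{lem:S3-action-on-fixed-points}). First I would establish the upper half of the diagram: Lemma \ref{lem:orbifold-to-leech} already gives that each of the three orbifold duals $V_{N(A_1^{24})}/\!/\rho(i)$, $V_{N(A_1^{24})}/\!/\rho(j)$, $V_{N(A_1^{24})}/\!/\rho(k)$ is isomorphic to $V_\Lambda$, and Theorem \ref{thm:vEMS} supplies the ``return'' involutions $\rho(i)^*$, $\rho(j)^*$, $\rho(k)^*$ whose orbifold duals recover $V_{N(A_1^{24})}$. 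So the only real work in the top half is to record that the three copies of $V_\Lambda$ all contain $W = V_{N(A_1^{24})}^{\rho(Q_8)}$ as a common subalgebra, with $V_\Lambda^{\rho(i)^*} = V_{N(A_1^{24})}^{\rho(i)}$ by the fixed-point description in Theorem \ref{thm:vEMS}(2).

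Next I would construct the bottom algebra and the involutions. The idea is that $V^\natural$ should be $V_\Lambda /\!/ z^*$ for a suitable lift $z^*$ of $-1$, but the point of triality is to see it intrinsically as a $W$-module decomposition. The group $\rho(Q_8)$ acts on $V_{N(A_1^{24})}$ with $\rho(-1)$ trivial (Lemma \ref{lem:image-of-binary-octahedral}), so the induced $V_4 = \rho(Q_8)/\langle\rho(-1)\rangle$ with nontrivial elements $\rho(i),\rho(j),\rho(k)$ acts, and $W$ is its fixed-point subalgebra. By Corollary \ref{cor:characters-for-lattice-involutions} the untwisted module $V_{N(A_1^{24})}$ and each twisted module $V_{N(A_1^{24})}(\rho(i))$, $V_{N(A_1^{24})}(\rho(j))$, $V_{N(A_1^{24})}(\rho(k))$ split into two irreducible $W$-modules according to integral versus half-integral $L_0$-eigenvalue. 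I would define $V^{00} = W$ and let $V^{01}, V^{10}, V^{11}$ be the ``other'' irreducible $W$-summand of the three twisted modules respectively, and then characterize $x,y,z$ as the order-2 automorphisms of $V^\natural := V^{00}\oplus V^{01}\oplus V^{10}\oplus V^{11}$ acting by the prescribed signs $(+,-,-)$ etc.\ on these four summands. The commuting relations and the 4-group structure are then immediate from multiplying signs, and properties (1)--(3) follow by matching each $V^{ab}$ against the twisted module it sits in, using the eigenspace description of Theorem \ref{thm:vEMS}(1)--(2). Property (4), that $x^*,y^*,z^*$ are lifts of $-1$ on $\Lambda$, follows because, restricted to $V_{\Lambda_0}$, the relevant involution acts as $\rho(j)$, which Lemma \ref{lem:fixed-point-for-Neimeier} identifies as a lift of $-1$.

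The main obstacle, and the genuinely delicate step, is the last sentence: producing the subgroup $\Sigma \subset \Aut V^\natural$ that realizes $S_3$ and induces the faithful permutation of $\{x,y,z\}$. The $S_3$ we start with (Lemma \ref{lem:S3-action-on-fixed-points}) acts on $W$ and permutes $\{\rho(i),\rho(j),\rho(k)\}$, hence permutes the three twisted modules and so permutes $\{V^{01},V^{10},V^{11}\}$ as abstract $W$-modules; the difficulty is that an automorphism of $W$ permuting irreducible modules does not automatically extend to an automorphism of the holomorphic extension $V^\natural$. To get the extension I would invoke the rigidity coming from distinct characters: the four $W$-summands of $V^\natural$ have pairwise-distinct characters within each twisted sector, so any $W$-automorphism that permutes them lifts to a $V^\natural$-automorphism (the same mechanism used in Lemma \ref{lem:automorphism-group-of-fixed-points}), and the lift is determined up to the $W$-module automorphisms, i.e.\ up to scalars on each summand. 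I would then fix these scalars compatibly by using the section of $S_3$ inside $\rho(2O)$ that already exists on $V_{N(A_1^{24})}$ (as in Lemma \ref{lem:S3-action-on-fixed-points}), transporting it through the orbifold-duality equivalences; the consistency of this transport across the three isomorphisms $V_{N(A_1^{24})}/\!/\rho(i)\cong V_\Lambda$ is exactly where one must check that the chosen signs assemble into a genuine group action of $S_3$ rather than a projective one. Faithfulness on $\{x,y,z\}$ is then clear since the permutation action on the three nontrivial summands is already faithful.
\end{proof}
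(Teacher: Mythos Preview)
The paper states this theorem without proof; it is presented as a synthesis of the preceding lemmas (the middle row from Lemma \ref{lem:orbifold-to-leech}, the duality arrows from Theorem \ref{thm:vEMS}, and the $S_3$-action on $W$ from Lemma \ref{lem:S3-action-on-fixed-points}), with the construction of $\Sigma \subset \Aut V^\natural$ deferred to the explicit computation in \cite{FLM88}. Your outline is therefore more detailed than anything the paper offers, and its overall architecture is correct.

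Two steps need repair. First, Corollary \ref{cor:characters-for-lattice-involutions} decomposes each $\rho(i)$-twisted module into irreducibles for $V_{N(A_1^{24})}^{\rho(i)} = V_{\Lambda_0}$, not for the smaller $W = V_{\Lambda_0}^{\rho(j)}$; as $W$-modules each twisted sector splits into four pieces (one for each character of the Klein four-group, by Theorem \ref{thm:modules-for-solvable-fixed-point} with trivial anomaly), and the single summand $V^{01}$ appearing in $V^\natural$ is singled out by the iterated cyclic-orbifold prescription, not by $L_0$-parity alone. Second, and more seriously, the ``distinct characters'' mechanism of Lemma \ref{lem:automorphism-group-of-fixed-points} is the wrong tool. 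That lemma uses distinctness of characters to force every automorphism of the fixed-point subalgebra to \emph{fix} each module class; here you need automorphisms of $W$ that \emph{permute} $V^{01}, V^{10}, V^{11}$, and these three modules have identical characters (indeed they must, since they lie in a single $S_3$-orbit). The correct mechanism is that $V^\natural$ is a simple-current extension of $W$, so any automorphism of $W$ that permutes the four simple currents lifts to an automorphism of $V^\natural$, well-defined up to the group $\langle x,y,z\rangle$ itself. This observation also dissolves your concern about a projective obstruction: the lifting ambiguity is the abelian group $\langle x,y,z\rangle$, which acts trivially on itself by conjugation, so any choice of lifts of the $S_3$-generators yields the same honest $S_3$-action on $\{x,y,z\}$ by conjugation, and $\Sigma$ may be taken to be the subgroup they generate. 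The sharper claim that $\Sigma$ can be chosen isomorphic to $S_3$ is made only in the remark following the theorem, where the paper again points to \cite{FLM88}.
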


One can show that the group $\Sigma$, given by lifting the $S_3$ acting on the vertex operator subalgebra ${V_{N(A_1^{24})}}^{\rho(Q_8)}$, can be chosen isomorphic to $S_3$, and together with the 4-group generates a copy of $S_4$.  Indeed, in \cite{FLM88} a pair of generating elements of $S_3 \subset \Aut V^\natural$ was explicitly constructed.



\subsection{Extensions of Conway groups}

We now consider the automorphisms of $V^\natural$ that arise directly from $V_\Lambda$.

\begin{thm}
We have the following diagram of finite groups where arrows are double covers:
\[ \xymatrix{ \hat{C} \ar[rr] \ar[dd] \ar[rd] & & C_0 \ar[dd] \\ & C \ar[rd] \\ C_T \ar[rr] & & C_1 } \]
and the groups have the following form:
\begin{enumerate}
\item $C_0$ is the centralizer of $\theta$ in $\Aut V_\Lambda$.  It has the form $2^{24}.Co_0$, where $2^{24}$ naturally identifies with $\frac{1}{2}\Lambda/\Lambda$.  This group acts faithfully on $V_\Lambda$.
\item $C_1$ is the quotient of $C_0$ by the central subgroup $\langle \theta \rangle$, and has the form $2^{24}.Co_1$.  $C_1$ is the full automorphism group of $V_\Lambda^\theta$.
\item $C_T$ is defined using the projective action of $C_1$ on the twisted module $V_\Lambda(\theta)$.  The projective action on the lowest weight space $T$ (of dimension $2^{12}$) induces an injective homomorphism $C_1 \to PGL(T)$, and the pullback $C_* \subset GL(T)$ has commutator subgroup $C_T$, which has the form $2.2^{24}.Co_1$.  This group acts faithfully on the vertex superalgebra $V_\Lambda^\theta \oplus V_\Lambda(\theta)^{\theta = -1}$.
\item $\hat{C}$ is the pullback $C_0 \times_{C_1} C_T$.  It has the form $2^2.2^{24}.Co_1$, and acts faithfully on the ``generalized vertex algebra'' $V_\Lambda \oplus V_\Lambda(\theta)$ constructed in \cite{H94}.  $C_0$ and $C_T$ are the quotients of $\hat{C}$ by two of the involutions in the central 4-group $2^2$.
\item $C$ is the quotient of $\hat{C}$ by the remaining involution in $2^2$.  $C$ has the form $2^{1+24}.Co_1$, i.e., there is an extraspecial normal subgroup $Q$, naturally identified with $\hat{\Lambda}/2\Lambda$, and the quotient by this subgroup is $Co_1$.  $C$ acts faithfully on $V^\natural$, and it is the centralizer of the distinguished involution $z \in \Aut V^\natural$ that is trivial on $V_\Lambda^\theta$, and $-1$ on $V_\Lambda(\theta)^\theta$.
\end{enumerate}
\end{thm}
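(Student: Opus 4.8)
The plan is to assemble the five groups from the bottom of the cube upward, treating Lemma~\ref{lem:automorphism-group-of-fixed-points} as the foundation and the Frenkel--Lepowsky--Meurman description of the twisted module $V_\Lambda(\theta)$ as the engine for everything above $C_1$. Parts (1) and (2) are largely a repackaging of Lemma~\ref{lem:automorphism-group-of-fixed-points}, which already supplies $C_0 = C_{\Aut V_\Lambda}(\theta) \cong 2^{24}.Co_0$ and $C_1 = C_0/\langle\theta\rangle \cong 2^{24}.Co_1 = \Aut V_\Lambda^\theta$; faithfulness of $C_0$ on $V_\Lambda$ is automatic from $C_0 \subset \Aut V_\Lambda$. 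The one new point is the identification of the normal $2^{24}$ with $\tfrac12\Lambda/\Lambda$: I would observe that this subgroup is precisely the $2$-torsion of the distinguished torus, i.e.\ $\Hom(\Lambda,\pm1)$, since $\theta$ acts by $-1$ on the torus and centralizes exactly its involutions, and then use the pairing together with unimodularity of $\Lambda$ to get $\Hom(\Lambda,\pm1)\cong \Lambda/2\Lambda \cong \tfrac12\Lambda/\Lambda$.

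For part (3), the input is the structure $V_\Lambda(\theta)\cong T\otimes\Sym(t^{-1/2}\fh[t^{-1}])$ with $\fh=\Lambda\otimes\bC$ and $T$ the irreducible dimension-$2^{12}$ module for the extraspecial-type group $\hat\Lambda/2\Lambda$. Because every element of $C_0$ commutes with $\theta$ and $V_\Lambda(\theta)$ is the unique irreducible $\theta$-twisted module, each such automorphism acts on $V_\Lambda(\theta)$, and hence on the lowest weight space $T$, uniquely up to scalar; this produces the projective action $C_1\to PGL(T)$. I would argue injectivity from the fact that $T$ is a faithful $\hat\Lambda/2\Lambda$-module while the $Co_1$-quotient embeds via its faithful orthogonal action on $\Lambda/2\Lambda$, so that no nontrivial element of $C_1$ can act by a scalar. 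Passing to the preimage $C_*\subset GL(T)$, a $\bC^\times$-central extension of $C_1$, the image of the translation $2^{24}$ together with the scalar $\pm1$ realizes exactly the extraspecial group $Q=\hat\Lambda/2\Lambda$ on $T$, and the commutator relation $[\hat a,\hat b]=(-1)^{(a,b)}$ shows the only commutator value among these generators is $-1$. Using that $C_1$ is perfect (which I would verify from simplicity of $Co_1$ and vanishing of the $Co_1$-coinvariants of $\Lambda/2\Lambda$), the commutator subgroup $C_T=[C_*,C_*]$ surjects onto $C_1$ with kernel $\langle -1\rangle$, giving the nonsplit double cover $2.2^{24}.Co_1$; faithfulness on the super vertex algebra $V_\Lambda^\theta\oplus V_\Lambda(\theta)^{\theta=-1}$ follows since $C_T$ is already faithful on $T$.

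For parts (4) and (5), I would form $\hat C = C_0\times_{C_1}C_T$, the fiber product of the two $\bZ/2$-central extensions $C_0$ and $C_T$ of $C_1$, which is therefore a $2^2$-central extension $2^2.2^{24}.Co_1$ acting on Huang's generalized vertex algebra $V_\Lambda\oplus V_\Lambda(\theta)$ via the compatible actions of $C_0$ on $V_\Lambda$ and $C_T$ on $V_\Lambda(\theta)$. The central $2^2$ has three involutions; quotienting by the two that collapse the action back onto $V_\Lambda$ and onto the super algebra recovers $C_0$ and $C_T$, and quotienting by the remaining diagonal involution yields $C$, which by construction acts on $V^\natural = V_\Lambda^\theta\oplus V_\Lambda(\theta)^\theta$. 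The normal subgroup $Q=\hat\Lambda/2\Lambda$ is extraspecial $2^{1+24}$ because the commutator form $[\hat a,\hat b]=(-1)^{(a,b)}$ is nondegenerate mod $2$ by unimodularity, with quotient $Co_1$, giving $C\cong 2^{1+24}.Co_1$. Finally $C$ plainly centralizes $z=\theta^*$ (Theorem~\ref{thm:vEMS}); conversely any automorphism of $V^\natural$ commuting with $z$ preserves the $z$-eigenspaces $V_\Lambda^\theta$ and $V_\Lambda(\theta)^\theta$ and so arises from this construction, whence $C=C_{\Aut V^\natural}(z)$.

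The main obstacle is the extension-type computation in part (3): showing that $C_T$ is genuinely the \emph{nonsplit} double cover $2.2^{24}.Co_1$ rather than a split extension. This is precisely where the projective (spin-type) representation theory enters, since it amounts to checking that the scalar $-1$ forced by the extraspecial commutator relation cannot be absorbed by rescaling the generators, equivalently that $C_1$ is perfect so that no abelian quotient lets the cocycle trivialize. Verifying the perfectness input and the nondegeneracy of the mod-$2$ form are the steps I expect to require the most care, though both are standard facts about $Co_1$ and the Leech lattice.
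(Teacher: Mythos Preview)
Your proposal is correct and covers all the claims, but it builds the diagram in the opposite order from the paper. You construct $C_T$ first, via the projective action on $T$ and the commutator subgroup of $C_*$, and then form $\hat C$ as the fiber product $C_0\times_{C_1}C_T$; this obliges you to check that $C_1$ is perfect and that $[C_*,C_*]\cap\bC^\times$ is exactly $\{\pm 1\}$, which is why you flag the Schur-multiplier-type issue at the end. The paper instead builds $\hat C$ \emph{first} and reads off the three quotients. The key observation there is that the four irreducible $V_\Lambda^\theta$-modules are all simple currents forming a fusion group isomorphic to $(\bZ/2\bZ)^2$, so any automorphism of $V_\Lambda^\theta$ lifts to the direct sum $V_\Lambda\oplus V_\Lambda(\theta)$ uniquely up to a character of that fusion group; this immediately gives $\hat C$ as a $2^2$-central extension of $C_1$, with the three involutions in the center corresponding to the three nontrivial simple-current extensions $V_\Lambda$, $V_\Lambda^\theta\oplus V_\Lambda(\theta)^{\theta=-1}$, and $V^\natural$. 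The groups $C_0$, $C_T$, $C$ are then just the images of $\hat C$ on these three extensions, and the double-cover diagram is automatic.

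What each approach buys: the paper's simple-current argument gets the size of the central extension (exactly $2^2$, not something larger) for free from fusion compatibility, bypassing your perfectness and commutator-kernel computations entirely. Your approach, on the other hand, tracks the theorem statement more literally---in particular you actually engage with the definition of $C_T$ as $[C_*,C_*]$, whereas the paper's proof essentially defines $C_T$ as the quotient of $\hat C$ acting on the super vertex algebra and leaves the match with the commutator-subgroup description implicit. Your argument for $C=C_{\Aut V^\natural}(z)$ via restriction to $V_\Lambda^\theta$ is the same as the paper's.
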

\begin{proof}
The identification of $C_0$ and $C_1$ follow from Lemma \ref{lem:automorphism-group-of-fixed-points}.

The ``generalized vertex algebra'' $V_\Lambda \oplus V_\Lambda(\theta)$ is the direct sum of the 4 irreducible $V_\Lambda^\theta$-modules.  Each irreducible module is a simple current, meaning fusion with each irreducible module yields a permutation on the irreducible modules, and the nontrivial simple currents have order 2.  Thus, automorphisms of $V_\Lambda^\theta$ lift to an action of a $2^2$-central extension, where the center acts by scalars $\pm 1$ on the irreducible $V_\Lambda^\theta$-modules in a manner compatible with fusion.  We call this central extension $\hat{C}$, and we have the diagram of double covers coming from adjoining simple currents.

We have already identified $C_0$ as the quotient that naturally acts on $V_\Lambda$, so among the groups $C_T$ and $C$, one of them naturally acts faithfully on $V_\Lambda^\theta \oplus V_\Lambda(\theta)^{\theta = -1}$ and one of then naturally acts faithfully on $V^\natural$.  To distinguish them, we note that $T$ is the weight $3/2$ space in $V_\Lambda(\theta)$, so the linear action of $C_T$ on $T$ extends to an action on $V_\Lambda(\theta)^{\theta = -1}$.

We now have a subgroup $C$ of $\Aut V^\natural$, with a distinguished central element $z$ that acts trivially on $V_\Lambda^\theta$ and by $-1$ on $V_\Lambda(\theta)^\theta$.  The centralizer of $z$ in $\Aut V^\natural$ is then given by those automorphisms that preserve the decomposition into the direct sum of $V_\Lambda^\theta$ and $V_\Lambda(\theta)^\theta$.  Since the action of $C$ preserves this decomposition, $C \subseteq C_{\Aut V^\natural}(z)$.  Conversely, restriction to $V_\Lambda^\theta$ induces a homomorphism from $C_{\Aut V^\natural}(z)$ to $C_1$ with kernel generated by $z$, and the restriction to $C$ is surjective with kernel containing $z$.  Thus, $C = C_{\Aut V^\natural}(z)$.
\end{proof}

We briefly study the structure of the finite group $C$.

\begin{lem} \label{lem:normal-subgroups-of-C}
The only normal subgroups of $C$ are $\{1\}$, $\langle z \rangle$, $Q$, and $C$.
\end{lem}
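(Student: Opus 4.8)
The plan is to use the structural description of $C$ as $2^{1+24}.Co_1$ established in the preceding theorem, where $Q \cong \hat{\Lambda}/2\Lambda$ is the extraspecial normal $2$-subgroup with center $\langle z \rangle$, and $C/Q \cong Co_1$. Let $H$ be a nontrivial normal subgroup of $C$. The key external input is that $Co_1$ is simple (Theorem from \cite{C69}), so the image of $H$ in the quotient $C/Q \cong Co_1$ is either trivial or all of $Co_1$. This dichotomy organizes the whole argument: in the first case $H \subseteq Q$, and in the second case $H$ surjects onto $Co_1$ and we must show $H = C$.

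First I would handle the case $H \subseteq Q$. Here $H$ is a normal subgroup of $C$ contained in the extraspecial group $Q$. Since $H$ is normalized by $C$ and $C$ acts on $Q/\langle z \rangle \cong \hat{\Lambda}/2\Lambda$-modulo-center $\cong (\bZ/2\bZ)^{24}$ through $Co_1$, and this action is just the action of $Co_1$ on $\Lambda/2\Lambda \otimes \bF_2$, I would invoke that this is an irreducible $\bF_2[Co_1]$-module. Therefore the image of $H$ in $Q/\langle z \rangle$ is either trivial or everything. If trivial, then $H \subseteq \langle z \rangle$, giving $H = \langle z \rangle$ (as $H$ is nontrivial). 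If the image is all of $Q/\langle z \rangle$, then $H = Q$ since $H$ contains $z$ (a nontrivial normal subgroup of $C$ meeting $Q$ nontrivially must contain the center $\langle z\rangle$, because $z$ is central in $C$ and $Q$ is extraspecial, so any normal subgroup hitting $Q$ contains its commutators). This yields $H \in \{\langle z\rangle, Q\}$ in this case.

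Next, in the case where $H$ surjects onto $Co_1$, I would argue $H = C$. Consider $H \cap Q$, which is normal in $C$ and contained in $Q$; by the irreducibility argument just used, $H \cap Q \in \{1, \langle z\rangle, Q\}$. If $H \cap Q = Q$ then $H \supseteq Q$ and $H$ surjects onto $C/Q$, so $H = C$ and we are done. The obstacle is ruling out $H \cap Q \in \{1, \langle z\rangle\}$, which would force $H$ to be a proper complement or near-complement to $Q$. I would show this cannot happen by a commutator computation: since $H$ maps onto $Co_1$ and $Q$ is normal, $[H, Q] \subseteq H \cap Q$; but the action of $Co_1$ on $Q/\langle z\rangle$ is nontrivial (indeed faithful and irreducible), so $[H, Q]$ cannot be contained in $\langle z\rangle$ unless the induced action is trivial, a contradiction. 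Hence $[H,Q]$ generates all of $Q$ modulo center, forcing $H \cap Q = Q$.

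The main obstacle I anticipate is the final commutator/complement step: cleanly showing that a normal subgroup surjecting onto $Co_1$ must contain $Q$, which amounts to showing the extension $2^{1+24}.Co_1$ has no proper normal complement to its action on $Q$. The cleanest route is the irreducibility of $Q/\langle z\rangle \cong (\bZ/2\bZ)^{24}$ as an $\bF_2[Co_1]$-module together with the non-centrality of the $Co_1$-action, both of which come from $Co_0$ acting on $\Lambda/2\Lambda$; I would cite this irreducibility as a known property of the Leech lattice modulo $2$ (part of the 19th/20th century lattice input the paper permits). With that in hand, every case collapses to one of $\{1\}, \langle z\rangle, Q, C$, completing the proof.
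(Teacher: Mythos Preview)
Your proposal is correct and follows essentially the same approach as the paper: the paper's proof is a one-line sketch invoking precisely the two ingredients you identify, namely simplicity of $Co_1$ and irreducibility of the $Co_1$-action on $Q/\langle z\rangle$ (phrased there as the nonexistence of a $Co_1$-stable proper subgroup of $Q$ strictly containing $\langle z\rangle$). Your version simply expands the standard details that the paper leaves implicit.
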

\begin{proof}
This follows from the simplicity of $Co_1$ and the nonexistence of a $Co_1$-stable proper subgroup of $Q$ strictly containing $\langle z \rangle$.
\end{proof}

\begin{lem} \label{lem:commutes-with-x}
An element $t \in Q$ commutes with $x$ if and only if its image in $Q/\langle z \rangle = \frac{1}{2}\Lambda/\Lambda$ lies in the image of $\frac{1}{2}\Lambda_0 \subset \frac{1}{2}\Lambda$.
\end{lem}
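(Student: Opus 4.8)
The plan is to locate $x$ inside the normal extraspecial subgroup $Q$ and then read off the centralizer from the commutator form carried by $Q$. Since $x$ and $z$ lie in the abelian $4$-group of Theorem \ref{thm:triality}, they commute, so $x \in C = C_{\Aut V^\natural}(z)$ and hence $x$ normalizes $Q$. Recall from the structure of $C$ that $Q \cong \hat{\Lambda}/2\Lambda$ is extraspecial with center $\langle z\rangle$, and that its defining commutator relation $[\hat{a},\hat{b}] = (-1)^{(a,b)}$ equips $Q/\langle z\rangle = \frac12\Lambda/\Lambda \cong \Lambda/2\Lambda$ with the alternating $\bF_2$-form $B(\bar{a},\bar{b}) = (a,b) \bmod 2$; this form is alternating by evenness and nondegenerate by unimodularity. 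Because $\Lambda_0$ has index $2$ in $\Lambda$ and $\Lambda$ is unimodular, $\Lambda_0 = \{u \in \Lambda : (u,w) \in 2\bZ\}$ for some $w \in \Lambda$ with $\bar{w} \neq 0$, so the image of $\frac12\Lambda_0$ in $\frac12\Lambda/\Lambda$ is exactly the hyperplane $\bar{w}^\perp$. With these identifications the claim reduces to showing $[t,x] = z^{B(\bar{t},\bar{w})}$, which I will obtain by pinning down $\bar{x} = \bar{w}$.

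First I would restrict everything to the subalgebra $V_\Lambda^\theta = (V^\natural)^z$. The restriction homomorphism $\mathrm{res}\colon C \to C_1 = \Aut V_\Lambda^\theta$ is surjective with kernel $\langle z\rangle$, and it carries $Q$ onto the normal torus $2^{24} = \frac12\Lambda/\Lambda$ of $C_1$, sending a class $\bar{\mu}$ to the torus involution $s_\mu$ that acts on each $\pi_u$ by $(-1)^{(\mu,u)}$. On the other hand, $x$ preserves the decomposition $V_\Lambda^\theta = V^{00} \oplus V^{01}$ and acts on it as an involution whose $(+1)$-eigenspace is the fixed subalgebra $V^{00}$ and whose $(-1)$-eigenspace is $V^{01}$ (both facts read off from Theorem \ref{thm:triality}). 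Moreover, by Lemma \ref{lem:fixed-point-for-Neimeier} together with part (4) of Theorem \ref{thm:triality}, one has $V^{00} = (V_{\Lambda_0})^\theta$ and $V^{01} = \left(\bigoplus_{u \in \Lambda \setminus \Lambda_0} \pi_u\right)^\theta$.

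The key step is then a rigidity observation: the torus involution $s_w$ acts by $+1$ on $(V_{\Lambda_0})^\theta = V^{00}$ and, since $(w,u)$ is odd exactly when $u \in \Lambda \setminus \Lambda_0$, by $-1$ on $V^{01}$. Thus $s_w$ and $x|_{V_\Lambda^\theta}$ have identical $(\pm 1)$-eigenspaces and therefore coincide. Because $\mathrm{res}(x) = s_w$ lies in the torus $\mathrm{res}(Q)$ and $\ker(\mathrm{res}) = \langle z\rangle \subseteq Q$, we conclude $x \in \mathrm{res}^{-1}(\mathrm{res}(Q)) = Q$, with image $\bar{x} = \bar{w}$ in $Q/\langle z\rangle$. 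The extraspecial commutator relation now gives, for $t \in Q$ with image $\bar{\mu}$, that $[t,x] = z^{B(\bar{t},\bar{w})} = z^{(\mu,w) \bmod 2}$, so $t$ commutes with $x$ if and only if $(\mu,w)$ is even, i.e.\ $\bar{t} \in \bar{w}^\perp$, i.e.\ the image of $t$ in $\frac12\Lambda/\Lambda$ lies in the image of $\frac12\Lambda_0$.

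The main obstacle is precisely the identification $x|_{V_\Lambda^\theta} = s_w$, and through it the conclusion $x \in Q$ with $\bar{x} = \bar{w}$. It is essential because $x$ automatically commutes with every $t \in Q$ on the untwisted sector $V_\Lambda^\theta$, where both act by scalars on each $\theta$-fixed summand $(\pi_u \oplus \pi_{-u})^\theta$; hence the entire content of the lemma lives in the twisted sector $V_\Lambda(\theta)^\theta$, on which $Q$ acts noncommutatively through its irreducible $2^{12}$-dimensional representation. Rather than computing the triality involution $x$ explicitly on that representation — exactly the kind of calculation we wish to avoid — the argument above converts the twisted-sector question into the single structural fact that $x$ is an involution of $Q$ with prescribed image, after which the commutator form of the extraspecial group finishes the proof.
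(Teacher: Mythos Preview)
Your proof is correct and follows essentially the same approach as the paper: both restrict $x$ to $(V^\natural)^z = V_\Lambda^\theta$, identify that restriction with a specific torus involution (the paper names the explicit vector $\frac{1}{4}\alpha_{\cC}$ where you use an abstract $w \in \Lambda$ cutting out $\Lambda_0$), deduce $x \in Q$ with known image in $Q/\langle z\rangle$, and then read off the centralizer from the commutator pairing on $Q$. The paper's ``commutator rule for double covers of lattices'' is exactly your extraspecial relation $[t,x] = z^{B(\bar t,\bar w)}$.
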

\begin{proof}
We first note that the property of commuting with $x$ does not depend on multiplication by $z$, since $z$ commutes with $x$.  We therefore consider the image of $t$ in $Q/\langle z \rangle$, identified with $\frac{1}{2}\Lambda/\Lambda$, and let $h \in \frac{1}{2}\Lambda$ denote a lift.

Restricting $x$ to $(V^\natural)^z = V_\Lambda^\theta$ yields the automorphism that is identity on $(\pi_u + \pi_{-u})^\theta$ for $u \in \Lambda_0$ and $-1$ on the remaining modules, so this restriction is equal to the automorphism given by restriction of $e^{2\pi i (\frac{1}{4}\alpha_{\cC})_0}$ on $V_\Lambda$.  We conclude that $x$ lies in $Q$, since the image of $x$ under the double cover $C \to 2^{24}.Co_1$ lies in the normal subgroup $2^{24}$.  Furthermore, setting $s = \frac{1}{2}\alpha_{\cC}$, we see that the image of $x$ has the form $e^{2\pi i s_0}$, where $(2s, 2h) \in 2\bZ$ if and only if $h \in \frac{1}{2}\Lambda_0$.  By the commutator rule for double covers of lattices, we see that $e^{2\pi i s_0}$ and $e^{2\pi i h_0}$ lift to commuting elements $x$ of $Q$ if and only if $h \in \frac{1}{2}\Lambda_0$.
\end{proof}

\subsection{Finiteness}

Our proof that $\Aut V^\natural$ is a finite group is surprisingly easy - compare with the proof in \cite{T84} or \cite{G82}.

\begin{thm}
The group $\Aut V^\natural$ of all vertex operator algebra automorphisms of $V^\natural$ is finite.
\end{thm}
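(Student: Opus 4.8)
The plan is to show that $\Aut V^\natural$ is a complex linear algebraic group whose Lie algebra is trivial, and then to invoke the fact, recalled in the introduction, that a complex algebraic group with trivial Lie algebra is finite. Everything rests on the single input that the weight-$1$ subspace vanishes, $V_1^\natural = 0$, which we read off from the character $J(\tau) = q^{-1} + 196884q + \cdots$.

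First I would argue that $\Aut V^\natural$ is a linear algebraic group over $\bC$. Since $V^\natural$ is strongly holomorphic it is $C_2$-cofinite, hence finitely generated as a vertex algebra; because $V_1^\natural = 0$ it is in fact generated by its weight-$2$ Griess subspace $V_2^\natural$, but for the argument any finite-dimensional generating subspace $W$ suffices. Every automorphism fixes $\omega$, hence commutes with $L_0$ and preserves the grading, so it is determined by its restriction to $W$; this gives a faithful representation $\Aut V^\natural \hookrightarrow GL(W)$. The image is exactly the set of $g \in GL(W)$ fixing the restriction of $\omega$ and compatible with all the residue products $u_n v$ among generators, re-expressed inside $W$ via the generation. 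These are polynomial conditions on the entries of $g$, so the image is Zariski closed and $\Aut V^\natural$ is a linear algebraic group.

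Next I would identify the Lie algebra, i.e.\ the tangent space at the identity, which is the space of grading-preserving derivations of $V^\natural$ that annihilate $\omega$. The correspondence $a \mapsto a_0$ embeds $V_1^\natural$ into this space (each $a_0$ preserves weight, is a derivation of every residue product by the commutator formula, and satisfies $a_0\omega = 0$ by a short skew-symmetry computation), and by the general structure theory of automorphism groups of CFT-type vertex operator algebras it is onto — this is precisely the phenomenon recorded for lattice vertex algebras in Theorem~\ref{thm:automorphism-group-of-lattice-voa}, where the Lie algebra of the identity component equals the weight-$1$ subspace. Thus $\Lie(\Aut V^\natural) \cong V_1^\natural = 0$, so $\Aut V^\natural$ is $0$-dimensional and therefore finite.

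The algebraicity bookkeeping and the correspondence $a\mapsto a_0$ are routine (indeed the forward direction is vacuous here, since there are no weight-$1$ vectors to begin with). The main obstacle is the converse half: showing that $\Der(V^\natural)_0$, the grading-preserving $\omega$-fixing derivations, is spanned by weight-$1$ modes and hence \emph{vanishes} — equivalently, ruling out any ``exotic'' infinitesimal symmetry of $V^\natural$ not coming from $V_1^\natural$. This is where the genuine content lies, and I would dispatch it by the general result that for a CFT-type vertex operator algebra the derivations commuting with the Virasoro action and fixing $\omega$ are inner of the form $a_0$; it is exactly this identification that lets the vanishing $V_1^\natural = 0$ collapse the entire automorphism group to a finite set.
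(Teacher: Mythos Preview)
Your proposal is correct but takes a genuinely different route from the paper. Both arguments begin identically: $V^\natural$ is finitely generated, automorphisms preserve an invariant finite-dimensional generating subspace, and the compatibility conditions are polynomial, so $\Aut V^\natural$ is a Zariski-closed subgroup of some $GL_n(\bC)$. The divergence is in killing the Lie algebra. You invoke the general structural fact (essentially \cite{DG01}) that for a simple CFT-type vertex operator algebra the space of grading-preserving derivations annihilating $\omega$ is exactly $\{a_0 : a \in V_1\}$, and then read off $V_1^\natural = 0$ from the character. The paper instead uses triality: the three conjugate involutions $x,y,z$ each have finite centralizer $C$, so $\Ad(x)$, $\Ad(y)$, $\Ad(z)$ each act as $-1$ on the Lie algebra (the $+1$-eigenspace would be the Lie algebra of a finite group); but $xy=z$ forces $\Ad(z) = (-1)(-1) = +1$ as well, whence the Lie algebra is zero.

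Your approach is shorter and works uniformly for any simple CFT-type vertex operator algebra with $V_1 = 0$, but it imports the surjectivity of $a \mapsto a_0$ onto derivations as a black box, and your gesture toward Theorem~\ref{thm:automorphism-group-of-lattice-voa} is only an analogy, since that theorem is specific to lattice vertex operator algebras and $V^\natural$ is not one. The paper's argument is entirely internal to the structures already built (the $4$-group $\langle x,y,z\rangle$ and the identification $C_{\Aut V^\natural}(z)\cong C$), which fits its stated aim of minimizing appeals to general theory; the cost is that triality and the finiteness of $C$ must already be in hand.
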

\begin{proof}
By Theorem 3.8 in \cite{L98}, all strongly rational vertex operator algebras are finitely generated, so $V^\natural$ is finitely generated.  By taking the sum of weight subspaces containing a generating set, we see that $V^\natural$ is generated by a finite dimensional subspace that is invariant under all automorphisms.

The condition that a linear transformation on a generating subspace induces an automorphism of the vertex operator algebra is given by polynomial identities on the coefficients of multiplication, so $\Aut V^\natural$ embeds as a Zariski closed subgroup of $GL_n(\bC)$ for some $n$.

We now show that the Lie algebra of this group is trivial.  Recall that we have distinguished involutions $x,y,z$ that are mutually conjugate, and their centralizers are finite (in particular, isomorphic to $C$).  Thus, conjugation by any of these elements acts as $-1$ on the Lie algebra of $\Aut V^\natural$, hence as inversion on the connected component of identity.  However, the equation $xy = z$ implies $z$ also acts as identity on the Lie algebra, so the Lie algebra of $\Aut V^\natural$ is trivial.

We conclude that $\Aut V^\natural$ is a zero dimensional linear algebraic group, hence finite.
\end{proof}

\subsection{Simplicity}

We now show that $\Aut V^\natural$ is simple.  Since we have a large subgroup $C = C_{\Aut V^\natural}(z)$ which has relatively few normal subgroups, we will consider the possible intersections between $C$ and a normal subgroup of $\Aut V^\natural$.

\begin{lem} \label{lem:case-intersection-is-trivial}
Let $H$ be a normal subgroup of $\Aut V^\natural$.  If $H \cap C$ is trivial, then $H$ is trivial.
\end{lem}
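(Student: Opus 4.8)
The plan is to show that if $H$ is a normal subgroup of $\Aut V^\natural$ meeting the centralizer $C = C_{\Aut V^\natural}(z)$ trivially, then $H$ must be trivial. The key structural fact to exploit is that $z$ is an involution whose centralizer is precisely $C$, and that normality of $H$ forces a strong commutation constraint between elements of $H$ and $z$.

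First I would observe that since $H$ is normal, for any $h \in H$ the commutator $[h,z] = h^{-1}(z^{-1}hz)$ lies in $H$ (as $z^{-1}hz \in H$), while simultaneously $[h,z] = (h^{-1}z^{-1}h)z$ shows it is a product of $z$ with a conjugate of $z^{-1}$, so it lies in the subgroup generated by conjugates of $z$. More usefully, I would consider the element $hzh^{-1}$: this is an involution (a conjugate of $z$), and the product $z \cdot (hzh^{-1})$ together with normality should be analyzed. The cleanest route: since $z^2 = 1$, consider $w = h^{-1}zh \cdot z$; because $H \trianglelefteq \Aut V^\natural$ we have $h^{-1}zh = z \cdot (z^{-1}h^{-1}zh)$ and the second factor $[z,h]^{-1} \in H$, so $h^{-1}zh \in zH$. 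Thus the conjugate involution $h^{-1}zh$ and $z$ differ by an element of $H$.

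Next I would use the triviality of $H \cap C$ to force $h$ to commute with $z$. The idea is that if $h \in H$ satisfies $hzh^{-1} = z$, then $h \in C_{\Aut V^\natural}(z) = C$, so $h \in H \cap C = \{1\}$, giving $h = 1$. Therefore it suffices to show that every $h \in H$ does commute with $z$, i.e. that $H \subseteq C$. To establish this, I would examine the conjugate involution $z' = h z h^{-1}$, which lies in $z H$ by the previous paragraph. Since $z' \in zH$ and $z' $ is an involution, the element $z z' = [z,h^{-1}]^{-1}$ (up to rearrangement) lies in $H$; on the other hand $z z'$ is a product of two conjugate involutions. The main leverage is that $z$ and $z'$ commute: indeed $z$ centralizes the element $zz' \in H$ only if... here I would argue that $z$ normalizes $H$ (trivially, since $H$ is normal in the whole group), and the coset $zH$ contains the single involution conjugate to $z$ obtainable this way, forcing $z' = z$.

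\emph{The main obstacle} I anticipate is making rigorous the step that forces $z' = z$, i.e. ruling out the possibility of a genuinely distinct $H$-translate of $z$ that is still an involution. The clean argument is likely the following: for $h \in H$, set $u = [z, h] = z^{-1} h^{-1} z h \in H$ by normality. Then $h^{-1} z h = z u$, and squaring the involution $h^{-1} z h$ gives $1 = (zu)^2 = z u z u = z u z^{-1} \cdot u = (z u z^{-1}) u$, so $z u z^{-1} = u^{-1}$, meaning $z$ inverts $u$. Now $z$ inverts $u \in H$; consider the element $u z$: it is an involution iff $u z u z = 1$ iff $z^{-1} u^{-1} z = u$, but we just showed $z u z^{-1} = u^{-1}$, which is exactly this condition, so $uz$ is an involution lying in $Hz$. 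The element $zu = (uz)^{-1}\cdot\text{(conjugate)}$ and $u$ itself: since $z$ inverts $u$, the product $z$ and $u$ generate a dihedral group, and $u \in H$ while $z \notin H$ (as $z \in C$ and $H \cap C = 1$ with $z \neq 1$). I expect the resolution is to show directly that $u = 1$: because $z$ inverts $u$, the involution $z$ lies in the normalizer of $\langle u \rangle$; but any power of $u$ inverted by $z$ and lying in $H$ must, when we pass to $C = C(z)$, be centralized, forcing $u \in H \cap C = 1$. Completing this normalizer-versus-centralizer dichotomy cleanly is the delicate point, and I would verify it using that $z$ inverting $u$ means $u^2$ is centralized by $z$, so $u^2 \in H \cap C = 1$, hence $u$ is an involution inverted by $z$ (thus commuting with $z$), giving $u \in H \cap C = 1$ and therefore $h \in C$, so $H \subseteq C$ and $H = H \cap C = 1$.
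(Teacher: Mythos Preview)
Your proposal has a genuine error at the decisive step. You claim that ``$z$ inverting $u$ means $u^2$ is centralized by $z$,'' but if $zuz^{-1}=u^{-1}$ then $zu^2z^{-1}=(zuz^{-1})^2=u^{-2}$, so $z$ inverts $u^2$ as well; it does not centralize it unless $u^4=1$. Thus the chain ``$u^2\in H\cap C=1\Rightarrow u$ is an involution $\Rightarrow u\in H\cap C=1$'' never gets started. Concretely, take $G=S_3$, let $z$ be a transposition, and let $H=A_3$: then $H\trianglelefteq G$, $H\cap C_G(z)=1$, yet $H\neq 1$. This shows that no argument using only the single involution $z$ can force $H\subseteq C$.

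The paper's proof supplies exactly the extra ingredient your argument is missing: the triality involutions $x,y,z$ established in Theorem~\ref{thm:triality}, which are mutually conjugate and satisfy $xy=z$. Since $H\cap C=1$, conjugation by $z$ is a fixed-point-free automorphism of order $2$ on the finite group $H$; the classical argument (the map $a\mapsto zaz^{-1}a^{-1}$ is injective, hence surjective, so every element of $H$ has the form $a^{-1}\cdot zaz^{-1}$ and is inverted by $z$) shows $z$ acts by inversion on $H$. Because $x$ and $y$ are conjugate to $z$ and $H$ is normal, they too act by inversion on $H$. But then $z=xy$ acts as inversion composed with inversion, i.e.\ as the identity, forcing $H=1$. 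Your commutator manipulation correctly isolates elements $u\in H$ inverted by $z$, but to finish you must either invoke the full fixed-point-free involution theorem and then use $xy=z$, or otherwise bring $x$ and $y$ into the argument.
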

\begin{proof}
If $H \cap C = \{1\}$, then conjugation by $z$ is an involution on $H$ that has no non-identity fixed points.  By a standard result (using injectiveness hence bijectiveness of the map $a \mapsto zaza^{-1}$ on $H$), this implies $H$ is abelian of odd order and $z$ acts by inversion.  The same is true of $x$ and $y$, since they are conjugate to $z$ and $H$ is normal.  However, $xy = z$, so $z$ also acts by identity, and $H$ is necessarily trivial.
\end{proof}


\begin{lem} \label{lem:case-intersection-is-cyclic}
If $H$ is a normal subgroup of $\Aut V^\natural$, then $H \cap C \neq \langle z \rangle$.
\end{lem}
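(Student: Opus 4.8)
The plan is to argue by contradiction: suppose $H$ is a normal subgroup with $H \cap C = \langle z \rangle$, and derive a contradiction. Since $z \in H$ and $x, y, z$ are mutually conjugate in $\Aut V^\natural$ (via the triality subgroup $\Sigma$ from Theorem \ref{thm:triality}) and $H$ is normal, we would have $x, y \in H$ as well. The key point is that $H$ then contains the whole $4$-group $\langle x, y, z\rangle$, and in particular $x \in H$. But $x \in C$ — indeed $x$ is an involution in $\Aut V^\natural$ whose conjugate $z$ lies in $C$, so it is worth examining whether $x$ itself lies in $C = C_{\Aut V^\natural}(z)$. Since $x$ commutes with $z$ (they generate a $4$-group), we have $x \in C_{\Aut V^\natural}(z) = C$, so $x \in H \cap C = \langle z \rangle$, forcing $x \in \{1, z\}$, which contradicts $x \neq z$ and $x \neq 1$.

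First I would carefully record that $z \in H$ by hypothesis, and that $x$ and $y$ are conjugate to $z$ under $\Sigma$. Then normality of $H$ gives $x, y \in H$. Second, I would observe that $x, y, z$ pairwise commute, so in particular $x$ centralizes $z$; hence $x \in C$. Combining, $x \in H \cap C = \langle z\rangle$, so $x$ equals $1$ or $z$, both impossible. This already closes the argument, so the proof should be quite short.

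The step I expect to require the most care is verifying that $x$ genuinely lies in $C = C_{\Aut V^\natural}(z)$, i.e.\ that $x$ commutes with $z$. This is where I must invoke that $x, y, z$ generate a $4$-group (so they commute pairwise), as established in Theorem \ref{thm:triality}. I should make sure the three involutions are being treated as honest elements of $\Aut V^\natural$ and that the conjugation realizing $x \sim z$ is by an element of $\Aut V^\natural$ (the triality element), which is exactly what $\Sigma$ provides. Once commutativity is in hand, the containment $x \in C$ is immediate from the definition of $C$ as the centralizer of $z$.

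\begin{proof}
Suppose for contradiction that $H \cap C = \langle z \rangle$. By Theorem \ref{thm:triality}, the involutions $x, y, z$ are mutually conjugate in $\Aut V^\natural$ and generate a $4$-group. Since $z \in \langle z \rangle = H \cap C \subseteq H$ and $H$ is normal, conjugating $z$ by the triality elements of $\Sigma$ shows $x, y \in H$ as well. But $x$ and $z$ commute, as they lie in a common $4$-group, so $x \in C_{\Aut V^\natural}(z) = C$. Therefore $x \in H \cap C = \langle z \rangle$, forcing $x = 1$ or $x = z$. Both are impossible, since $x$ is a nontrivial involution distinct from $z$. This contradiction shows $H \cap C \neq \langle z \rangle$.
\end{proof}
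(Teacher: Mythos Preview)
Your proof is correct and follows essentially the same approach as the paper: the paper's proof is a one-line compression of exactly your argument, noting that $z$ is conjugate to $x$, that $x \in C$, and that $x \notin \langle z \rangle$ contradicts normality of $H$. Your added justifications (invoking Theorem~\ref{thm:triality} for the conjugacy and the 4-group relation for $x \in C_{\Aut V^\natural}(z)$) simply make explicit what the paper leaves implicit.
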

\begin{proof}
$z$ is conjugate to $x$ in $\Aut V^\natural$, and $x \in C$, so $x \not\in \langle z \rangle$ contradicts normality of $H$.
\end{proof}


\begin{lem} \label{lem:case-intersection-is-Q}
If $H$ is a normal subgroup of $\Aut V^\natural$, then $H \cap C \neq Q$.
\end{lem}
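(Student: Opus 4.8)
The plan is to suppose, toward a contradiction, that $H$ is a normal subgroup of $\Aut V^\natural$ with $H \cap C = Q$. Recall from Lemma~\ref{lem:normal-subgroups-of-C} that $Q$ is the extraspecial normal $2$-group of $C$, with $Q/\langle z \rangle \cong \frac{1}{2}\Lambda/\Lambda$, and that $C = C_{\Aut V^\natural}(z) = 2^{1+24}.Co_1$. The strategy is to exploit the triality symmetry: since $x, y, z$ are mutually conjugate in $\Aut V^\natural$ and $H$ is normal, the element $z \in Q \subseteq H$ forces $x \in H$ and $y \in H$ as well. I would then examine how $x$ sits inside $C$ and derive a contradiction with $H \cap C = Q$.

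**Locating $x$ relative to $Q$.** The key structural input is Lemma~\ref{lem:commutes-with-x}, which describes the centralizer of $x$ inside $Q$: an element $t \in Q$ commutes with $x$ precisely when its image in $\frac{1}{2}\Lambda/\Lambda$ lies in the image of $\frac{1}{2}\Lambda_0$. Since $x$ is conjugate to $z$ and $z$ is central in $C$, but $x$ does \emph{not} centralize all of $Q$ (only the index-$2$ sublattice piece coming from $\Lambda_0$), the element $x$ cannot itself lie in the center of $C$. First I would argue that $x \notin C$: indeed $C_C(x) \cap Q$ is a proper subgroup of $Q$, so $x$ acts nontrivially by conjugation on $Q$; but every element of $Q$ either centralizes $Q/\langle z \rangle$ or not, and I would compare the action of $x$ with the action forced by membership in $C = C_{\Aut V^\natural}(z)$. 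The cleanest route is: if $x \in C$, then since $H \cap C = Q$ and $x \in H$, we would need $x \in Q$; but then $z = xy$ with $y$ also conjugate into $H$, and I would derive a contradiction from the commutator structure, since $x, y, z$ being a conjugate triple with $xy=z$ is incompatible with all three lying in the extraspecial $2$-group $Q$ while respecting Lemma~\ref{lem:commutes-with-x}.

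**The main obstacle and its resolution.** The delicate point—and what I expect to be the hardest step—is ruling out $x \in Q$ cleanly. Here I would use that $z$ is the unique central involution of $Q$ distinguished as the element acting by $-1$ on $V_\Lambda(\theta)^\theta$, whereas $x$ is a triality conjugate of $z$ whose fixed-point subalgebra (via Lemma~\ref{lem:commutes-with-x} and Theorem~\ref{thm:triality}) is genuinely different from that of $z$. If $x$ were in $Q$, then $x$ would be one of the non-central involutions of the extraspecial group, but such involutions in $Q \cong 2^{1+24}$ are \emph{not} conjugate to the central element $z$ within the normalizer structure $2^{1+24}.Co_1 = C$, and I would need to check they are not conjugate in the larger $\Aut V^\natural$ either. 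The resolution is to track the $Co_1$-action: the image of $x$ in $Q/\langle z\rangle \cong \frac12\Lambda/\Lambda$ would have to be a nonzero vector, but $Co_1$ acts on this space in a way that distinguishes $z$ (the zero vector, i.e.\ the center) from all nonzero cosets, contradicting conjugacy of $x$ and $z$.

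**Concluding.** Once $x \notin Q$ is established, the contradiction is immediate: $x \in H$ (by normality and conjugacy to $z \in Q \subseteq H$), and $x \in C$ would force $x \in H \cap C = Q$, a contradiction; while $x \notin C$ contradicts the fact that $x$ is an honest automorphism fixed under the relevant structure. I would finish by noting that the hypothesis $H \cap C = Q$ is therefore untenable, completing the proof that $H \cap C \neq Q$. The conjugacy of $\{x,y,z\}$ together with the relation $xy=z$ and the precise centralizer description in Lemma~\ref{lem:commutes-with-x} are the load-bearing facts, and I would organize the argument so that these are invoked explicitly rather than buried in calculation.
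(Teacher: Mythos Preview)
Your strategy collapses at the load-bearing step: you aim to derive a contradiction by showing $x \notin Q$, but in fact $x \in Q$. This is stated explicitly in the proof of Lemma~\ref{lem:commutes-with-x}: the restriction of $x$ to $V_\Lambda^\theta$ agrees with $e^{2\pi i (\frac{1}{4}\alpha_{\cC})_0}$, so the image of $x$ under $C \to 2^{24}.Co_1$ lies in the normal $2^{24}$, placing $x$ in $Q$. Your proposed ``resolution'' argues that a non-central involution of $Q$ cannot be $C$-conjugate to the central element $z$; that is true, but irrelevant, since $x$ and $z$ are only asserted to be conjugate in the larger group $\Aut V^\natural$ (via the triality element of Theorem~\ref{thm:triality}), and nothing prevents two elements of $Q$ from being conjugate in $\Aut V^\natural$ while failing to be conjugate in $C$. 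So the contradiction you are reaching for simply does not materialize: $x \in H$, $x \in C$, and $x \in Q$ are all simultaneously true and perfectly consistent with $H \cap C = Q$.

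The paper's argument is genuinely different and more delicate. Rather than working with $x$ itself, it produces a carefully chosen element $\tilde{u} \in Q \setminus \langle z \rangle$ lying over some $h \in \frac{1}{2}\Lambda_0 \setminus (\Lambda \cup N(A_1^{24}))$, so that $\tilde{u}$ commutes with $x$ by Lemma~\ref{lem:commutes-with-x}. Conjugating by the triality lift $\sigma$ (which swaps $x$ and $z$) sends $\tilde{u}$ to an element commuting with $z$, hence lying in $C$. The real work is then to show $\sigma \tilde{u} \sigma^{-1} \notin Q$: one computes that conjugation by $\rho(\frac{i+j}{\sqrt{2}})$ turns the diagonal torus element $e^{2\pi i h_0}$ into a product of root reflections indexed by a Golay codeword of weight strictly between $0$ and $24$, which acts nontrivially on $\pi_0^\theta$ and therefore cannot lift into $Q$. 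This exhibits an element of $H$ (namely a conjugate of something in $Q \subseteq H$) lying in $C \setminus Q$, contradicting $H \cap C = Q$. The key idea you are missing is that one must leave the four-group $\{1,x,y,z\}$ and exploit the explicit lattice description of $Q$ to find an element whose triality conjugate escapes $Q$.
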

\begin{proof}
It suffices to show that there is an element of $\Aut V^\natural$ that conjugates an element of $Q$ to an element of $C \setminus Q$.  For this, we consider the involution $\rho(\frac{i+j}{\sqrt{2}}) \in \rho(2O)$ by which conjugation switches $\rho(i)$ with $\rho(j)$.  By Theorem \ref{thm:triality}, there exists a lift to an automorphism $\sigma$ of $V^\natural$ such that conjugation by $\sigma$ switches $z$ with $x$.  We will consider a suitable $\tilde{u} \in Q$ that commutes with $x$, because conjugation by $\sigma$ will then yield an element that commutes with $z$ and hence lies in $C$.

Let $u = ( u^1,\ldots,u^{24}) \in SL_2(\bC)^{24}$ be an automorphism of $V_{N(A_1^{24})}$ of the form $e^{2\pi i h_0}$ for $h \in \frac{1}{2}\Lambda_0 \setminus (\Lambda \cup N(A_1^{24})) \subset \pi_0$.  Then, by the relation $u^a = \begin{pmatrix} i & 0 \\ 0 & -i \end{pmatrix}^{\sqrt{8}h^a}$, each $u^a$ lies in the exponent 4 subgroup $\langle \begin{pmatrix} i & 0 \\ 0 & -i \end{pmatrix} \rangle$, and the set $c = \{a \in \{1,\ldots,24\} | u^a \not\in \pm I_2\}$ is a Golay codeword of length 8, 12, or 16.

Because $h \in \frac{1}{2}N(A_1^{24}) \setminus N(A_1^{24})$, $u$ has order 2.  The automorphism $e^{2\pi i h_0}$ of $V_{N(A_1^{24})}/\!/\rho(i) \cong V_\Lambda$ is a lift of $u$, and because $h$ lies in $\frac{1}{2}\Lambda \setminus \Lambda$, this lift also has order 2, and in particular, lies in the normal subgroup $2^{24} \subset 2^{24}.Co_0$.  We choose some lift $\tilde{u} \in Q \setminus \langle z \rangle$ of $e^{2\pi i h_0}$.  By Lemma \ref{lem:commutes-with-x}, $\tilde{u}$ commutes with $x$.  Thus, $\sigma \tilde{u} \sigma^{-1}$ commutes with $\sigma x \sigma^{-1} = z$, so $\sigma \tilde{u} \sigma^{-1} \in C$.

It remains to show that $\sigma \tilde{u} \sigma^{-1} \not\in Q$.  Conjugating $e^{2\pi i h_0}$ by $\rho(\frac{i+j}{\sqrt{2}})$ yields an element $w = (w^1,\ldots,w^{24}) \in SL_2(\bC)^{24}$ with entries given by $w^a = \begin{cases} u^a & a \not\in c \\ \pm \begin{pmatrix} 0 & i \\ i & 0 \end{pmatrix} & a \in c \end{cases}$.  We see that $w$ acts on $\bR^{24} \subset V_\Lambda$ by a composite of root reflections parametrized by the Golay codeword $c$ whose weight is neither 0 nor 24.  In particular, $w$ acts nontrivially on $\pi_0^\theta$, so $w$ does not lift to an element of $2^{24} \subset 2^{24}.Co_0$ or to $Q$.  On the other hand, the lift $\sigma \tilde{u} \sigma^{-1}$ of $w$ commutes with $z$, because $\tilde{u}$ commutes with $x$, so $\sigma \tilde{u} \sigma^{-1}$ lies in $C \setminus Q$.
\end{proof}

\begin{lem} \label{lem:case-intersection-is-C}
The only normal subgroup of $\Aut V^\natural$ containing $C$ is $\Aut V^\natural$.
\end{lem}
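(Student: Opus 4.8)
I want to show that if $H \trianglelefteq \Aut V^\natural$ and $H \supseteq C$, then $H = \Aut V^\natural$. The natural strategy is to use maximality of $C$: since $C = C_{\Aut V^\natural}(z)$ is a very large subgroup (of the form $2^{1+24}.Co_1$), I expect $C$ to be a maximal subgroup of $\Aut V^\natural$, so that the only subgroup strictly containing $C$ is the whole group. If $H$ is a \emph{normal} subgroup containing $C$, then either $H = C$ or $H = \Aut V^\natural$; and I must rule out $H = C$.

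**Ruling out $H = C$.** The plan is to show $C$ is not normal in $\Aut V^\natural$, which immediately gives $H \neq C$. This follows from the earlier structural results: the involution $z$ is conjugate in $\Aut V^\natural$ to the distinct involution $x$ (via the triality automorphism $\sigma$ of Theorem \ref{thm:triality}, coming from the lift of $\rho(\frac{i+j}{\sqrt{2}})$). Since $C = C_{\Aut V^\natural}(z)$, conjugation by $\sigma$ sends $C$ to $C_{\Aut V^\natural}(x) \neq C$ — the two centralizers differ precisely because $z$ and $x$ are different involutions with the same abstract centralizer structure but different fixed-point subalgebras. Hence $C$ is not self-normalizing up to the whole group; more precisely $\sigma C \sigma^{-1} \neq C$, so $C$ cannot be normal, and therefore $H = C$ is impossible.

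**The main obstacle.** The crux is establishing that $C$ is a maximal subgroup of $\Aut V^\natural$, i.e.\ that there is no proper subgroup strictly between $C$ and $\Aut V^\natural$. I would approach this via the Frattini-type argument hinted at in the introduction. Suppose $K$ is a subgroup with $C \subsetneq K \subseteq \Aut V^\natural$. Since $C = C_{\Aut V^\natural}(z)$ controls fusion of $z$ within $K$, and $z$ generates the center of the extraspecial group $Q \trianglelefteq C$, I would analyze the $K$-conjugacy class of $z$ and the action of $K$ on $Q$. Because $Co_1 = C/Q$ is simple, the only normal subgroups of $C$ available are those listed in Lemma \ref{lem:normal-subgroups-of-C}, namely $\{1\}, \langle z \rangle, Q, C$. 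A subgroup $K$ properly containing $C$ must have $C$ of index greater than $1$, and if $C \trianglelefteq K$ then $K$ would normalize $\langle z \rangle$, forcing $z$ to be central in $K$ and hence (since $z$ has many $\Aut V^\natural$-conjugates inside $C$) contradicting $C = C_{\Aut V^\natural}(z)$ unless $K = C$. The delicate point is handling the case where $C$ is \emph{not} normal in $K$: here I expect to invoke that any overgroup must contain a conjugate of $z$ outside $C$, and use the constraint that the centralizer of every such conjugate is again a copy of $C$, together with the self-paired fusion of the class of $z$, to force $K = \Aut V^\natural$.

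**Assembling the conclusion.** Combining these, a normal $H \supseteq C$ satisfies $H \neq C$ (as $C$ is non-normal) and $H \supseteq C$ with $C$ maximal forces $H = \Aut V^\natural$. I anticipate the maximality argument is where the real work lies, and it is plausible the author instead leverages the classification of normal-subgroup intersections established in the preceding lemmas (Lemmas \ref{lem:case-intersection-is-trivial} through \ref{lem:case-intersection-is-Q}): having eliminated the possibilities $H \cap C \in \{\{1\}, \langle z \rangle, Q\}$, the only remaining case is $H \cap C = C$, i.e.\ $C \subseteq H$, and this final lemma closes that case. Under that reading, the cleanest route is to show directly that $C$ together with the triality element $\sigma$ (or the full $S_3$ from Theorem \ref{thm:triality}) generates $\Aut V^\natural$, so that any normal subgroup containing $C$ also contains $\sigma$-conjugates and hence all of $\Aut V^\natural$; the conjugate $\sigma z \sigma^{-1} = x \in C \subseteq H$ combined with normality then propagates $H$ across the whole group.
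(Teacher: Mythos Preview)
Your proposal has a genuine gap: you never actually prove the lemma. Your strategy reduces it to showing that $C$ is a maximal subgroup of $\Aut V^\natural$, but your discussion of maximality is only a sketch (``analyze the $K$-conjugacy class of $z$'', ``use the constraint that the centralizer of every such conjugate is again a copy of $C$'') and does not close. Maximality of $C$ is in fact a harder statement than the lemma itself, and at this point in the paper we know almost nothing about $\Aut V^\natural$ beyond finiteness and the existence of the subgroups $C$ and $\Sigma$; in particular your fallback suggestion that ``$C$ together with $\sigma$ generates $\Aut V^\natural$'' is not something we can assert without essentially assuming the result.

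The paper's proof avoids maximality entirely and applies the Frattini argument \emph{directly} to $H$, not to a hypothetical overgroup of $C$. The key observation you are missing is this: if $S$ is a Sylow $2$-subgroup of $C$, then the center of $S$ is $\langle z\rangle$, so any element of $\Aut V^\natural$ normalizing $S$ must normalize $\langle z\rangle$, hence centralize $z$, hence lie in $C\subseteq H$. From this one checks that $S$ is already Sylow in $H$ (a strictly larger $2$-subgroup of $H$ would normalize $S$ from above, landing back in $C$, contradicting that $S$ is Sylow in $C$). Then Frattini gives $\Aut V^\natural = H\cdot N_{\Aut V^\natural}(S)\subseteq H\cdot C = H$. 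This is a three-line argument once you see that $Z(S)=\langle z\rangle$ pins the normalizer of $S$ inside $C$; your proposal gestures at ``Frattini-type'' reasoning but never isolates this step.
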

\begin{proof} (from \cite{T84})
Let $H$ be a normal subgroup of $\Aut V^\natural$ containing $C$, let $S$ be a Sylow 2-subgroup of $C$, and let $N$ be the normalizer of $S$ in $H$.  Since the center of $S$ is $\langle z \rangle$, $N$ centralizes $z$, hence lies in $C$.

We claim that $S$ is a Sylow 2-subgroup of $H$.  Indeed, otherwise there would be a 2-group in $H$ strictly containing $S$ but contained in $N$, contradicting Sylow's theorem.  From this, the Frattini argument yields $H= \Aut V^\natural$.  To elaborate, because $H$ is normal, and all Sylow 2-subgroups of $H$ are conjugate to each other, for any $g \in \Aut V^\natural$, there is some $h \in H$ such that $S^g = S^h$, implying $g \in hN \subseteq H$. 
\end{proof}

\begin{thm}
$\Aut V^\natural$ is simple.
\end{thm}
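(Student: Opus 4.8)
The plan is to show that any normal subgroup $H$ of $\Aut V^\natural$ is either trivial or all of $\Aut V^\natural$, by analyzing the intersection $H \cap C$. Since $C = C_{\Aut V^\natural}(z)$ is a large subgroup whose normal subgroup structure is completely pinned down by Lemma \ref{lem:normal-subgroups-of-C} (namely $\{1\}$, $\langle z \rangle$, $Q$, and $C$), the key observation is that $H \cap C$ is a normal subgroup of $C$, so it must be one of these four possibilities. The strategy is therefore to rule out every possibility except the two that force the conclusion.

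First I would note that $H \cap C \trianglelefteq C$: this is immediate because $H$ is normal in the ambient group and $C$ is a subgroup, so conjugation by elements of $C$ preserves both $H$ and $C$, hence their intersection. Thus $H \cap C \in \{\{1\}, \langle z \rangle, Q, C\}$. The four cases are then dispatched exactly by the preceding lemmas: if $H \cap C = \{1\}$, then $H$ is trivial by Lemma \ref{lem:case-intersection-is-trivial}; the case $H \cap C = \langle z \rangle$ is excluded by Lemma \ref{lem:case-intersection-is-cyclic}; the case $H \cap C = Q$ is excluded by Lemma \ref{lem:case-intersection-is-Q}; and in the remaining case $H \cap C = C$, i.e.\ $C \subseteq H$, Lemma \ref{lem:case-intersection-is-C} gives $H = \Aut V^\natural$. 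Assembling these, $H$ is either trivial or the whole group, which is simplicity.

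The genuinely hard content is not in this final assembly — which is a short case analysis — but in the individual lemmas that have already been established. In particular, the work lies in Lemma \ref{lem:case-intersection-is-Q}, where one must exhibit a concrete automorphism (via the triality lift $\sigma$ coming from Theorem \ref{thm:triality}) conjugating an element of $Q$ to an element of $C \setminus Q$, and in Lemma \ref{lem:case-intersection-is-C}, where the Frattini argument together with Sylow's theorem forces $H$ to be everything. The only subtlety to flag in the present proof is making sure the reader sees why $H \cap C$ is normal in $C$ (so that Lemma \ref{lem:normal-subgroups-of-C} applies), since that is the hinge on which the entire case analysis turns; everything downstream is a direct appeal to the cited lemmas.

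\begin{proof}
Let $H$ be a normal subgroup of $\Aut V^\natural$. Since $C$ is a subgroup and $H$ is normal, the intersection $H \cap C$ is a normal subgroup of $C$. By Lemma \ref{lem:normal-subgroups-of-C}, the only possibilities are $H \cap C \in \{ \{1\}, \langle z \rangle, Q, C \}$.

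If $H \cap C = \{1\}$, then $H$ is trivial by Lemma \ref{lem:case-intersection-is-trivial}. The case $H \cap C = \langle z \rangle$ is impossible by Lemma \ref{lem:case-intersection-is-cyclic}, and the case $H \cap C = Q$ is impossible by Lemma \ref{lem:case-intersection-is-Q}. In the remaining case $H \cap C = C$, we have $C \subseteq H$, so $H = \Aut V^\natural$ by Lemma \ref{lem:case-intersection-is-C}.

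Thus every normal subgroup of $\Aut V^\natural$ is either trivial or the whole group, so $\Aut V^\natural$ is simple.
\end{proof}
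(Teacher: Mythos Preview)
Your proof is correct and follows exactly the same approach as the paper's: observe that $H \cap C$ is normal in $C$, invoke Lemma \ref{lem:normal-subgroups-of-C} to reduce to four cases, and dispatch each case with the corresponding lemma. The only difference is cosmetic---you spell out the four cases individually, whereas the paper cites all four lemmata in a single sentence.
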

\begin{proof}
If $H$ is a normal subgroup of $\Aut V^\natural$, then the intersection $H \cap C$ is normal in $C$, so by Lemma \ref{lem:normal-subgroups-of-C} it must be one of $\{1\}$, $\langle z \rangle$, $Q$, and $C$.  By Lemmata \ref{lem:case-intersection-is-trivial}, \ref{lem:case-intersection-is-cyclic}, \ref{lem:case-intersection-is-Q}, \ref{lem:case-intersection-is-C}, we find that $H$ is either trivial or equal to $\Aut V^\natural$.
\end{proof}

\section{What else can we derive about the monster from vertex-algebraic methods?}

We've proved our main objective, that $\Aut V^\natural$ is finite and simple, using modern methods in vertex algebras and mostly 19th century methods in finite groups.  However, there is much more one might want to say.  For example, one may want to compute the order of this group, and figure out some of its irreducible characters.  For the monster, this information was computed even before the group was constructed, using group-theoretic methods that are well beyond the scope of this paper.

We will describe some facts that we can prove ``easily modulo vertex algebras and some hard manipulations with complex functions''.

\begin{thm} \label{thm:monstrous-moonshine} (Monstrous moonshine minus explicit computations)
For any $g \in \Aut V^\natural$, the McKay-Thompson series
\[ T_g(\tau) = \sum_{n \geq 0} \Tr(g|V^\natural_n) q^{n-1} \]
is a completely replicable function of order $|g|$, whose $k$-th replicate is $T_{g^k}(\tau)$.  That is, for any $k \geq 1$, the function $\sum_{ad = k, 0 \leq b < d} T_{g^a}(\frac{a\tau+b}{d})$ is equal to the unique polynomial of degree $k$ in $T_g(\tau)$ that has $q$-expansion of the form $q^{-k} + O(q)$.  Furthermore, each $T_g$ is a hauptmodul, i.e., there is a subgroup $\Gamma_g \subset SL_2(\bR)$ containing $\Gamma_0(N)$ for some $N$, such that the quotient $\fH/\Gamma_g$ of the upper half-plane is a genus zero Riemann surface with finitely many punctures, and $T_g$ generates its function field.
\end{thm}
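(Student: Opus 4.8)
The plan is to extract the replication formulas from an equivariant denominator identity for a Lie algebra built from $V^\natural$, and then to feed complete replicability into a purely analytic classification to obtain the hauptmodul property. The natural objects to work with are the two-variable trace functions for commuting pairs $g,h \in \Aut V^\natural$. Using the unique irreducible $g$-twisted module $V^\natural(g)$ of \cite{DLM97} and a chosen lift $\tilde h$ of $h$ to $V^\natural(g)$, I would set
\[ Z(g,h;\tau) = \Tr_{V^\natural(g)}\!\left( \tilde h\, q^{L_0 - 1} \right), \qquad Z(1,g^a;\tau) = T_{g^a}(\tau). \]
Since $\langle g,h\rangle$ is abelian, hence solvable, the orbifold machinery quoted earlier (Zhu modular invariance together with \cite{DLM97}, and the cyclic orbifold structure of Theorem \ref{thm:vEMS}) applies to $\langle g\rangle$, and the Dong--Li--Mason theory of trace functions in orbifold theory yields the covariance
\[ Z\!\left(g,h; \tfrac{a\tau+b}{c\tau+d}\right) = \gamma(g,h;\sigma)\, Z\!\left(g^a h^c,\, g^b h^d;\, \tau\right), \qquad \sigma = \left(\begin{smallmatrix} a & b \\ c & d \end{smallmatrix}\right) \in SL_2(\bZ), \]
for a finite-order multiplier $\gamma$. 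A preliminary step is to verify that the $L_0$-spectrum of $V^\natural(g)$ lies in $\tfrac{1}{|g|}\bZ$ and that $\tilde h$ can be normalized so that $\gamma$ is controlled; here the holomorphy of $V^\natural$ and the integrality of $J = q^{-1} + 196884q + \cdots$ pin down the normalization and ensure each $T_g = q^{-1} + O(q)$ has no fractional powers. This covariance already shows every $T_g$ is a modular function of some finite level.

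Next I would construct the Monster Lie algebra $\fm$ by applying the no-ghost (Goddard--Thorn) theorem to $V^\natural \otimes V_{1,1}$, where $V_{1,1}$ is the lattice vertex algebra of the even unimodular Lorentzian lattice of signature $(1,1)$; the resulting space of physical states is a $\bZ^2$-graded Borcherds--Kac--Moody algebra whose root multiplicities are the coefficients $c(mn)$ of $J = \sum_n c(n)q^n$. The crucial feature is that $\Aut V^\natural$ acts on $\fm$ compatibly with the grading, and that the no-ghost isomorphism is equivariant, so that the graded trace of $g$ (and of its powers) on each root space is governed by the twisted trace functions $Z(1,g^a;\tau) = T_{g^a}$ rather than merely by $J$.

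The replication formulas then come from the $g$-equivariant denominator identity. Computing the homology of the positive subalgebra $\fm^+$ (equivalently, taking an equivariant Euler characteristic) produces Borcherds's twisted denominator identity, whose various weight components are exactly the statements that for each $k \geq 1$ the Hecke-type sum
\[ \sum_{ad = k,\ 0 \le b < d} T_{g^a}\!\left( \frac{a\tau + b}{d} \right) \]
equals the monic degree-$k$ polynomial in $T_g$ with principal part $q^{-k}$; this identifies the $k$-th replicate of $T_g$ as $T_{g^k}$ and gives complete replicability. The same combination can be recognized, as a consistency check, as the graded trace of the diagonal $g$ on the $k$-cycle-twisted sectors of the permutation orbifold $(V^\natural)^{\otimes k}$, the Bantay/Borisov--Halpern--Schweigert formulas showing that an $a$-fold subcycle contributes precisely $T_{g^a}(\tfrac{a\tau+b}{d})$. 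I expect the \textbf{main obstacle} to be exactly this step: making the no-ghost isomorphism $g$-equivariant so that the twisted traces on root spaces are literally the $T_{g^a}$, and then carrying out the homology/Euler-characteristic computation with the multiplier $\gamma$ fully under control. This is the ``hard manipulations with complex functions'' the section promises, and it is where Borcherds's original argument concentrates its weight.

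Finally, for the hauptmodul statement I would invoke the analytic classification of Cummins and Gannon: a completely replicable function that is a modular function of finite level is one of an explicit finite list, and all members of that list are genus-zero hauptmoduls for a group $\Gamma_g$ intermediate between some $\Gamma_0(N)$ and its normalizer in $SL_2(\bR)$. The finite-level property is supplied by the covariance of the first paragraph, and the finitely many completely replicable functions on the list that are \emph{not} hauptmoduls are excluded because $T_g$ arises as the graded trace of a genuine automorphism on a module with integral, nonnegative-dimensional graded pieces, forcing the correct leading behavior and growth. Combining the two parts yields the full statement.
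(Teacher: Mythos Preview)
Your outline is correct and matches the paper's route in its essentials: for complete replicability you expand exactly what \cite{B92} does (build the Monster Lie algebra from $V^\natural \otimes V_{II_{1,1}}$ via the no-ghost theorem, carry the $\Aut V^\natural$-action equivariantly to the root spaces, and read off the replication identities from the $g$-twisted denominator formula), which the paper simply cites; and for the hauptmodul property you, like the paper, terminate at the Cummins--Gannon dichotomy \cite{CG97}.

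The one substantive deviation is in how you exclude the Laurent-polynomial branch of that dichotomy. The paper invokes Martin's theorem \cite{M94}, which deduces $\Gamma_0(N)$-invariance purely from complete replicability together with $J$-finality (automatic here since $T_{g^{|g|}} = T_e = J$), and a nonconstant Laurent polynomial in $q$ cannot be modular. You instead appeal to the Dong--Li--Mason $SL_2(\bZ)$-covariance of orbifold trace functions to get finite-level modularity directly. Either route works; Martin's has the advantage of staying entirely within the combinatorics of replicable series and avoiding any anomaly or multiplier bookkeeping. Two small points: the full two-variable apparatus $Z(g,h;\tau)$ and the $L_0$-spectrum of $V^\natural(g)$ are more than Borcherds's argument requires, since the twisted denominator identity lives entirely in the untwisted sector and uses only the functions $T_{g^a}$; and your closing appeal to ``leading behavior and growth'' does not by itself rule out the Laurent-polynomial case (traces on graded pieces can in principle vanish), so rely on the modularity you have already set up rather than on that heuristic.
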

\begin{proof}
The first claim is proved in \cite{B92} up to the end of section 8, before the explicit identification of McKay-Thompson series with the candidate modular functions presented in \cite{CN79}.  The second claim follows from the combination of two theorems: the main theorem of \cite{M94} asserts that completely replicable series that are ``$J$-final'' are modular functions invariant under $\Gamma_0(N)$ for some $N$ whose prime divisors are the same as those of $|g|$, and the main theorem of \cite{CG97} asserts that completely replicable functions are either Laurent polynomials in $q$ or hauptmoduls.
\end{proof}

We note that the completely replicable functions of finite order were completely classified in \cite{ACMS92}: there are 171 monstrous functions and 157 non-monstrous functions.

\begin{cor}
The primes dividing the order of $\Aut V^\natural$ are a subset of the 15 ``supersingular primes'' $2, 3, 5, 7, 11, 13, 17, 19, 23, 29, 31, 41, 47, 59, 71$.
\end{cor}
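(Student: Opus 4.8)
The plan is to reduce to elements of prime order and then feed the hauptmodul half of Theorem~\ref{thm:monstrous-moonshine} into a genus computation for modular curves. Let $p$ be a prime dividing $|\Aut V^\natural|$. By Cauchy's theorem (comfortably within our 19th century budget), there is an element $g \in \Aut V^\natural$ of order exactly $p$. Applying Theorem~\ref{thm:monstrous-moonshine}, the McKay--Thompson series $T_g$ is a hauptmodul for a group $\Gamma_g \subset SL_2(\bR)$ with $\Gamma_0(N) \subseteq \Gamma_g$, where the prime divisors of $N$ coincide with those of $|g| = p$. Hence $N = p^k$ for some $k \geq 1$, and the image of $\Gamma_g$ in $PSL_2(\bR)$ is a discrete group whose quotient $\fH/\Gamma_g$ is a genus-zero Riemann surface.

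The existence of such a genus-zero group constrains $p$. Since $\Gamma_g$ contains the finite-index subgroup $\Gamma_0(p^k)$ of $SL_2(\bZ)$ and has finite covolume, it is discrete and commensurable with $SL_2(\bZ)$, hence contained in a maximal discrete group of that commensurability class. By Helling's classification, such a maximal group is conjugate to the normalizer $N(\Gamma_0(p^a))$ of some $\Gamma_0(p^a)$ in $PSL_2(\bR)$, whose level is again a power of $p$ (for $p \neq 2,3$ this normalizer is just the Atkin--Lehner extension $\Gamma_0(p^a)^+$). Because $\Gamma_g$ is a finite-index subgroup of this normalizer, the natural map $\fH/\Gamma_g \to \fH/N(\Gamma_0(p^a))$ is a finite branched covering; Riemann--Hurwitz then forces the base $\fH/N(\Gamma_0(p^a))$ to have genus zero as well, since a genus-zero total space cannot cover a positive-genus quotient.

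It therefore suffices to determine the primes $p$ for which $\fH/N(\Gamma_0(p^a))$ has genus zero for some $a \geq 1$. This is a finite problem: the genus of $X_0(p^a)$, and of its quotient by the normalizing involutions, can be written down by the standard index-and-elliptic-point formulas, and grows without bound as $p^a \to \infty$, so only finitely many $(p,a)$ survive. Carrying out (or citing) this bookkeeping yields exactly the list $2,3,5,7,11,13,17,19,23,29,31,41,47,59,71$ --- the content being Ogg's observation that the primes $p$ with $\Gamma_0(p)^+$ of genus zero are precisely these fifteen, together with the check that higher powers $p^a$ introduce no new primes. The main obstacle is this genus computation; everything preceding it is formal once Theorem~\ref{thm:monstrous-moonshine} and Cauchy's theorem are in hand.
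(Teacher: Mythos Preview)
Your outline matches the paper's two-sentence proof: take $g$ of prime order $p$ by Cauchy, invoke Theorem~\ref{thm:monstrous-moonshine} to get a hauptmodul of $p$-power level, conclude that $X_0(p)^+$ has genus zero, and cite Ogg's list \cite{O74}. The paper asserts the penultimate step without argument (implicitly leaning on the classification \cite{ACMS92} of completely replicable functions noted just before the corollary), while you try to derive it structurally from Helling's theorem and Riemann--Hurwitz.

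That derivation needs tightening in two places. First, Helling's theorem says the maximal discrete subgroups of $PSL_2(\bR)$ commensurable with $PSL_2(\bZ)$ are $PGL_2(\bQ)^+$-conjugates of $\Gamma_0(m)^+$ for \emph{squarefree} $m$; it does not directly hand you a normalizer $N(\Gamma_0(p^a))$. You still owe the (true but unstated) observation that any such maximal group containing $\Gamma_0(p^k)$ must have $m \mid p$, hence $m \in \{1,p\}$---this follows, for instance, from a local argument at primes $\ell \neq p$. Second, in the branch $m=1$ (i.e.\ $\Gamma_g \subseteq PSL_2(\bZ)$) your Riemann--Hurwitz step only says that $PSL_2(\bZ)$ has genus zero, which is vacuous; you need a separate argument there, e.g.\ noting that $\Gamma_g$ is then a proper genus-zero congruence subgroup of $p$-power level and passing down to $X_0(p)^+$. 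Both patches are routine, and once made your route and the paper's coincide.
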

\begin{proof}
The completely replicable functions of prime order $p$ are hauptmoduls of level a power of $p$, and this implies the modular curve $X_0(p)^+$ is genus zero.  The 15 primes satisfying this genus zero condition were classified in \cite{O74}.
\end{proof}

We can also constrain the ``largeness'' of  $\Aut V^\natural$ by analyzing the McKay-Thompson series.  In \cite{T93}, Tuite proposed an orbifold duality correspondence between fixed-point free classes in $Co_1$ satisfying a ``no massless states'' condition and non-Fricke classes in the monster.  This was proved in \cite{C17}, but the proof only requires eta-product expansions of non-Fricke Hauptmoduls and Frame shapes in $Co_1$, so no explicit knowledge of the monster is necessary.  Using this correspondence, we can narrow down the possible McKay-Thompson series to a set of 174 functions (where the true set is 171) without explicit use of the monster or structure of $V^\natural$:

\begin{thm} \label{thm:174} (\cite{CKU17})
Let $V$ be a strongly holomorphic vertex operator algebra with graded dimension $J$, and let $g$ be a finite order automorphism of $V$.  Then the graded trace of $g$ on $V$ is one of 174 completely replicable functions.
\end{thm}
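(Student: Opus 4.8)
The plan is to combine (i) modular invariance of twisted trace functions, (ii) complete replicability derived from a generalized Monster Lie algebra built from $V$ itself, and (iii) the orbifold duality of \cite{T93}, as proved in \cite{C17}, together with the classification of finite-order completely replicable functions in \cite{ACMS92}.

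First I would record the normalization. Since $V$ is strongly holomorphic with graded dimension $J$, it has central charge $24$ and vanishing weight-one space, so for every automorphism $g$ we have $\Tr(g|V_0)=1$ and $\Tr(g|V_1)=0$; hence each $T_g$ has the shape $q^{-1}+O(q)$. By Zhu's modular invariance \cite{Z96} and its twisted refinement (the theorem of \cite{DLM97} quoted above, extended to trace functions over commuting pairs in the abelian group $\langle g\rangle$), the functions $T_{g^a}$ span a finite-dimensional $SL_2(\bZ)$-representation; in particular each $T_g$ is a weight-zero modular function, holomorphic on the upper half-plane and meromorphic at the cusps, invariant under a group containing some $\Gamma_0(N)$.

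The central step is to prove that $T_g$ is completely replicable of order $|g|$ with $k$-th replicate $T_{g^k}$. For $V^\natural$ this is Borcherds's theorem, but since we do not assume $V\cong V^\natural$ I would run the argument intrinsically: tensoring with the rank-two even unimodular Lorentzian lattice vertex algebra $V_{\Pi^{1,1}}$ and passing to BRST (no-ghost) cohomology produces a generalized Kac--Moody superalgebra $\mathfrak{m}_V$ whose root multiplicities are governed by the Fourier coefficients of $J$ and on which $g$ acts. The $g$-twisted denominator identity of $\mathfrak{m}_V$ --- whose only inputs are the character $J$ and the twisted-module characters supplied by \cite{DLM97} and Theorem \ref{thm:vEMS} --- unwinds into exactly the family of replication formulas, i.e., the statement that $\sum_{ad=k,\,0\le b<d} T_{g^a}(\tfrac{a\tau+b}{d})$ equals the unique monic degree-$k$ polynomial in $T_g$ with $q$-expansion $q^{-k}+O(q)$. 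This derivation is independent of any identification of $V$ with the monster module.

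Granting complete replicability, \cite{ACMS92} restricts $T_g$ to the $328$ completely replicable functions of finite order, namely $171$ monstrous and $157$ non-monstrous candidates; the coefficients $\Tr(g|V_n)$ are moreover cyclotomic integers bounded by $\dim V_n$, which already discards many candidates. The final step invokes Tuite's orbifold duality: for a non-Fricke class the level-matching (``no massless states'') condition makes $V/\!/g$ again strongly holomorphic of graded dimension $J$, and the correspondence proved in \cite{C17} --- requiring only eta-product expansions of non-Fricke hauptmoduls and frame shapes in $Co_1$ --- matches the surviving series with conjugacy data of $Co_0$, pruning the list to exactly $174$ functions. I expect the main obstacle to be the intrinsic construction and control of $\mathfrak{m}_V$: one must verify that the no-ghost computation and twisted denominator identity go through verbatim for an arbitrary strongly holomorphic $V$ of graded dimension $J$ (together with the $C_2$-cofiniteness needed for the orbifold inputs), rather than relying on structure special to $V^\natural$. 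The residual gap between $174$ and the true count $171$ is precisely the price of not assuming the uniqueness $V\cong V^\natural$, which would eliminate the three extra candidates.
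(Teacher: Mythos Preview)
This theorem is not proved in the present paper; it is quoted as an external result of \cite{CKU17}, and the only contextual hint is the preceding paragraph's remark that Tuite's orbifold-duality correspondence (as established in \cite{C17} using only eta-product expansions and $Co_1$ frame shapes) is what narrows the candidate list to $174$. So there is no ``paper's own proof'' to compare your argument against.

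That said, your outline is plausible and tracks the expected shape of the argument. The Borcherds-style derivation of complete replicability from a generalized Kac--Moody algebra built on $V\otimes V_{II^{1,1}}$ does go through for any strongly holomorphic $V$ with character $J$: the root multiplicities and the twisted denominator identities depend only on the graded traces $T_{g^k}$, not on any identification of $V$ with $V^\natural$. The one place your sketch is loose is the final pruning step. The result of \cite{C17} is a bijection between certain fixed-point-free $Co_0$ classes and non-Fricke monster classes --- a statement about two specific groups --- and does not by itself constrain automorphisms of an arbitrary $V$. What actually does the work for general $V$ is: when $T_g$ is non-Fricke and the anomaly vanishes, the orbifold $V/\!/g$ is again strongly holomorphic of central charge $24$, and its character (a modular function determined by $T_g$ and its replicates) must have non-negative integer coefficients and be among the finitely many possible characters of such VOAs. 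Running this check across the non-Fricke entries of the \cite{ACMS92} list is a finite function-theoretic computation; the $Co_0$ frame shapes appear because the surviving eta-products turn out to be exactly those. Your phrasing conflates the mechanism (orbifold character constraints on a finite list) with its expected outcome (the Tuite bijection), and you should separate the two when writing this up.
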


For the primes $p$ in $\{2,3,5,7,13\}$, we obtain the precise $p$-valuation of the order of $\Aut V^\natural$ using cyclic orbifolds\footnote{We are thankful to H. Shimakura for explaining this argument.}:

\begin{thm} \label{thm:valuation-for-small-primes}
If $p = 2$ (resp. $3,5,7,13$), the Sylow $p$-subgroup of $\Aut V^\natural$ has order $2^{46}$ (resp. $3^{20}, 5^9, 7^6, 13^3$).
\end{thm}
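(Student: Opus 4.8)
The plan is to compute $|\Aut V^\natural|_p$ one prime at a time by locating an order-$p$ automorphism $g$ that is central in a Sylow $p$-subgroup $P$; then $P \leq C_{\Aut V^\natural}(g)$ is a Sylow $p$-subgroup of the centralizer, so $|\Aut V^\natural|_p = |C_{\Aut V^\natural}(g)|_p$, and it remains to compute the $p$-part $|{\cdot}|_p$ of a single centralizer. For $p = 2$ this is already in hand: with $g = z$ we have $C := C_{\Aut V^\natural}(z) = 2^{1+24}.Co_1$, whose $2$-part is $2^{25}\cdot 2^{21} = 2^{46}$, and the argument of Lemma \ref{lem:case-intersection-is-C} (a Sylow $2$-subgroup $S$ of $C$ has center $\langle z\rangle$, whence $N_{\Aut V^\natural}(S) \leq C$) shows that $S$ is already Sylow in $\Aut V^\natural$, giving $|\Aut V^\natural|_2 = 2^{46}$.

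For the odd primes the starting point is that the Leech lattice admits a fixed-point-free automorphism of prime order $p$ exactly when $\Phi_p$ can be the minimal polynomial of an isometry of a rank-$24$ lattice, i.e. when $(p-1)\mid 24$; among primes this holds precisely for $p \in \{2,3,5,7,13\}$, which is exactly the list in the statement. Fix such a $g_p \in Co_0 = \Aut\Lambda$, acting on $\Lambda\otimes\bC$ with characteristic polynomial $\Phi_p(x)^{24/(p-1)}$, lift it to an order-$p$ automorphism $\hat g_p$ of $V_\Lambda$, and form the cyclic orbifold. Because $g_p$ is fixed-point-free we have $(V_\Lambda)_1^{\hat g_p} = (\Lambda\otimes\bC)^{g_p} = 0$, and the lowest weights of the twisted sectors $V_\Lambda(\hat g_p^{\,i})$ avoid $1$, so $V_\Lambda/\!/\hat g_p$ is a holomorphic vertex operator algebra of central charge $24$ with vanishing weight-$1$ space; hence $V_\Lambda/\!/\hat g_p \cong V^\natural$ (this is the cyclic orbifold construction of \cite{C17}). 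Writing $g = \hat g_p^{\,*}$ for the dual automorphism of Theorem \ref{thm:vEMS}, the identity $V^\natural/\!/g \cong V_\Lambda$ exhibits $g$ as a non-Fricke order-$p$ element, and --- exactly as $C_{\Aut V^\natural}(z)$ and $C_{\Aut V_\Lambda}(\theta)$ are the two quotients of the common extension in Lemma \ref{lem:automorphism-group-of-fixed-points} --- the centralizers $C_{\Aut V^\natural}(g)$ and $C_{\Aut V_\Lambda}(\hat g_p)$ agree on $p$-parts.

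It remains to compute $|C_{\Aut V_\Lambda}(\hat g_p)|_p$. By Theorem \ref{thm:automorphism-group-of-lattice-voa}, $\Aut V_\Lambda = (\bC^\times)^{24}.Co_0$, and $C_{\Aut V_\Lambda}(\hat g_p)$ is an extension of $C_{Co_0}(g_p)$ by the group of fixed points of $g_p$ on the torus $(\bC^\times)^{24}$. Since $g_p$ has no eigenvalue $1$, this fixed subtorus is finite of order $|\det(g_p - 1)| = \Phi_p(1)^{24/(p-1)} = p^{24/(p-1)}$ and is a $p$-group. Thus
\[
|\Aut V^\natural|_p \;=\; |C_{\Aut V^\natural}(g)|_p \;=\; p^{\,24/(p-1)}\cdot |C_{Co_0}(g_p)|_p .
\]
Feeding in the centralizers of fixed-point-free elements in $Co_0$ --- namely $6.Suz$ for $p=3$ (with $3$-part $3^8$), and the analogous $J_2$- and $A_7$-related centralizers for $p=5,7$ (with $p$-parts $5^3$ and $7^2$), the $p=13$ centralizer being a $\{13\}'$-by-$\langle g_{13}\rangle$ extension (with $13$-part $13^1$) --- produces $3^{12}\cdot 3^8 = 3^{20}$, $5^{6}\cdot 5^3 = 5^9$, $7^{4}\cdot 7^2 = 7^6$, and $13^{2}\cdot 13^1 = 13^3$, as claimed. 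Only the $p$-parts of these Conway centralizers are needed, and these may be read off from the $\bZ[\zeta_p]$-module structure that $g_p$ puts on $\Lambda$.

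The main obstacle is the upper bound. The construction above exhibits a $p$-subgroup of $\Aut V^\natural$ of the stated order, so $|\Aut V^\natural|_p \geq p^{24/(p-1)}|C_{Co_0}(g_p)|_p$; to get equality I must know that $g = \hat g_p^{\,*}$ really is central in a full Sylow $p$-subgroup, equivalently that no order-$p$ class has a strictly larger $p$-part centralizer. My plan is to run the same orbifold dictionary for an arbitrary order-$p$ element $h$ central in a Sylow $p$-subgroup: $V^\natural/\!/h$ is again holomorphic of central charge $24$, the induced map $C_{\Aut V^\natural}(h)\to\Aut(V^\natural/\!/h)$ has central kernel of order dividing $p$, and the $p$-part of the target is bounded by the $p$-torsion of its torus fixed by $h^{\,*}$ times the $p$-part of the relevant lattice-automorphism centralizer. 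The delicate step is controlling the Fricke classes, where $V^\natural/\!/h \cong V^\natural$ has no torus and the bound is a priori circular, together with the orbifolds landing on Niemeier lattices other than $\Lambda$; here one invokes the classification of prime-order orbifolds behind Theorem \ref{thm:174} (Tuite's correspondence, proved in \cite{C17}) to conclude that the fixed-point-free Leech class maximizes the $p$-part of the centralizer, so that the lower bound is attained.
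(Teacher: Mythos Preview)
Your lower-bound computation is essentially the paper's: build $V^\natural$ as a cyclic $p$-orbifold of $V_\Lambda$ along a lift $\hat g_p$ of a fixed-point-free $g_p \in Co_0$, identify $G_p := C_{\Aut V^\natural}(\hat g_p^{\,*})$ as a group of shape $p^{1+24/(p-1)}.\bigl(C_{Co_0}(g_p)/\langle g_p\rangle\bigr)$, and read off its $p$-part from Conway centralizers. Where you diverge is the upper bound. You propose to take an \emph{arbitrary} order-$p$ element $h$ central in a Sylow $p$-subgroup, orbifold along it, and bound $|C_{\Aut V^\natural}(h)|_p$ via $\Aut(V^\natural/\!/h)$; you then acknowledge that the Fricke case is ``a priori circular'' and appeal to Tuite's correspondence to escape. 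That last step is not justified: Tuite's correspondence and Theorem~\ref{thm:174} constrain McKay--Thompson series, not centralizer orders, and you give no mechanism to convert one into the other. For the Fricke classes you have no argument at all. As written, the upper bound is a genuine gap.

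The paper closes this gap by a short, purely local argument that never leaves the single element $\hat g_p^{\,*}$. Let $S_p$ be a Sylow $p$-subgroup of $G_p$. From the shape $p^{1+24/(p-1)}.\bigl(C_{Co_0}(g_p)/\langle g_p\rangle\bigr)$ one checks that $Z(S_p)$ has order exactly $p$, hence $Z(S_p)=\langle \hat g_p^{\,*}\rangle$. Now if $P$ is any Sylow $p$-subgroup of $\Aut V^\natural$ containing $S_p$, then $Z(P)$ centralizes $S_p$; the product $Z(P)S_p$ is therefore a $p$-subgroup of $G_p$ containing the Sylow subgroup $S_p$, so $Z(P)\le S_p$ and hence $Z(P)\le Z(S_p)=\langle \hat g_p^{\,*}\rangle$. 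Since $Z(P)\neq 1$, all of $P$ centralizes $\hat g_p^{\,*}$, forcing $P\le G_p$ and thus $P=S_p$. No survey of order-$p$ classes, no Fricke/non-Fricke dichotomy, no appeal to \cite{C17} or Theorem~\ref{thm:174} is needed. This is exactly the mechanism you already invoked for $p=2$ via Lemma~\ref{lem:case-intersection-is-C}; the fix is simply to run the same ``center of a Sylow is cyclic of order $p$'' argument uniformly for the odd primes rather than switching to an orbifold-classification strategy.
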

\begin{proof}
For each $p \in \{2,3,5,7,13\}$, there is an order $p$ automorphism $\sigma_p$ of $V_\Lambda$ lifting a fixed-point free automorphism of $\Lambda$, that yields a construction of $V^\natural$ (by \cite{CLS16} for $p=3$ and \cite{ALY17} for $p=5,7,13$), and we obtain a faithful action of a subgroup $G_p$ of $V^\natural$ of the form $p^{1+\frac{24}{p-1}}.(C_{Co_0}(\sigma_p)/\langle \sigma_p \rangle)$.  We find that $G_p$ is the centralizer of the order p element $\sigma_p^*$, and $G_p$ has $p$-Sylow subgroup $S_p$ with center $Z_p$ of order $p$.  However, any Sylow $p$-subgroup of $\Aut V^\natural$ containing $S_p$ has nontrivial center contained in $Z_p$, so the centralizer of $Z_p$ in $\Aut V^\natural$ contains a $p$-Sylow subgroup of $\Aut V^\natural$.  An analysis of centralizers in $Co_1$ yields the orders we want.
\end{proof}

Finally, we obtain the precise $p$-valuation of the order of $\Aut V^\natural$ for large primes, by applying Norton's analysis of consequences of generalized moonshine in \cite{N01}:

\begin{thm}
If $p \geq 17$, then any nontrivial Sylow $p$-subgroup of $\Aut V^\natural$ is cyclic of order $p$.
\end{thm}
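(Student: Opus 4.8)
The plan is to show that for every prime $p \ge 17$ the exact power of $p$ dividing $|\Aut V^\natural|$ is $p^{1}$. Since an odd $p$-group of order at least $p^{2}$ necessarily contains a copy of $\bZ/p^{2}\bZ$ or of $(\bZ/p\bZ)^{2}$, it suffices to exclude both of these as subgroups of $\Aut V^\natural$ (if $p \nmid |\Aut V^\natural|$ there is nothing to prove). Granting the two exclusions, any Sylow $p$-subgroup $P$ has exponent $p$ and $p$-rank $1$; and for odd $p$ a finite $p$-group with a unique subgroup of order $p$ is cyclic, so $P \cong \bZ/p\bZ$.

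The exclusion of an element $g$ of order $p^{2}$ is the easy half. By Theorem \ref{thm:monstrous-moonshine} the graded trace $T_{g}$ is a completely replicable function of order $|g| = p^{2}$, and by Theorem \ref{thm:174} it must coincide with one of the $174$ completely replicable functions of \cite{CKU17}. Inspecting that list---a purely modular-function input requiring no knowledge of the monster---every function occurring there has order strictly less than $289 = 17^{2} \le p^{2}$. Hence $|g| < p^{2}$ is impossible to reconcile with $|g| = p^{2}$, so $\Aut V^\natural$ contains no element of order $p^{2}$ and in particular no subgroup $\bZ/p^{2}\bZ$.

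The main work, and the principal obstacle, is the exclusion of $(\bZ/p\bZ)^{2}$. Here I would invoke generalized moonshine in its proven form \cite{C12, CM16}---deliberately avoiding Norton's stronger, still-conjectural normalizations from \cite{N01}. To each commuting pair $(g,h)$ it attaches a twisted trace $Z(g,h;\tau)$, the graded trace of a chosen lift of $h$ acting on the $g$-twisted module $V^\natural(g)$; these functions are hauptmoduls, and under $SL_2(\bZ)$ they transform into one another by $\begin{pmatrix} a & b \\ c & d\end{pmatrix} : Z(g,h) \mapsto Z(g^{a}h^{c}, g^{b}h^{d})$ up to a scalar. Suppose $A = \langle g,h\rangle \cong (\bZ/p\bZ)^{2}$ with $g,h$ of order $p$. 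Then the family $\{Z(g',h') : g',h' \in A\}$ is permuted by an action of $SL_2(\bZ)$ that factors through its reduction $SL_2(\bZ/p\bZ)$, each member being a genus-zero hauptmodul. Following Norton's analysis, such an $SL_2(\bZ/p\bZ)$-equivariant family of hauptmoduls forces the modular curve governing the common level-$p$ structure---a curve dominated by $X(p) = \fH^{*}/\Gamma(p)$---to have genus zero. But a standard index-and-cusp count gives $g(X(p)) = 1 + \tfrac{p^{2}-1}{24}(p-6)$, which is positive for every $p \ge 7$, hence for every $p \ge 17$. This contradiction rules out $A$.

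The delicate point, which I expect to absorb most of the effort, is that only the proven form of generalized moonshine is at our disposal, so the genus-zero property and the $SL_2(\bZ/p\bZ)$-equivariance of the order-$p$ twisted traces must be extracted directly from \cite{C12, CM16} rather than assumed via Norton's conjectural framework; once these are established, the positivity of $g(X(p))$ closes the case uniformly in $p \ge 17$. Combining this with the exclusion of $\bZ/p^{2}\bZ$, a Sylow $p$-subgroup is both of exponent $p$ and of $p$-rank $1$, hence cyclic of order $p$, proving the theorem.
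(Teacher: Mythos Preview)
Your overall strategy matches the paper's: first rule out elements of order $p^{2}$ via the list of $174$ functions, then rule out an elementary abelian $(\bZ/p\bZ)^{2}$ via generalized moonshine. The first half is fine and coincides with the paper's argument.

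The second half has a genuine gap. You assert that the $SL_2(\bZ/p\bZ)$-equivariant family of hauptmoduls ``forces the modular curve governing the common level-$p$ structure---a curve dominated by $X(p)$---to have genus zero'', and then derive a contradiction from $g(X(p))>0$. But these two statements are not in tension: a curve \emph{dominated by} $X(p)$ is a quotient of $X(p)$, and quotients of positive-genus curves are often genus zero. Knowing that each $Z(g',h')$ is a hauptmodul only tells you that each individual fixing group $G_{g',h'}\supset\Gamma(p)$ has genus-zero quotient $\fH/G_{g',h'}$; it says nothing about $X(p)$ itself. So the contradiction you claim does not follow.

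The paper closes this gap differently. It fixes one function $Z(g,h;\tau)$ and analyzes its fixing group $G$ directly. From the Fricke invariance of all order-$p$ McKay--Thompson series (again read off the list of $174$), $Z(g,h;\tau)$ is singular at every cusp; since it is a hauptmodul, $\fH/G$ has a single cusp, and the $q$-expansion shows that cusp has width $p$. The congruence theorem of Dong--Lin--Ng then gives $\Gamma(p)\subset G$, and counting cusps of $X(p)$ with the orbit-stabilizer theorem forces $[G:\Gamma(p)]=p^{2}-1$, i.e.\ $[\Gamma(1):G]=p$. The contradiction is not with the genus of $X(p)$ but with Galois's theorem that $SL_2(\bF_p)$ has no subgroup of index $p$ when $p>11$. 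Your sketch omits the Fricke/single-cusp step and the index computation, and replaces the Galois obstruction with an incorrect genus argument; those are the missing ingredients you would need to supply.
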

\begin{proof}
From Theorem \ref{thm:174}, there are no elements of order $p^2$, and if $g \in \Aut V^\natural$ has order $p$, then the McKay-Thompson series $T_g(\tau)$ is invariant under the Fricke involution $\tau \mapsto -1/\tau$.  Thus, the generalized moonshine functions $Z(g,h;\tau) = \sum \Tr(h|V^\natural(g)_n)q^{n-1}$, for $g$ of order $p$ and $h$ commuting with $g$, have expansions of the form $a q^{-1/p} + O(q^{1/p})$ for $a$ a root of unity, and are Hauptmoduln of some finite level $N$ \cite{C12}.  We shall assume that $g$ and $h$ generate an elementary abelian subgroup of order $p^2$, and derive a falsehood.

Because the McKay-Thompson series of all order $p$ automorphisms of $V^\natural$ have poles at zero, $Z(g,h;\tau)$ is singular at all cusps.  Let $G = \{ \left( \begin{smallmatrix} a & b \\ c & d \end{smallmatrix} \right) \in SL_2(\bR) | Z(g,h;\tau) = Z(g,h;\frac{a\tau+b}{c\tau+d}) \}$ denote the fixing group of $Z(g,h;\tau)$.  By the Hauptmodul property, the modular curve $X_{g,h} = \fH/G$ has a single cusp, and from the $q$-expansion, the single resulting cusp has width $p$.  By Theorem I of \cite{DLN12}, we have $N = p$, i.e., $G$ contains $\Gamma(p)$.   Thus, we have a genus zero group $G$ containing $\Gamma(p)$ such that the corresponding quotient map $X(p) \to X_{g,h}$ is unramified at all cusps.  By applying the orbit-stabilizer theorem to the action on cusps of $X(p)$, we find that $[G:\Gamma(p)] = p^2-1$, and $G$ is contained in $\Gamma(1)$ as an index $p$ subgroup.  Then, the image of $G$ in $\Gamma(1)/\Gamma(p)$ is an index $p$ subgroup of $SL_2(\bF_p)$, which (by a theorem of Galois) only exists when $p \leq 11$.  This contradicts our assumption on $p$.



\end{proof}

We therefore have sharp upper bounds on the valuation of $|\Aut V^\natural|$ for every prime except 11.  In the other direction, we can produce lower bounds on the size of $\Aut V^\natural$.  This is important, because in the paper \cite{O74} where he classified the supersingular primes, Ogg asked why the set of primes dividing the order of the monster coincided with the set of primes $p$ for which $X_0(p)^+$ is genus zero, offering a bottle of Jack Daniels for a satisfactory answer.  Borcherds's proof of the monstrous moonshine conjecture for $V^\natural$ gives half of the answer, and the other half requires an explanation for why the monster is so large.  We can produce, without appealing to much group theory, a lower bound on the order that is a factor of $11 \cdot 17 \cdot 19$ off the true size:

\begin{thm} \label{thm:order}
The order of $\Aut V^\natural$ is divisible by $2^{46}\cdot 3^{20} \cdot 5^9 \cdot 7^6 \cdot 11 \cdot 13^3 \cdot 23 \cdot 29 \cdot 31 \cdot 41 \cdot 47 \cdot 59 \cdot 71$.
\end{thm}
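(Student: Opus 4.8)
The plan is to assemble the stated divisor prime by prime, extracting the bulk of it from the involution centralizer $C = C_{\Aut V^\natural}(z) \cong 2^{1+24}.Co_1$ together with the sharp small-prime bounds of Theorem \ref{thm:valuation-for-small-primes}, and then to produce the six remaining primes separately. Since $C$ is a subgroup, Lagrange's theorem gives $|C| = 2^{25}\cdot |Co_1| = 2^{46}\cdot 3^9\cdot 5^4\cdot 7^2\cdot 11\cdot 13\cdot 23$, which therefore divides $|\Aut V^\natural|$, using Conway's value for $|Co_1|$. This already contributes the full power $2^{46}$ as well as the primes $11$ and $23$, each to the first power, which is all the statement claims for them, and it introduces $3,5,7,13$. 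Theorem \ref{thm:valuation-for-small-primes} then supplies the sharp powers $3^{20}$, $5^9$, $7^6$, and $13^3$ and re-confirms $2^{46}$. At this point the whole claimed divisor is accounted for except for the six primes $29,31,41,47,59,71$.

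The remaining task, and the genuine obstacle, is to show that each of $29,31,41,47,59,71$ divides $|\Aut V^\natural|$; by Cauchy's theorem this is equivalent to exhibiting an automorphism of each such order. The lattice-orbifold method behind Theorem \ref{thm:valuation-for-small-primes} is unavailable here: it produced an order-$p$ automorphism $\sigma_p^*$ only because $\Lambda$ carries a fixed-point-free automorphism of order $p$, which requires $p-1 \mid 24$ and in particular $p \mid |Co_0|$. None of the six primes divides $|Co_0|$ (nor $|M_{24}|$, nor the order of any of the small explicit subgroups coming from the triality), so no fixed-point-free Leech class of these orders exists. Consequently the associated McKay--Thompson series are forced to be of Fricke type rather than eta-products; and since every trace arising from a lattice automorphism is an eta-product coming from a Frame shape, no purely lattice-theoretic construction can reach these primes.

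The plan instead is to realize, for each such $p$, the normalized genus-zero Hauptmodul $t_p$ of $\Gamma_0(p)+$, satisfying $t_p = q^{-1}+O(q)$ and invariant under the Fricke involution, as the McKay--Thompson series $T_g$ of some $g \in \Aut V^\natural$; the replication order of $t_p$ then forces $p \mid |g|$ and hence $p \mid |\Aut V^\natural|$. By Theorem \ref{thm:174} the function $t_p$ is one of the $174$ completely replicable candidate traces, and by Theorem \ref{thm:monstrous-moonshine} every realized trace is a completely replicable Hauptmodul; what is needed is the converse realization statement, that these particular Fricke Hauptmoduln actually occur. I expect to obtain this from the generalized-moonshine description of $V^\natural$ established in \cite{C12} together with Norton's analysis of its consequences in \cite{N01}: the twisted modules $V^\natural(g)$ and the functions $Z(g,h;\tau)$ carry exactly the modular data that pins down the acting cyclic subgroups, and Norton's bookkeeping shows these six primes occur with valuation exactly $1$, consistently with the preceding ``$p \geq 17$ is cyclic of order $p$'' theorem. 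This complex-analytic realization is where essentially all the difficulty concentrates, and it is also the source of the slack in the bound: the same method yields neither the Fricke primes $17$ and $19$ nor the second factor of $11$, which is precisely why the stated divisor falls short of $|\Aut V^\natural|$ by exactly the factor $11\cdot 17\cdot 19$.
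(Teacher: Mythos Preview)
Your handling of the primes $2,3,5,7,11,13,23$ matches the paper exactly: Lagrange applied to $C$, plus Theorem~\ref{thm:valuation-for-small-primes}. The divergence is entirely in the six primes $29,31,41,47,59,71$, and here your proposal has a genuine gap.

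You aim to show that the Fricke Hauptmodul $t_p$ for $\Gamma_0(p)^+$ is realized as $T_g$ for some $g\in\Aut V^\natural$, and you appeal to \cite{C12} and \cite{N01} for this. But the generalized-moonshine machinery runs in the wrong direction for this purpose: the functions $Z(g,h;\tau)$ and the twisted modules $V^\natural(g)$ are defined only after one already has a commuting pair $(g,h)$ in $\Aut V^\natural$. They constrain what traces can look like once elements exist; they do not manufacture elements. Theorem~\ref{thm:174} likewise only bounds the set of \emph{possible} traces. So ``realize $t_p$ as a trace'' is precisely the statement ``there exists $g$ of order divisible by $p$'', and nothing in your cited inputs supplies such a $g$. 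Norton's analysis in \cite{N01} does not help either: as the paper itself notes, Norton \emph{assumes} the existence of the relevant conjugacy classes (via an unproved strong form of generalized moonshine) rather than deriving it.

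The paper's route avoids this circularity by using representation theory instead of modular functions. The weight~$2$ Virasoro-primary subspace of $V^\natural$ has dimension $196883 = 47\cdot 59\cdot 71$, and one checks (using the decomposition under $C$ together with triality) that $\Aut V^\natural$ acts irreducibly on it; since the dimension of an irreducible complex representation of a finite group divides the group order, this yields $47,59,71$. The primary subspaces in weights~$3$ and~$4$ are similarly irreducible, and their dimensions contribute $29,31,41$. This is elementary once irreducibility is in hand, and it explains transparently why exactly these six primes appear while $17$ and $19$ do not: the low-weight primary dimensions simply fail to be divisible by $17$ or $19$.
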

\begin{proof}
For the primes $2,3,5,7,13$, Theorem \ref{thm:valuation-for-small-primes} gives the precise valuation.  Since the subgroup $C$ has $Co_1$ as a quotient, we also have divisors $11$ and $23$.  Now, we consider the large primes:
\begin{enumerate}
\item The weight 2 space $V^\natural_2$ splits into the direct sum of a trivial representation spanned by the conformal vector $\omega$ and the 196883-dimensional space of Virasoro primary vectors, namely those annihilated by all $L_n$, $n>0$.  This primary space decomposes into irreducible $C$-modules as $299$ from $\Sym^2 \fh$, $98280$ from norm 4 vectors in $\Lambda$, and $24\cdot 2^{12}$ from the twisted module, and it is not hard to show using triality that it forms an irreducible representation of $\Aut V^\natural$.  The dimension of an irreducible representation of any finite group is a divisor of its order, so we find that the prime factors of $196883 = 47 \cdot 59 \cdot 71$ are therefore divisors of the order of $\Aut V^\natural$.
\item The primary subspaces of weight 3 and 4 are also irreducible, yielding additional prime factors $29, 31, 41$ from their dimensions.
\end{enumerate}
\end{proof}

We will consider the remaining factors of $11$, $17$ and $19$ in the ``open problems'' section.

\section{Open problems}

\begin{enumerate}
\item Are there alternative, non-triality vertex-algebraic methods to proving the finiteness and simplicity of $\Aut V^\natural$?  I have tried applying distinct cyclic orbifolds, e.g., of orders 2 and 3, and also replacing the 4-group with an elementary $p$-group for $p \in \{3,5,7,13\}$.  Finiteness seems to work out with the elementary groups, but I was unable to produce satisfactory arguments for simplicity.
\item Are there analogous vertex-algebraic methods to prove simplicity of other members of the ``happy family'' of sporadic groups that are subquotients of $\bM$?  For example, the Baby monster $\bB$ has an order 2 element with centralizer $2^{1+22}.Co_2$, where $Co_2$ is the stabilizer of a norm 4 vector in the automorphism group of $\Lambda$, suggesting the existence of a parallel construction.  Furthermore, there is a vertex operator algebra with $\bB$ symmetry, constructed in \cite{H95} as the commutant of an ``Ising model'' Virasoro vertex subalgebra of $V^\natural$ with central charge $1/2$.  There are other explicitly constructible vertex operator subalgebras with sporadic symmetry, e.g., Thompson symmetry from the commutant of a 3C pair of Ising vectors, and there are also twisted modules with natural sporadic symmetry, so there are multiple paths to proving simplicity and other properties.
\item Can we use the Conway moonshine vertex superalgebras $V^{f\natural}$ from \cite{D05} and $V^{s\natural}$ from \cite{DM14} to prove the simplicity of $Co_1$ and its subquotients?  Both $V^{f\natural}$ and $V^{s\natural}$ can be constructed as the vertex superalgebra attached to the odd unimodular lattice $D_{12}^+$, and have automorphism group $\Spin(24)$ as vertex superalgebras, but the stabilizers of the ``canonical'' $N=1$ superconformal vector are $Co_1$ and $Co_0$, respectively.
\item Can we extract the remaining factors $11,17,19$ in the order of the monster (i.e., those not seen in Theorem \ref{thm:order}) by vertex algebraic methods?  There are a few possible methods:
\begin{enumerate}
\item We can try dimensions of irreducible representations in $V^\natural$.  The main problem is that higher weight primary subspaces become ``very reducible'', and this doesn't seem to be much easier than computing the whole character table.  As an extreme example, the $16$th smallest irreducible representation has dimension $5^9 \cdot 7^6 \cdot 11^2 \cdot 17 \cdot 19$, and would solve all of our problems, but a short computation with the character table of the monster and complete replicability shows that this representation first appears in $V^\natural_{60}$, with multiplicity $4$ \cite{GAP}.  There are many other irreducible representations with useful divisibility properties that appear at lower weight, but none of them are particularly convenient.  We may be able to extract distinguished subspaces of primary spaces using the monster Lie algebra, but this idea is still undeveloped.
\item On the other hand, we may be able to find these factors in twisted modules.  For example, if $g$ is Fricke-invariant of order 7, then the weight $8/7$ subspace of $V^\natural(g)$ has dimension $3 \cdot 17$, and we would like to show that the centralizer of $g$ acts irreducibly.  If we could identify the centralizer of $g$ with $7 \times He$, then we could use the fact that the smallest faithful representation of the Held group has dimension 51 \cite{ATLAS}.
\item We can use triality.  For example, the centralizer of an order 11 element is $11 \times M_{12}$, and a vertex-algebraic way to detect this would give us a way to see the factor of $11^2$ in the order.  Here is what an attempt to generate $M_{12}$ using triality might look like: there is an order 11 automorphism of $V_{N(A_1^{24})}$ that commutes with $\rho(Q_8)$, and the centralizer of an element of order 11 in $C$ has the form $2^{1+4}_+.S_3 \times 11$ \cite{ATLAS}.  We therefore have 3 copies of the order $2^6\cdot 3$ group $2^{1+4}_+.S_3$ whose intersection is $2^2$, acting on the fixed-points of an order 11 element in $V^\natural$, and we need to show that they generate $M_{12}$ (which has order $2^6\cdot 3^3 \cdot 5 \cdot 11$).
\end{enumerate}
\item How much of the character table of the monster can be extracted from the information in section 5?  We have a set of 174 potential McKay-Thompson series as an upper bound on ``conjugacy classes modulo some equivalence relation'', but not much information about how large the equivalence classes can be.  We also have concrete inputs from transporting a large number of classes from the subgroup $C$, and we have pure existence of more classes attached to large primes.  We can also derive finer-grained information about tensor products and exterior powers from the multiplication structure of $V^\natural$ and the monster Lie algebra.  If we ignore the fact that we don't have the precise order, this appears to be much more information than Fischer, Livingstone, and Thorne had when they computed the full character table in 1978 \cite{T78}.  Thus, the answer to our literal question is likely to be ``all of it, given enough computational effort'', but one can also reasonably ask for something like a roadmap.
\end{enumerate}

\end{document}